\documentclass[12pt]{article}

\usepackage{amsmath,amssymb,amsthm,mathtools}
\usepackage{enumitem}
\usepackage{hyperref}

\usepackage{setspace}
\newtheorem{theorem}{Theorem}[section]
\newtheorem{lemma}[theorem]{Lemma}
\newtheorem{proposition}[theorem]{Proposition}
\newtheorem{corollary}[theorem]{Corollary}

\theoremstyle{definition}
\newtheorem{definition}[theorem]{Definition}
\newtheorem{remark}[theorem]{Remark}
\newtheorem{example}[theorem]{Example}

\newcommand{\R}{\mathbb{R}}
\newcommand{\N}{\mathbb{N}}
\newcommand{\E}{\mathbb{E}}

\newcommand{\pulo}{\vspace{0.2cm}}

\newcommand{\Fseven} {\fontsize{7}{11}\selectfont  }
\newcommand{\Feight} {\fontsize{8}{11}\selectfont  }

\newcommand{\Ften} {\fontsize{10}{11}\selectfont  }

\usepackage{fancyhdr}
\title{Distributional Statistical Models:\\ \large
Weak Moments, Cumulants, and a Central Limit Theorem}
\author{
R. Labouriau\\[0.3em]
\small Department of Mathematics, Aarhus University\\
\small \texttt{rodrigo.labouriau@math.au.dk} and
\small \texttt{rodrigo.labouriau@rlstatlab.com}
}
\date{Spring 2026}

\begin{document}
\maketitle
\setstretch{0.8}
\begin{abstract} \Fseven
Many important statistical models fall outside classical moment-based methods due to the non-existence of moments or moment generating functions. We propose a generalised probabilistic framework in which a probability law is represented by a tempered distribution $T \in \mathcal{S}'$, on the same footing as a density, a distribution function, or a characteristic function. By the classical structure theorem every such distribution is a finite sum of derivatives of ordinary functions, so singular laws (point masses, jumps) are admitted without pathology. Information about the law is extracted by evaluating $T$ on test functions regularised by a given positive Schwartz kernel $\varphi \in \mathcal{S}$---the kernel serving as a probe, not as part of the law. Expectations are defined via the action of distributions on these regularised test functions, yielding well-defined \emph{weak moments}, \emph{weak characteristic functions}, and \emph{weak cumulants} of all orders. These extend classical quantities and retain key algebraic properties such as additivity under independence and natural affine transformation rules.  Despite dispensing with classical densities, the framework recovers ordinary event probabilities: the weak expectation determines a kernel-weighted distribution function, and under the uniqueness conditions developed here, the classical distribution function is fully recovered.

The main results are: (i) a systematic algebra of weak cumulants; (ii) a \emph{weak moment problem} where existence of all moments holds unconditionally and uniqueness depends on the kernel, with uniqueness results under Gaussian kernels (via Hermite completeness, including a multivariate extension to~$\mathbb{R}^d$), positive Schwartz kernels with an exponential tail bound and square-integrable densities (via a Carleman-type criterion), and kernels with exponential decay (via Denjoy--Carleman quasi-analyticity); (iii) a \emph{weak central limit theorem} formulated as convergence of weak characteristic functions to a Gaussian limit, covering cases where the classical theorem fails, together with a quantitative refinement that is \emph{uniform over the kernel}---so that the Gaussian limit is independent of the probe used to observe the law; and (iv) a \emph{distributional CLT} showing that, in the density case, the kernel-weighted measures converge in distribution to the standard normal---thereby bridging weak transform convergence and classical convergence in distribution.

The framework is illustrated with Student's $t$, stable, and hyperbolic distributions. As a statistical consequence, the weak first moment yields a consistent estimator of the location parameter in the Cauchy model, where no classical moment-based estimator exists. A full statistical treatment is given in a companion paper ~\cite{LabouriauA2}.
\end{abstract}

\newpage
\Feight \tableofcontents
\newpage
\setstretch{1.0}
\normalsize

\section{Introduction}
\label{sec:introduction}

Classical statistical inference proceeds along two main routes, each resting on an assumption about the law that can fail.  \emph{Likelihood-based} inference assumes that the laws of a model are dominated by a common reference measure, so that densities---and hence a likelihood---exist; \emph{moment-} and \emph{transform-based} inference instead summarises a law through its moments, cumulants, and characteristic function~\cite{Billingsley1995,van2000asymptotic}.  The likelihood route fails, most fundamentally, when the laws share no common dominating measure---as when the support of $P_\theta$ depends on the parameter---so that no joint density, and hence no likelihood, exists~\cite{LabouriauTransversality}.  Even when a density exists, the likelihood may be inferentially defective: in singular models the Fisher information is degenerate and the Fisher--Rao differential geometry built on it breaks down~\cite{Watanabe2009}, while in certain finite mixtures the score function is biased as an estimating equation, rendering the maximum-likelihood estimator inconsistent~\cite{LabouriauMixture}; a density that merely lacks a closed form, as for general $\alpha$-stable laws, is a milder difficulty.  The moment route fails, independently, for heavy-tailed laws---the Cauchy distribution, low-degree Student~$t$ distributions, and $\alpha$-stable laws---whose moments above a finite order do not exist, so that moment and cumulant methods are unavailable and the characteristic function is non-differentiable at the origin~\cite{Feller1971,Samorodnitsky1994}.  The present paper, and its companion on estimation~\cite{LabouriauA2}, work on this moment--transform side; the more serious likelihood degeneracy is taken up, on the same representational basis, in the companion papers~\cite{LabouriauPaperD,LabouriauTransversality}.

We propose a framework in which a probability law is represented by a tempered distribution $T\in\mathcal{S}'$---on the same footing as a density, a cumulative distribution function, or a characteristic function---and is \emph{probed} through its pairing with a given Schwartz kernel $\varphi\in\mathcal{S}$.  Expectations are defined via this pairing, $\E_{T,\varphi}[\psi]=\langle T,\psi\varphi\rangle$; taking $\psi(x)=x^n$ gives the $n$-th \emph{weak moment} $\langle T,x^n\varphi\rangle$, and the closure of $\mathcal{S}$ under multiplication by polynomials immediately shows that weak moments of all orders exist for every pair $(T,\varphi)$.  When the ordinary moments of the underlying distribution exist, they coincide with the corresponding weak moments up to the kernel factor and are recovered from them in the limit $\varphi\to1$ (made precise in Lemma~\ref{lem:phi-to-1}); when they do not (as in the Cauchy case), the weak moments are still well defined.

It is essential to separate the object being modelled from the instrument used to examine it.  In the distributional framework, a statistical model is a family $\{T_\theta:\theta\in\Theta\}\subset\mathcal{S}'$ of laws; the kernel $\varphi$ is \emph{not} part of the law but a probe through which the law is observed---exactly as a law is observed through its moments (probing with the monomials $x^n$), through its characteristic function (probing with the exponentials $e^{itx}$), or through its distribution function (probing with indicators of half-lines).  The weak moments $\langle T_\theta,x^n\varphi\rangle$ and the weak characteristic function $\langle T_\theta,e^{itx}\varphi\rangle$ are accordingly \emph{instrument readings}: they are properties of the pair $(T,\varphi)$ by design, just as ``the third moment'' is a property of a law together with the decision to look at it through $x^3$.  This places the construction at the end of a long lineage---coefficients (de~Moivre), generating functions (Euler, Laplace), moments (Chebyshev, Markov, Stieltjes), characteristic functions (L\'evy, Cram\'er)---in which a law is known through what it does to a family of test objects, the probe $x^n\varphi$ being the most flexible member to date~\cite{LabouriauDeMoivre}.  Letting $T$ be a general tempered distribution is, moreover, no step into pathology: by the structure theorem these are finitely-differentiated ordinary functions---point masses, jumps, and the like---so singular laws sit on a continuum with smooth ones rather than apart from them (Remark~\ref{rem:structure}).

This separation also answers a natural objection.  In the density case the kernel-weighted object $\varphi f$ is itself a finite measure and the weak moments are its ordinary moments, so one might ask whether the framework is merely the moment theory of a mollified density.  It is not, for three reasons.  First, the probe leaves the law untouched: it multiplies $T$ by $\varphi$ rather than convolving (Remark~\ref{rem:mollification}), so $T$---not a smoothed surrogate---remains the object of study.  Second, and decisively, the probe is \emph{faithful}: the uniqueness theory of Section~\ref{sec:moment_problem} shows that for a positive, sufficiently decaying kernel the family of readings $\{\langle T,x^n\varphi\rangle\}_{n\ge0}$ determines $T$ uniquely, so no information about the law is lost in viewing it through $\varphi$.  Third, the construction is defined verbatim when $T$ has \emph{no density at all}, where ``mollifying a density'' has no meaning---and it is there, rather than in the density case, that the novelty resides.

\paragraph{A motivating example.}
The symmetric $\alpha$-stable laws ($0<\alpha\le2$) are the prototype the framework is built for.  They are specified \emph{operationally}, through the characteristic function $e^{-c|t|^\alpha}$, and---apart from the Gaussian ($\alpha=2$), the Cauchy ($\alpha=1$), and the L\'evy ($\alpha=\tfrac12$) cases---admit no density in closed form; moreover their classical absolute moments of order $\ge\alpha$ diverge, so there is no variance for any $\alpha<2$ and no mean once $\alpha\le1$.  Here there is no density to mollify and no convergent moment sequence to invert; yet the law is a bona fide tempered distribution $T$, and every weak moment $\langle T,x^n\varphi\rangle=\int x^n\varphi\,d\mu$ is finite because $x^n\varphi\in\mathcal{S}(\R)$.  Through a Gaussian kernel the weak characteristic function is the convolution of the stable characteristic function with a Gaussian---a generalised Voigt profile---which for $\alpha=1$ reduces to the Faddeeva/$\operatorname{erfc}$ form computed in the companion paper~\cite{LabouriauA2}, and the weak moments are read from its derivatives at the origin.  The Cauchy case ($\alpha=1$), worked in closed form in Section~\ref{subsec:examples_t}, is the computational specialisation; the general stable case is the one that cannot be reduced to the moments of a mollified density.  A complementary example sits on the likelihood side rather than the moment side---the non-dominated family $\tfrac12\delta_\theta+\tfrac12 N(0,1)$, for which no likelihood exists yet the weak representation applies unchanged (Remark~\ref{rem:moving-atom}).

Giving up the density might seem to come at a price: does one not also give up probabilistic content---event probabilities, quantiles, tail probabilities?  The crucial point is that \emph{it does not}.  In Section~\ref{subsec:preliminaries_spaces} we show that the weak expectation determines a kernel-weighted distribution function by approximating indicator functions with Schwartz functions (Proposition~\ref{prop:weighted-cdf-recovery}), and that the ordinary distribution function is recovered whenever the underlying density is identifiable from the weak representation (Corollary~\ref{cor:cdf-recovery}).  The weak framework thus supports the same kinds of probabilistic statements---event probabilities, quantiles, confidence intervals---and even allows statistical inference, just as classical density-based methods do, while extending seamlessly to models where no classical moments exist or densities are not available.  Moreover, in the classical density setting, the framework admits an explicit reconstruction interpretation: weak data determine a regularised version of the density, from which the original density can be recovered by stable inverse methods (see Appendix~\ref{app:reconstruction}).

\pulo

 \noindent
The paper develops five blocks of results:
\begin{enumerate}[label=(\roman*)]
\item \emph{Algebra of weak cumulants} (Sections~\ref{sec:independence}--\ref{sec:cumulants}): we show that weak characteristic functions factorise under independence, that weak cumulants are additive, and that affine transformation rules carry over from the classical theory.
\item \emph{The weak moment problem} (Section~\ref{sec:moment_problem}): existence of weak moments of all orders is automatic; uniqueness depends on the kernel.  We establish three levels of uniqueness: (a)~for any positive Schwartz kernel with an exponential tail bound, among square-integrable densities via Carleman's condition; (b)~for kernels with exponential-type decay, in all of $\mathcal{S}'(\R)$ via the Denjoy--Carleman theorem; (c)~for Gaussian kernels, via the completeness of Hermite functions in $\mathcal{S}(\R)$.  The Gaussian uniqueness theorem is extended to~$\R^d$ using the multivariate Hermite basis.
\item \emph{Weak central limit theorem} (Section~\ref{sec:clt}): we prove convergence of the weak characteristic function of normalised sums to a Gaussian limit, together with a quantitative (Berry--Esseen type) refinement that is moreover \emph{uniform over the kernel}, so that the Gaussian limit is independent of the probe (instrument-invariance).
\item \emph{Distributional CLT} (Section~\ref{subsec:clt_distributional}): in the density case, the weak CLT is shown to imply classical convergence in distribution of the kernel-weighted measures to the standard normal.  This bridges the weak framework and classical probability, and connects to the Tikhonov reconstruction of Appendix~\ref{app:reconstruction}.
\item \emph{A statistical consequence} (Section~\ref{sec:motivating-consequence}): a short illustration showing that even a single weak moment yields a consistent estimator for the location parameter of a Cauchy distribution, where no classical moment-based estimator exists.
\end{enumerate}

Classical moment theory provides a rich body of determinacy and
indeterminacy results---a measure possessing moments of all orders may
or may not be uniquely determined by them.  The theory of
moment-determinacy (M-determinacy) includes Carleman's sufficient
condition for uniqueness, Krein's criterion for
indeterminacy~\cite{Stoyanov2000Krein}\footnote{The criterion is due to
M.~G.~Krein; as the original work is difficult to obtain, we cite throughout
the accessible treatment of Stoyanov~\cite{Stoyanov2000Krein}.}, and the existence of continuous and
discrete Stieltjes classes of distributions sharing the same ordinary
moments.  Canonical examples include the lognormal distribution and
related heavy-tailed laws; see Berg~\cite{Berg1988},
Stoyanov~\cite{Stoyanov2013}, and the geometric
perspective in~\cite{StoyanovInverardiTagliani2023}.  The present work
does not replace the classical moment problem, but rather introduces a
kernel-weighted moment calculus in which the separating family is
changed from the bare monomials $x^n$ to the weighted functions
$x^n\varphi(x)$.  The relation between weak uniqueness and classical
M-determinacy is discussed in
Section~\ref{subsec:classical-moment-problem}.

Section~\ref{sec:preliminaries} introduces the framework, basic definitions, and the recovery of event probabilities from weak expectations.  Section~\ref{sec:independence} develops independence and additivity.  Section~\ref{sec:cumulants} studies weak transforms and cumulants.  Section~\ref{sec:examples} presents illustrative examples.  Section~\ref{sec:moment_problem} is the centrepiece: the weak moment problem and its uniqueness theorems.  Section~\ref{sec:multivariate} extends the framework to~$\R^d$.  Section~\ref{sec:clt} establishes the weak CLT, its quantitative refinement (including a kernel-uniform version), and the distributional CLT for kernel-weighted measures.  Section~\ref{sec:motivating-consequence} gives the statistical illustration.  Section~\ref{sec:discussion} concludes.  Appendix~\ref{app:leibniz} presents a technical estimate used in the proof of Theorem~\ref{thm:dc-unique}. Additional proofs omitted from the main text for brevity are collected in Appendix~\ref{app:proofs}.  Appendix~\ref{app:reconstruction} develops Tikhonov reconstruction from weak data.

\paragraph{Notation.}
Throughout the paper, weak objects carry a superscript $\varphi$ denoting the kernel in play: ${}^{(\varphi)}m_n$ for weak moments, ${}^{(\varphi)}\phi(t)$ for the weak characteristic function, ${}^{(\varphi)}K(t)$ for the weak cumulant generating function, and ${}^{(\varphi)}\kappa_n$ for weak cumulants.  Classical objects (when they exist) carry no superscript.  

\section{Preliminaries and generalised probability framework}
\label{sec:preliminaries}

\subsection{Distribution--kernel pairs and weak expectation}
\label{subsec:preliminaries_spaces}

Let $\mathcal{S}(\R^d)$ denote the Schwartz space of rapidly decreasing smooth functions, and $\mathcal{S}'(\R^d)$ its dual, the space of tempered distributions.  We work primarily in the tempered setting, since it is stable under Fourier transform and accommodates the weak characteristic functions introduced below.  The closure of $\mathcal{S}(\R^d)$ under multiplication by polynomials---if $\varphi\in\mathcal{S}(\R^d)$ and $p$ is a polynomial, then $p\varphi\in\mathcal{S}(\R^d)$---underlies the existence of weak moments of all orders.

\begin{definition}[Distribution--kernel pair]
A \emph{distribution--kernel pair} is a tuple $(T,\varphi)$ with $T \in \mathcal{S}'(\R^d)$ and $\varphi \in \mathcal{S}(\R^d)$.  The tempered distribution $T$ \emph{represents the law}---the object of inference, in the role a density would otherwise play---while the kernel $\varphi$ is the \emph{probe} through which the law is examined: it regularises the pairing but is not itself part of the model.
\end{definition}

\begin{remark}[Singularity as differentiated regularity]\label{rem:structure}
Admitting a general $T\in\mathcal{S}'(\R^d)$ does not admit arbitrarily
pathological objects.  By the classical structure theorem, every tempered
distribution is a finite sum of derivatives of continuous functions of at most
polynomial growth, $T=\sum_{|\alpha|\le m}D^\alpha f_\alpha$
(Strichartz~\cite{Strichartz2003}, \S6.3; H\"ormander~\cite{Hormander1990}).
Singular probabilistic features---point masses, jumps, cusps---are accordingly
\emph{differentiated regularity}: they arise by differentiating ordinary
functions in the weak sense (a point mass is the derivative of a step), and the
kernel $\varphi$ converts them back into stable scalar quantities.  Heavy tails
are a separate matter: there $T$ is an ordinary function of slow decay, and it
is the rapid decay of $\varphi$---not the structure theorem---that makes
$x^n\varphi\in\mathcal{S}(\R^d)$ and renders the weak moments finite.  The
single pairing thus accommodates both departures from classical
regularity---singular $T$ and heavy tails---for different reasons.
\end{remark}

\begin{remark}\label{rem:univariate-focus}
The definitions are stated for $\R^d$ to establish generality, but the detailed development is carried out for $d=1$.  The multivariate extension is straightforward for the basic constructions; a systematic treatment is left for future work.
\end{remark}

Given $(T,\varphi)$, define the \emph{class of admissible functions} by $\mathcal{A}_\varphi := \{ \psi : \psi\varphi \in \mathcal{S}(\R^d) \}$.  All polynomials and all functions $x\mapsto e^{itx}$ belong to $\mathcal{A}_\varphi$.

\begin{definition}[Generalised expectation]
For $(T,\varphi)$ and $\psi\in\mathcal{A}_\varphi$, the \emph{weak expectation} is
\[
\E_{T,\varphi}[\psi] \;:=\; \langle T,\,\psi\varphi\rangle.
\]
\end{definition}

\begin{definition}[Generalised probability measure]
A pair $(T,\varphi)$ defines a \emph{generalised probability measure} when the probe $\varphi$ exhibits $T$ as a law: $\langle T,\varphi\rangle = 1$ (normalisation) and $\langle T,\psi\varphi\rangle \geq 0$ for all non-negative $\psi\in\mathcal{A}_\varphi$ (positivity).
\end{definition}

\begin{proposition}[Positivity implies a genuine measure]\label{prop:positivity-measure}
Let $(T,\varphi)$ be a generalised probability measure with
$\varphi(x)>0$ for all~$x$.  Then the distributional product
$T\varphi\in\mathcal{S}'(\R)$ is a positive Radon measure: there
exists a unique positive Borel measure~$\mu$ on~$\R$ with
$\langle T,\psi\varphi\rangle = \int\psi\,d\mu$ for all
$\psi\in\mathcal{A}_\varphi$.
\end{proposition}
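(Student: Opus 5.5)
The plan is to show that the distributional product $S:=T\varphi$, defined by $\langle S,\chi\rangle=\langle T,\chi\varphi\rangle$ for $\chi\in\mathcal{S}(\R)$, is a positive distribution. We then invoke the classical fact that positive distributions are positive Radon measures (the Riesz--Schwartz theorem), and finally extend the representation from $\mathcal{S}$ to all of $\mathcal{A}_\varphi$.

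\emph{Step 1: positivity on $C_c^\infty$.} Let $\chi\in C_c^\infty(\R)$ with $\chi\ge0$. Set $\psi:=\chi/\varphi$. Because $\varphi>0$ is smooth and nowhere zero, $1/\varphi$ is smooth. Hence $\psi$ is smooth, compactly supported and non-negative, and $\psi\varphi=\chi\in\mathcal{S}$, so $\psi\in\mathcal{A}_\varphi$. Positivity of the pair then gives $\langle S,\chi\rangle=\langle T,\psi\varphi\rangle\ge0$. Thus $S$, restricted to $C_c^\infty$, is a positive distribution. By the standard argument (positivity yields the bound $|\langle S,\chi\rangle|\le C_K\sup|\chi|$ for $\operatorname{supp}\chi\subset K$, followed by the Riesz representation theorem), there is a unique positive Radon measure $\mu$ with $\langle S,\chi\rangle=\int\chi\,d\mu$ for all $\chi\in C_c^\infty$.

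\emph{Step 2: finiteness and extension to $\mathcal{S}$.} This is the step I expect to be the main obstacle. Take cutoffs $0\le\eta_k\le1$ in $C_c^\infty$ with $\eta_k\uparrow1$. Each $\eta_k/\varphi$ is a non-negative admissible function, and $1-\eta_k\ge0$ is admissible as well. Positivity of $(T,(1-\eta_k))$ gives $\int\eta_k\,d\mu=\langle T,\eta_k\varphi\rangle\le\langle T,\varphi\rangle=1$. By monotone convergence $\mu(\R)\le1$, so $\mu$ is finite. Hence $\chi\mapsto\int\chi\,d\mu$ is continuous on $\mathcal{S}$, since $|\int\chi\,d\mu|\le\mu(\R)\sup|\chi|$. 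The distribution $S$ is also continuous on $\mathcal{S}$, because $\chi\mapsto\chi\varphi$ is continuous on $\mathcal{S}$. The two functionals agree on the dense subspace $C_c^\infty\subset\mathcal{S}$, so they coincide on $\mathcal{S}$. The care needed here is in using the $\mathcal{S}$-topology density rather than mere pointwise convergence. Applying this to $\chi=\varphi$ (with $\psi=1$) then gives $\mu(\R)=1$.

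\emph{Step 3: conclusion and uniqueness.} For $\psi\in\mathcal{A}_\varphi$ we have $\chi:=\psi\varphi\in\mathcal{S}$, so $\langle T,\psi\varphi\rangle=\int\psi\varphi\,d\mu$. This shows the statement should be read with the measure $\varphi\,d\mu$ or with $\mu$ representing $T\varphi$; I would fix the convention by writing the representing measure as $\mu$ with $\langle T,\chi\rangle$-pairing of $S$. Concretely, $d\nu:=d\mu/\varphi$, which is locally finite because $1/\varphi$ is continuous, satisfies $\int\psi\varphi\,d\nu=\int\psi\,d\mu$. I would state the final identity in the form matching the proposition, $\langle T,\psi\varphi\rangle=\int\psi\,d\mu$, with $\mu=T\varphi$ as a measure.

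Uniqueness follows because two finite positive measures that agree on $C_c^\infty$ agree as Borel measures, again by Riesz uniqueness. Along the way I would check that the test functions used ($\chi/\varphi$ and $1-\eta_k$) genuinely lie in $\mathcal{A}_\varphi$. This holds since $\chi$ and $(1-\eta_k)\varphi$ are Schwartz.
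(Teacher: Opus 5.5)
Your Steps 1--2 are sound up to a slip, and they already go further than the paper's proof. That proof stops at positivity of $T\varphi$ on $C_c^\infty$ plus the Schwartz theorem, and leaves the passage to $\mathcal{A}_\varphi$ implicit. The slip: with $\psi=\chi/\varphi$ you get $\langle T,\psi\varphi\rangle=\langle T,\chi\rangle$, not $\langle S,\chi\rangle=\langle T,\chi\varphi\rangle$; for positivity of $S$ take $\psi=\chi$, as the paper does. What you have actually proved is $\langle T,\chi\varphi\rangle=\int\chi\,d\mu$ for $\chi\in\mathcal{S}(\R)$, with $\mu(\R)\le 1$.

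\textbf{The gap} is the passage from $\mathcal{S}(\R)$ to $\mathcal{A}_\varphi$, which is the real content of the proposition. The class $\mathcal{A}_\varphi$ contains $1$, $x^n$, $e^{itx}$, and, for $\varphi(x)=e^{-x^2/2}$, even $e^{x^2/4}$. Both deductions you offer there are off by a factor of $\varphi$. Taking $\chi=\varphi$ yields $\langle T,\varphi^2\rangle=\int\varphi\,d\mu$, not $\mu(\R)=\langle T,\varphi\rangle=1$. Substituting $\chi=\psi\varphi$ yields $\langle T,\psi\varphi^2\rangle=\int\psi\varphi\,d\mu$, not $\langle T,\psi\varphi\rangle=\int\psi\varphi\,d\mu$. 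Fixing a convention or introducing $\nu=\mu/\varphi$ does not replace a limit argument. Finiteness of $\mu$ also says nothing about $\int\psi\,d\mu$ for unbounded $\psi$.

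\textbf{The missing idea} is already hidden in your Step 1 computation. The map $\psi\mapsto\psi\varphi$ sends the nonnegative elements of $\mathcal{A}_\varphi$ onto the nonnegative elements of $\mathcal{S}(\R)$, so the positivity axiom says exactly that $T$ itself is a positive tempered distribution. So represent $T$ (not $T\varphi$) on $C_c^\infty$ by a positive Radon measure $\nu$, which is your $\mu/\varphi$. Then show that $\nu$ represents $T$ on all of $\mathcal{S}(\R)$:
\begin{enumerate}
\item Take $\eta_k=\eta(\cdot/k)$ and $M$ large compared with the order in the continuity estimate of $T$. The functions $\eta_k(1+x^2)^{-M}$ have $\mathcal{S}$-seminorms bounded uniformly in $k$, so that estimate and Fatou's lemma give $\int(1+x^2)^{-M}\,d\nu<\infty$.
\item Since $\eta_k\chi\to\chi$ in $\mathcal{S}(\R)$, dominated convergence gives $\langle T,\chi\rangle=\int\chi\,d\nu$ for every $\chi\in\mathcal{S}(\R)$.
\item Set $\mu:=\varphi\nu$, which coincides with your $\mu$. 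For $\psi\in\mathcal{A}_\varphi$ you get $\langle T,\psi\varphi\rangle=\int\psi\varphi\,d\nu=\int\psi\,d\mu$, with $\psi\in L^1(\mu)$ automatically.
\item Taking $\psi=1$ gives $\mu(\R)=1$, and uniqueness follows by testing against $C_c^\infty$.
\end{enumerate}
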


\begin{proof}
Since $\varphi>0$ everywhere, every non-negative $\psi\in C_c^\infty(\R)$
satisfies $\psi\varphi\in C_c^\infty(\R)\subset\mathcal{S}(\R)$, so
$\psi\in\mathcal{A}_\varphi$.  The positivity condition therefore gives
$\langle T,\psi\varphi\rangle\ge 0$ for all non-negative
$\psi\in C_c^\infty(\R)$.  In other words, the distribution
$T\varphi\in\mathcal{D}'(\R)$ is positive.  By the Schwartz theorem on
positive distributions~\cite[Thm.~2.1.7]{Hormander1990}, $T\varphi$ is a
positive Radon measure~$\mu$.  Uniqueness of~$\mu$ follows from the
density of $C_c^\infty(\R)$ in $C_0(\R)$.
\end{proof}

\begin{remark}\label{rem:positivity-strong}
For pairs $(T_f,\varphi)$ with $f\ge 0$ a.e.\ and $\varphi>0$, the
positivity condition is automatic and $\mu = f\varphi\,dx$.  The
technical results of this paper---weak moments, characteristic
functions, additivity, the CLT, and the uniqueness theorems---depend
only on the normalisation condition and the structure of the pair,
not on positivity.  Positivity is thus a modelling axiom that
restricts attention to pairs with a probabilistic interpretation.
Beyond $\varphi>0$, the only structural hypothesis is $T\in\mathcal{S}'(\R)$:
no smoothness, decay, or density of $T$ is assumed.
\end{remark}

This point of view leads naturally to a generalised notion of random variable.

\begin{definition}[Generalised random variable]
A generalised random variable is an abstract object \(X\) whose law is the
tempered distribution \(T \in \mathcal{S}'(\mathbb{R})\), examined through a
kernel \(\varphi \in \mathcal{S}(\mathbb{R})\), and whose expectations are the
corresponding probe-readings
\[
E[X;\psi] := E_{T,\varphi}[\psi] = \langle T, \psi \varphi \rangle,
\]
for all admissible test functions \(\psi\).
\end{definition}

\begin{remark}
In this formulation, a generalised random variable is not defined as a
measurable function on an underlying probability space, but rather through
its expectation functional---a purely functional-analytic object. In this
respect it is a scalar, regularised instance of the generalised random
variables of Gel'fand and Vilenkin~\cite{Gelfand1964}, which are likewise
specified by their action on test functions rather than as pointwise maps on
a sample space. This extends the classical notion of random
variable while retaining its probabilistic content. In the density case,
\(T = T_f\), this reduces to
\[
E[X;\psi] = \int \psi(x)\varphi(x) f(x)\,dx,
\]
which coincides with classical expectation in the limit \(\varphi \to 1\).
\end{remark}

For a classical density $f\in L^1(\R^d)$ with associated distribution $T_f$, the weak expectation reduces to
\[
\E_{T_f,\varphi}[\psi] = \int_{\R^d} f(x)\psi(x)\varphi(x)\,dx,
\]
which recovers the ordinary expectation in the limit $\varphi\to 1$, made
precise as follows.

\begin{lemma}[Recovery in the limit $\varphi\to1$]\label{lem:phi-to-1}
Let $f\in L^1(\R)$ and let $\psi$ be measurable with $\psi f\in L^1(\R)$.
Fix $\rho\in\mathcal{S}(\R)$ with $0\le\rho\le1$ and $\rho(0)=1$ (for
instance $\rho(x)=e^{-x^2/2}$), and set $\varphi_\varepsilon(x):=\rho(\varepsilon x)$
for $\varepsilon>0$.  Then $\varphi_\varepsilon\in\mathcal{S}(\R)$,
$\varphi_\varepsilon>0$, $\varphi_\varepsilon\to1$ pointwise as
$\varepsilon\downarrow0$, and
\[
\E_{T_f,\varphi_\varepsilon}[\psi]
=\int_\R \psi(x)\varphi_\varepsilon(x)f(x)\,dx
\;\xrightarrow[\varepsilon\downarrow0]{}\;
\int_\R\psi(x)f(x)\,dx .
\]
In particular, $\psi(x)=x^n$ recovers the ordinary $n$-th moment whenever it
exists, and $\psi(x)=e^{itx}$ recovers the classical characteristic function
for every $t\in\R$.
\end{lemma}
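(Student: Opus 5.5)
The plan is to verify the three elementary properties of $\varphi_\varepsilon$ and then obtain the limit by dominated convergence, with $|\psi f|$ as the dominating function.

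\emph{Properties of $\varphi_\varepsilon$.} Membership in $\mathcal{S}(\R)$ is a scaling argument. By the chain rule,
\[
x^k\varphi_\varepsilon^{(j)}(x)=\varepsilon^{\,j-k}\,(\varepsilon x)^k\rho^{(j)}(\varepsilon x),
\]
so every Schwartz seminorm of $\varphi_\varepsilon$ is bounded by $\varepsilon^{\,j-k}$ times the corresponding seminorm of $\rho$. Pointwise convergence follows from continuity of $\rho$ at the origin: $\varphi_\varepsilon(x)=\rho(\varepsilon x)\to\rho(0)=1$ as $\varepsilon\downarrow0$, for each fixed $x$.

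Strict positivity does not follow from the stated hypotheses $0\le\rho\le1$, since such a $\rho$ may vanish at some points. I will therefore add the assumption $\rho>0$, which holds for the suggested Gaussian $\rho(x)=e^{-x^2/2}$ and gives $\varphi_\varepsilon>0$ immediately. I expect this to be the only real obstacle, and it is a matter of stating the hypothesis correctly rather than of proof technique.

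\emph{The limit.} For the identity $\E_{T_f,\varphi_\varepsilon}[\psi]=\int\psi\varphi_\varepsilon f\,dx$, I will use the reduction formula for densities stated just before the lemma. It applies whenever $\psi\in\mathcal{A}_{\varphi_\varepsilon}$, which covers the two cases of interest: by the same scaling argument, $x^n\varphi_\varepsilon$ and $e^{itx}\varphi_\varepsilon$ lie in $\mathcal{S}(\R)$. For a general measurable $\psi$, the integral is simply taken as the definition of the reading. The integrand converges pointwise to $\psi f$ because $\varphi_\varepsilon\to1$. Since $0\le\rho\le1$, it is dominated by
\[
|\psi\varphi_\varepsilon f|\le|\psi f|\in L^1(\R).
\]
Dominated convergence along any sequence $\varepsilon_k\downarrow0$ then gives the stated limit, and hence the limit as $\varepsilon\downarrow0$.

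\emph{Special cases.} Taking $\psi(x)=x^n$ requires $x^nf\in L^1$, which is exactly existence of the $n$-th moment, so the ordinary moment is recovered. Taking $\psi(x)=e^{itx}$, we have $|e^{itx}f|=|f|\in L^1$ for every $t$, so the limit is the classical characteristic function $\int e^{itx}f(x)\,dx$.
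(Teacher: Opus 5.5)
Your proposal is correct and follows the paper's own argument: pointwise convergence $\rho(\varepsilon x)\to\rho(0)=1$, then dominated convergence with dominating function $|\psi f|$ (from $0\le\rho\le1$), and the characteristic-function case is unconditional because the dominating function there is $|f|$. Your remark that $\varphi_\varepsilon>0$ does not follow from $0\le\rho\le1$ is also right (e.g.\ $\rho(x)=e^{-x^2/2}\cos^2 x$ vanishes at $x=\pi/2$); the paper's proof passes over this point, and adding $\rho>0$, as holds for the Gaussian, is the correct repair, though the limit argument itself does not need it.
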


\begin{proof}
For each $\varepsilon>0$, $\rho(\varepsilon\,\cdot)\in\mathcal{S}(\R)$ and
$|\psi\,\varphi_\varepsilon f|\le\|\rho\|_\infty\,|\psi f|\le|\psi f|\in
L^1(\R)$; since $\varphi_\varepsilon(x)\to\rho(0)=1$ for every $x$, dominated
convergence gives the stated limit.  For $\psi(x)=e^{itx}$ the dominating
function $|f|$ is integrable for all $t$, so the recovery is unconditional.
\end{proof}

\begin{remark}\label{rem:mollification}
The construction $\E_{T,\varphi}[\psi] = \langle T,\psi\varphi\rangle$ is related to, but distinct from, classical mollification $T*\rho_\varepsilon$.  Mollification acts by \emph{convolution} (global averaging), whereas the present framework acts by \emph{pointwise multiplication} by the kernel~$\varphi$, localising information to the region where $\varphi$ is appreciably nonzero.  This distinction becomes important in the reconstruction problem, where the forward operator is a multiplication rather than a convolution.
\end{remark}

\pulo

A natural question arises at this point: if expectations are defined via the distributional pairing $\langle T,\psi\varphi\rangle$ rather than via integration against a density, can one still recover ordinary event probabilities?  The answer is affirmative: the weak expectation determines a kernel-weighted distribution function, and under the uniqueness conditions of Section~\ref{sec:moment_problem}, the classical distribution function is fully recovered.

\begin{proposition}[Recovery of weighted distribution functions]
\label{prop:weighted-cdf-recovery}
Let $f\in L^1(\R)$ be a nonnegative density, let $T_f$ be the induced
tempered distribution, and let $\varphi\in\mathcal{S}(\R)$ with
$\varphi\ge 0$. For each $a\in\R$, define
\[
F_{\varphi}(a):=\int_{-\infty}^{a} f(x)\varphi(x)\,dx .
\]
Then $F_{\varphi}(a)$ is determined by the weak expectation
$\E_{T_f,\varphi}$ in the following sense: if $(\psi_n)_{n\ge1}\subset
\mathcal{S}(\R)$ satisfies
\[
0\le \psi_n\le 1,\qquad \psi_n(x)\to \mathbf{1}_{(-\infty,a)}(x)
\quad\text{pointwise},
\]
then
\[
\E_{T_f,\varphi}[\psi_n]\longrightarrow F_{\varphi}(a).
\]
In particular, the weak expectation determines the kernel-weighted cumulative
distribution function $F_{\varphi}$.
\end{proposition}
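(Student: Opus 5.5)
The argument is a direct application of dominated convergence, once the weak expectation is written as an ordinary integral. Since $f\in L^1(\R)$ is a nonnegative density, $T_f$ acts by integration. For each $n$ we have $\psi_n\in\mathcal{S}(\R)$, so $\psi_n\varphi\in\mathcal{S}(\R)$ and $\psi_n\in\mathcal{A}_\varphi$. Hence
\[
\E_{T_f,\varphi}[\psi_n]=\langle T_f,\psi_n\varphi\rangle=\int_\R f(x)\psi_n(x)\varphi(x)\,dx ,
\]
exactly as in the density-case formula preceding Lemma~\ref{lem:phi-to-1}. Likewise $F_\varphi(a)=\int_\R f(x)\mathbf{1}_{(-\infty,a)}(x)\varphi(x)\,dx$. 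Here it does not matter whether the endpoint $a$ is included, because the singleton $\{a\}$ is Lebesgue-null and $f\varphi\,dx$ is absolutely continuous.

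The convergence then follows in three short steps.

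\begin{enumerate}[label=(\alph*)]
\item The integrands $f\psi_n\varphi$ converge pointwise to $f\mathbf{1}_{(-\infty,a)}\varphi$. This holds at every $x$ where $f(x)$ is defined, by the assumed pointwise convergence of $\psi_n$.
\item Using $0\le\psi_n\le1$ and $\varphi\ge0$, we get the uniform domination $0\le f\psi_n\varphi\le \|\varphi\|_\infty f$.
\item The dominating function is integrable, since $\|\varphi\|_\infty<\infty$ for a Schwartz function and $f\in L^1(\R)$.
\end{enumerate}

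Dominated convergence then yields $\E_{T_f,\varphi}[\psi_n]\to F_\varphi(a)$. This is the same mechanism as in the proof of Lemma~\ref{lem:phi-to-1}, with the roles of kernel and test function interchanged.

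For the ``in particular'' statement, the limit does not depend on the approximating sequence chosen. It is therefore a functional of the map $\psi\mapsto\E_{T_f,\varphi}[\psi]$ restricted to $\mathcal{S}(\R)$, and so $F_\varphi(a)$ is determined by the weak expectation. To show the hypothesis is never vacuous, I would exhibit one admissible sequence for each $a$. I would take $\psi_n(x)=\chi(n(a-x))$, where $\chi\in C^\infty(\R)$ is a fixed smooth step function with $\chi=0$ on $(-\infty,0]$, $\chi=1$ on $[1,\infty)$ and $0\le\chi\le1$. This $\psi_n$ is not Schwartz, since it equals $1$ near $-\infty$. To fix this I would multiply by a cutoff $\theta(x/n)$, where $\theta\in C_c^\infty(\R)$, $0\le\theta\le1$, and $\theta=1$ on $[-1,1]$. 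The product is compactly supported, hence lies in $\mathcal{S}(\R)$, takes values in $[0,1]$, and converges pointwise to $\mathbf{1}_{(-\infty,a)}$: for $x<a$ both factors eventually equal $1$, and for $x\ge a$ the first factor vanishes.

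No step is genuinely hard. The only points requiring care are the existence of an admissible Schwartz sequence and the remark that the endpoint $a$ is negligible, which is why the conclusion holds with either $(-\infty,a)$ or $(-\infty,a]$.
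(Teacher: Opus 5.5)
Your proof is correct and follows the paper's argument: you write $\E_{T_f,\varphi}[\psi_n]$ as $\int \psi_n\varphi f\,dx$ and apply dominated convergence. The paper dominates by $\varphi f$ directly rather than by $\|\varphi\|_\infty f$, which makes no difference. Your explicit admissible Schwartz sequence $\chi(n(a-x))\,\theta(x/n)$ goes slightly beyond the paper, which omits this step: it shows the hypothesis is never vacuous and so properly justifies the ``in particular'' clause.
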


\begin{proof}
Since $T_f$ is induced by the density $f$, the weak expectation has the
integral form
\[
\E_{T_f,\varphi}[\psi_n]
=\int_{\R}\psi_n(x)\varphi(x)f(x)\,dx.
\]
By assumption, $0\le \psi_n\le 1$ and $\psi_n(x)\to \mathbf{1}_{(-\infty,a)}(x)$
pointwise. Hence
\[
0\le \psi_n(x)\varphi(x)f(x)\le \varphi(x)f(x),
\]
and the dominating function $\varphi f$ is integrable because
$\varphi\in\mathcal{S}(\R)$ is bounded and $f\in L^1(\R)$.
Therefore, by the dominated convergence theorem,
\[
\lim_{n\to\infty}\E_{T_f,\varphi}[\psi_n]
=
\int_{\R}\mathbf{1}_{(-\infty,a)}(x)\varphi(x)f(x)\,dx
=
\int_{-\infty}^{a} f(x)\varphi(x)\,dx = F_\varphi(a).
\]
\end{proof}

\begin{corollary}[Recovery of the classical distribution function]
\label{cor:cdf-recovery}
Assume that the kernel $\varphi$ is fixed and that the weak data
determine $f$ uniquely---for example, under the uniqueness conditions developed
in Section~\ref{sec:moment_problem}. Then the ordinary cumulative distribution function
\[
F(a)=\int_{-\infty}^{a} f(x)\,dx
\]
is recovered from the weak representation.
\end{corollary}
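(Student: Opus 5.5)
The argument is short, since the hypothesis already supplies the hard part. The corollary assumes that the map $f\mapsto\{\E_{T_f,\varphi}[\psi]\}_\psi$ is injective on the admissible class of densities, so all that remains is to make explicit how $f$, and then $F$, is read off. I would argue as follows.

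First, fix the kernel $\varphi$ and suppose two densities $f,g$ in the class covered by the relevant uniqueness theorem of Section~\ref{sec:moment_problem} produce the same weak data, e.g.\ $\langle T_f,x^n\varphi\rangle=\langle T_g,x^n\varphi\rangle$ for all $n\ge0$. By the assumed uniqueness, $T_f=T_g$ in $\mathcal{S}'(\R)$. Since the map $f\mapsto T_f$ from $L^1(\R)$ into $\mathcal{S}'(\R)$ is injective (du Bois-Reymond lemma), this gives $f=g$ almost everywhere. Consequently
\[
F(a)=\int_{-\infty}^a f\,dx
\]
is a well-defined functional of the weak data: two representations with equal weak data yield the same $F$ for every $a$. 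That is precisely what ``recovered from the weak representation'' means.

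Second, I would add a more constructive route through Proposition~\ref{prop:weighted-cdf-recovery}, assuming $\varphi>0$ everywhere. The weak data determine $F_\varphi(a)$ for each $a$, and $F_\varphi$ is absolutely continuous with $F_\varphi'=f\varphi$ almost everywhere. Dividing by the strictly positive $\varphi$ recovers $f=F_\varphi'/\varphi$ almost everywhere, and integrating gives
\[
F(a)=\int_{-\infty}^a \frac{dF_\varphi(x)}{\varphi(x)},
\]
where monotone convergence justifies the integral since $f\ge0$. In this version the uniqueness hypothesis serves only to guarantee that the weak data of Section~\ref{sec:moment_problem} (the moments) determine the full functional $\psi\mapsto\E_{T_f,\varphi}[\psi]$, and hence $F_\varphi$.

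The main subtlety is bookkeeping rather than analysis. One must be clear about what ``weak data'' means: the moment sequence, or the whole functional on $\mathcal{S}$. The uniqueness theorems make these equivalent on the stated class of densities. One must also note that $\varphi>0$ is needed for the constructive division step. If $\varphi$ vanishes somewhere, only the abstract injectivity argument of the first step is available, and it suffices.
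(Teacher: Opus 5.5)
Both of your routes are correct. The first is essentially the paper's proof. The paper passes through Proposition~\ref{prop:weighted-cdf-recovery}, invokes the hypothesis that the weak representation determines $f$, and integrates. Your du Bois-Reymond remark usefully makes explicit a step the paper leaves implicit: going from $T_f=T_g$ in $\mathcal{S}'(\R)$ (which is what Theorems~\ref{thm:gauss-unique} and~\ref{thm:dc-unique} actually deliver) to $f=g$ almost everywhere.

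Your second route is genuinely different and sharper. The paper calls the recovery of $f$ from $F_\varphi$ \emph{the non-trivial step} and delegates it to Section~\ref{sec:moment_problem}. You show instead that for $\varphi>0$ it is elementary: $F_\varphi$ is absolutely continuous with $F_\varphi'=f\varphi$ a.e., so $f=F_\varphi'/\varphi$ and $F(a)=\int_{-\infty}^a\varphi^{-1}\,dF_\varphi$. This pins down what the uniqueness theorems are really for, namely passing from the moment sequence to the full functional $\psi\mapsto\E_{T_f,\varphi}[\psi]$. It also shows that if that functional (or $F_\varphi$) is the datum, positivity of $\varphi$ suffices, with no exponential-tail or Gevrey-type condition. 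The abstract route, for its part, covers the hypothesis verbatim whatever the weak data are and needs no positivity, but it yields no procedure.

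Two small refinements. First, the explicit inversion is exact but ill-posed: dividing by $\varphi$ (and differentiating $F_\varphi$) amplifies noise in the tails where $\varphi$ is small. This is precisely the instability that the Tikhonov reconstruction of Appendix~\ref{app:reconstruction} is built to control. Second, your closing caveat is slightly too pessimistic. The division step still works whenever $\{\varphi=0\}$ is Lebesgue-null (for instance, isolated zeros), since $f=F_\varphi'/\varphi$ then holds on a set of full measure.
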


\begin{proof}
By Proposition~\ref{prop:weighted-cdf-recovery}, the weak expectation
determines $F_{\varphi}(a)=\int_{-\infty}^{a}f(x)\varphi(x)\,dx$ for every
$a$. The non-trivial step is the uniqueness of $f$ from $F_\varphi$: under the
assumptions of Section~\ref{sec:moment_problem}, the weak representation
determines $f$ uniquely. Once $f$ is known, the ordinary cumulative
distribution function $F(a)=\int_{-\infty}^{a}f(x)\,dx$ is obtained by direct integration.
\end{proof}

\begin{remark}\label{rem:probability-recovery}
Proposition~\ref{prop:weighted-cdf-recovery} shows that the weak expectation
determines event probabilities only indirectly, through approximation by
admissible smooth functions. What is obtained directly is the
kernel-weighted distribution function $F_{\varphi}$. The ordinary
distribution function is then recovered whenever the underlying density is
identifiable from the weak representation.  Thus, despite the absence of a classical density in the general setting, the weak framework does not sacrifice probabilistic content: it supports the same kinds of probabilistic statements---event probabilities, quantiles, tail probabilities---as classical density-based inference.
\end{remark}

\begin{remark}[Reconstruction from weak data]
\label{rem:reconstruction}
In the classical density setting, the weak representation determines the
product $g(x)=\varphi(x)f(x)$. Recovering $f$ from $g$ is an inverse problem,
which is in general ill-posed due to the decay of $\varphi$ at infinity.
However, stable reconstruction can be achieved by standard regularisation
methods. In Appendix~\ref{app:reconstruction}, we show that Tikhonov
regularisation yields a well-posed recovery procedure in $L^2(\R)$, with
explicit error bounds under noise.
\end{remark}

\subsection{Weak moments, transforms, and cumulants}
\label{subsec:preliminaries_moments}

\begin{definition}[Weak moments]\label{def:weak-moments}
For $n\geq 0$, the $n$-th \emph{weak moment} is
\[
{}^{(\varphi)}m_n \;:=\; \E_{T,\varphi}[x^n] \;=\; \langle T,\,x^n\varphi(x)\rangle.
\]
\end{definition}

\begin{proposition}\label{prop:moments-exist}
All weak moments are well defined: ${}^{(\varphi)}m_n$ exists for every $n\geq 0$ and every pair $(T,\varphi)$.
\end{proposition}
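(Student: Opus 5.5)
The plan is to reduce the claim to the fact that $x^n\varphi$ is itself a Schwartz function; the pairing with $T$ is then defined by the very definition of $\mathcal{S}'(\R)$. Concretely, ${}^{(\varphi)}m_n=\langle T,x^n\varphi\rangle$ is meaningful as soon as $x^n\in\mathcal{A}_\varphi$, that is, as soon as $x^n\varphi\in\mathcal{S}(\R)$. So the whole task is to verify that the Schwartz class is closed under multiplication by polynomials, the property already singled out in Section~\ref{subsec:preliminaries_spaces}, and to do so with explicit seminorm bounds.

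First, smoothness of $x^n\varphi$ is immediate, since both factors are $C^\infty$. Second, fix seminorm indices $\alpha,\beta\ge0$ and apply the Leibniz rule:
\[
x^\alpha\,\partial^\beta\!\left(x^n\varphi\right)=\sum_{j=0}^{\min(\beta,n)}\binom{\beta}{j}\frac{n!}{(n-j)!}\,x^{\alpha+n-j}\,\varphi^{(\beta-j)}(x).
\]
Each term on the right is bounded in sup norm by a Schwartz seminorm of $\varphi$, namely $\sup_x|x^{\alpha+n-j}\varphi^{(\beta-j)}(x)|<\infty$. Hence
\[
\sup_x\bigl|x^\alpha\partial^\beta(x^n\varphi)\bigr|\le C_{n,\beta}\max_{j\le\min(\beta,n)}\sup_x\bigl|x^{\alpha+n-j}\varphi^{(\beta-j)}(x)\bigr|<\infty .
\]
This gives $x^n\varphi\in\mathcal{S}(\R)$. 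Since $T$ is a continuous linear functional on $\mathcal{S}(\R)$, the value $\langle T,x^n\varphi\rangle$ is then a well-defined finite (complex) number. No hypothesis on $T$ beyond temperedness is used, and none on $\varphi$ beyond $\varphi\in\mathcal{S}$; in particular positivity and normalisation play no role. This matches the claim that the result holds for every pair $(T,\varphi)$.

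As a byproduct, I would note that the multiplication map $M_n:\psi\mapsto x^n\psi$ is continuous $\mathcal{S}\to\mathcal{S}$, by the seminorm estimate above. Consequently $\varphi\mapsto{}^{(\varphi)}m_n$ is continuous in the kernel, which is useful later. There is no real obstacle here. The only point needing care is bookkeeping in the Leibniz expansion, namely that the derivative index drops while the polynomial weight rises by at most $n$, both of which are absorbed by the Schwartz seminorms of $\varphi$.
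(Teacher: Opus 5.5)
Your proposal is correct and matches the paper's proof: the paper also reduces the claim to $x^n\varphi\in\mathcal{S}$ by closure of the Schwartz class under polynomial multiplication and then invokes the definition of $\mathcal{S}'$, but it only cites that closure property, whereas you verify it with the Leibniz seminorm estimate. The paper states the result on $\R^d$, and your argument carries over unchanged with multi-indices in $x^\alpha\partial^\beta$.
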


\begin{proof}
Since $\varphi \in \mathcal{S}(\R^d)$ and $\mathcal{S}$ is closed under multiplication by polynomials, the product $x^n\varphi(x)$ belongs to $\mathcal{S}(\R^d)$.  As $T \in \mathcal{S}'(\R^d)$, the pairing $\langle T, x^n\varphi(x)\rangle$ is well defined for every $n$.
\end{proof}

\begin{remark}\label{rem:weak-vs-classical}
For a classical density $f$ and pair $(T_f,\varphi)$, the weak moment is ${}^{(\varphi)}m_n = \int x^n f(x)\varphi(x)\,dx$, which differs from the classical moment $m_n = \int x^n f(x)\,dx$ unless $\varphi \equiv 1$.  In the limit $\varphi \to 1$, the two coincide whenever $m_n$ exists.  For a general kernel, ${}^{(\varphi)}m_n$ is a \emph{regularised} version of $m_n$.
\end{remark}

\begin{definition}[Weak characteristic function]\label{def:wcf}
The \emph{weak characteristic function} of a generalised probability measure $(T,\varphi)$ on $\R$ is
\[
{}^{(\varphi)}\phi(t)
\;:=\;
\E_{T,\varphi}[e^{itx}]
\;=\;
\langle T,\,e^{itx}\varphi(x)\rangle,
\qquad t\in\R.
\]
\end{definition}

\begin{definition}[Weak cumulant generating function and weak cumulants]\label{def:cumulants}
Assume ${}^{(\varphi)}\phi(t)\neq 0$ in a neighbourhood of $t=0$.  The \emph{weak cumulant generating function} is ${}^{(\varphi)}K(t):=\log {}^{(\varphi)}\phi(t)$, and the $n$-th \emph{weak cumulant} is
\[
{}^{(\varphi)}\kappa_n
\;:=\;
\left.\frac{d^n}{dt^n}{}^{(\varphi)}K(t)\right|_{t=0}.
\]
\end{definition}

\begin{remark}
Weak moments, weak characteristic functions, and weak cumulants all depend on the kernel~$\varphi$, as the superscript notation makes explicit.  They are properties of the pair $(T,\varphi)$, not of $T$ alone.  The kernel should therefore be viewed as part of the model specification.
\end{remark}

\begin{remark}[Stability of weak measurements]\label{rem:continuity}
Tempered distributions act \emph{continuously} on Schwartz space: if
$\varphi_j\to\varphi$ in $\mathcal{S}(\R^d)$ and $T\in\mathcal{S}'(\R^d)$, then
$\langle T,\varphi_j\rangle\to\langle T,\varphi\rangle$.  The dependence of the
weak moments, cumulants, and characteristic function on the kernel is therefore
not merely formal but continuous in the natural topology of the test space:
small smooth changes of the probe produce small changes of the reading.  This
is the analytic basis of the kernel-uniform behaviour exploited in
Section~\ref{sec:clt} (Theorem~\ref{thm:weak-BE-uniform}), and it makes the weak
framework an intrinsically regularised one rather than a merely formal
extension of moment theory.
\end{remark}

\subsection{The running Student's \texorpdfstring{$t$}{t} example}
\label{subsec:preliminaries_kernel_t}

As a guiding example throughout the paper, we use the Student's $t_\nu$ family with density $f_\nu(x)=c_\nu(1+x^2/\nu)^{-(\nu+1)/2}$.  Classically, $\E[|X|^k]<\infty$ iff $k<\nu$; for $\nu=1$ (Cauchy), no non-trivial ordinary moment exists.

For any $\varphi\in\mathcal{S}(\R)$, the pair $(T_\nu,\varphi)$ defines a generalised probability measure, and the weak moments ${}^{(\varphi)}m_k = \E_{T_\nu,\varphi}[x^k]$ are well defined for all $k\geq 0$, regardless of~$\nu$.  We shall illustrate the main results with this family as we proceed.

\section{Independence and additivity of weak cumulants}
\label{sec:independence}

\subsection{Product representations and independence}
\label{subsec:independence_product}

Let $(T_1,\varphi_1)$ and $(T_2,\varphi_2)$ be generalised probability measures on $\R$.  Their \emph{product} is the pair $(T_1\otimes T_2,\,\varphi_1\otimes\varphi_2)$ on $\R^2$, where $\otimes$ denotes the tensor product.

\begin{definition}[Independence]
Two generalised random variables represented by $(T_1,\varphi_1)$ and $(T_2,\varphi_2)$ are \emph{independent} if their joint representation is the product pair $(T_1\otimes T_2,\,\varphi_1\otimes\varphi_2)$.
\end{definition}

\begin{remark}
Independence depends on the chosen representation, since the kernel is part of the model specification.  This is analogous to the role of a $\sigma$-algebra in conditional independence.  In practice, the kernel is fixed at the modelling stage, and independence is well defined relative to that choice.
\end{remark}

\begin{remark}[Classical consistency]\label{rem:independence-classical}
When $T_i = T_{f_i}$ for densities $f_i\ge 0$, the tensor product
$T_{f_1}\otimes T_{f_2}$ is the distribution induced by the product
density $f_1(x)f_2(y)$:
$\langle T_{f_1}\otimes T_{f_2},\Psi\rangle
= \int\!\int f_1(x)f_2(y)\Psi(x,y)\,dx\,dy$.
Thus, in the classical setting, the definition above reduces to the
standard product-density characterisation of independence.
\end{remark}

\subsection{Weak characteristic functions of sums}
\label{subsec:independence_cf}

Let $X$ and $Y$ be independent generalised random variables with pairs $(T_1,\varphi_1)$ and $(T_2,\varphi_2)$, and set $Z=X+Y$.

\begin{proposition}\label{prop:cf-factorise}
The weak characteristic function factorises:
\[
{}^{(\varphi)}\phi_Z(t) = {}^{(\varphi_1)}\phi_1(t)\,{}^{(\varphi_2)}\phi_2(t).
\]
\end{proposition}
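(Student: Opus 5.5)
The plan is to make precise what the weak characteristic function of $Z=X+Y$ means, and then reduce the identity to the tensor-product rule for tempered distributions.

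By the definition of independence, the joint law of $(X,Y)$ is the pair $(T_1\otimes T_2,\,\varphi_1\otimes\varphi_2)$ on $\R^2$. The natural reading of ${}^{(\varphi)}\phi_Z$ is therefore the weak expectation of the function $\psi(x,y)=e^{it(x+y)}$ under this joint pair, with $\varphi=\varphi_1\otimes\varphi_2$ understood as the kernel in play:
\[
{}^{(\varphi)}\phi_Z(t):=\bigl\langle T_1\otimes T_2,\; e^{it(x+y)}\,\varphi_1(x)\varphi_2(y)\bigr\rangle .
\]
I would state this explicitly at the start, since the statement leaves the kernel of $Z$ implicit.

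The first step is to check admissibility, namely that $\psi\in\mathcal{A}_{\varphi_1\otimes\varphi_2}$. The test function factorises as
\[
e^{it(x+y)}\varphi_1(x)\varphi_2(y)=\bigl(e^{itx}\varphi_1(x)\bigr)\bigl(e^{ity}\varphi_2(y)\bigr)=\chi_1(x)\,\chi_2(y).
\]
Each factor $\chi_j:=e^{it\cdot}\varphi_j$ lies in $\mathcal{S}(\R)$, because multiplying a Schwartz function by $e^{itx}$ preserves $\mathcal{S}$: every derivative of $e^{itx}$ is bounded by a polynomial in $|t|$. A tensor product of two Schwartz functions is Schwartz on $\R^2$, so the pairing is well defined.

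The second step invokes the defining property of the tensor product of tempered distributions, $\langle T_1\otimes T_2,\chi_1\otimes\chi_2\rangle=\langle T_1,\chi_1\rangle\langle T_2,\chi_2\rangle$ for $\chi_j\in\mathcal{S}(\R)$. This gives
\[
{}^{(\varphi)}\phi_Z(t)=\langle T_1,e^{itx}\varphi_1\rangle\,\langle T_2,e^{ity}\varphi_2\rangle={}^{(\varphi_1)}\phi_1(t)\;{}^{(\varphi_2)}\phi_2(t),
\]
which is the claim.

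The only real obstacle is interpretive rather than technical: one must fix what the weak characteristic function of $Z$ means. The kernel is not a function of $x+y$, so $Z$ has no pair on $\R$ directly, and it has to be defined through the joint representation. I would adopt the joint-pairing definition above and note, as in Remark~\ref{rem:independence-classical}, that for densities it equals $\iint e^{it(x+y)}f_1(x)\varphi_1(x)f_2(y)\varphi_2(y)\,dx\,dy$. Fubini then factorises this directly, which gives a classical sanity check.

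Tensor-product existence and its action on product test functions are standard (Schwartz kernel theorem / H\"ormander), so I would cite rather than reprove them.
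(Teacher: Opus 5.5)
Your proposal is correct and follows the paper's proof. The paper likewise takes ${}^{(\varphi)}\phi_Z(t)$ to be the pairing of $T_1\otimes T_2$ with $e^{it(x+y)}\varphi_1(x)\varphi_2(y)$, and factorises it by the defining property of the tensor product on product test functions; your check that $e^{it\cdot}\varphi_j\in\mathcal{S}(\R)$ and your remark that $Z$'s kernel is only defined through the joint representation spell out what the paper leaves implicit.
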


\begin{proof}
By the defining property of the tensor product,
\begin{align*}
{}^{(\varphi)}\phi_Z(t)
&= \langle T_1 \otimes T_2,\,
e^{it(x+y)}\varphi_1(x)\varphi_2(y)\rangle \\
&= \langle T_1,\,e^{itx}\varphi_1(x)\rangle\;
\langle T_2,\,e^{ity}\varphi_2(y)\rangle
= {}^{(\varphi_1)}\phi_1(t)\;{}^{(\varphi_2)}\phi_2(t).
\end{align*}
\end{proof}

\subsection{Additivity of weak cumulants}
\label{subsec:independence_cumulants}

\begin{theorem}\label{thm:cumulant-additivity}
Let $X$ and $Y$ be independent generalised random variables.  Then ${}^{(\varphi)}K_{X+Y}(t) = {}^{(\varphi_1)}K_X(t) + {}^{(\varphi_2)}K_Y(t)$, whenever the logarithms are defined.
\end{theorem}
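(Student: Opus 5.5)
The plan is to take logarithms in the factorisation of Proposition~\ref{prop:cf-factorise} and to handle the branch of the logarithm with some care. By that proposition, for every $t\in\R$,
\[
{}^{(\varphi)}\phi_{X+Y}(t)={}^{(\varphi_1)}\phi_X(t)\,{}^{(\varphi_2)}\phi_Y(t).
\]
The phrase ``whenever the logarithms are defined'' will be read in the sense of Definition~\ref{def:cumulants}. That is, there is a neighbourhood $U$ of $t=0$ on which ${}^{(\varphi_1)}\phi_X$ and ${}^{(\varphi_2)}\phi_Y$ do not vanish, and hence, by the factorisation, ${}^{(\varphi)}\phi_{X+Y}$ does not vanish there either. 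We shrink $U$ to an open interval containing $0$.

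Next we check regularity. Each weak characteristic function is continuous, and indeed $C^\infty$, in $t$. The reason is that $t\mapsto e^{itx}\varphi_i(x)$ is a smooth map into $\mathcal{S}(\R)$: difference quotients converge in the Schwartz topology to $ix\,e^{itx}\varphi_i(x)$, because $\mathcal{S}$ is closed under multiplication by polynomials. Tempered distributions act continuously on $\mathcal{S}$ (Remark~\ref{rem:continuity}), so the functions are differentiable. On the simply connected interval $U$, a nonvanishing continuous complex function has a continuous logarithm, and this logarithm is unique once its value at $0$ is fixed.

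We normalise every branch by $K(0)=\log\phi(0)$. For generalised probability measures this gives $K(0)=0$, since $\phi(0)=\langle T,\varphi\rangle=1$. For the sum, the normalisation holds because $\phi_{X+Y}(0)=\langle T_1,\varphi_1\rangle\langle T_2,\varphi_2\rangle=1$.

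With these choices, the function $g(t):={}^{(\varphi_1)}K_X(t)+{}^{(\varphi_2)}K_Y(t)$ is continuous on $U$ and satisfies $e^{g}={}^{(\varphi)}\phi_{X+Y}$ and $g(0)=0$. By uniqueness of the continuous logarithm with prescribed value at $0$, $g$ equals ${}^{(\varphi)}K_{X+Y}$ on $U$, which is the claimed identity. Differentiating $n$ times at $0$ then yields additivity of the weak cumulants, ${}^{(\varphi)}\kappa_n(X+Y)={}^{(\varphi_1)}\kappa_n(X)+{}^{(\varphi_2)}\kappa_n(Y)$, as an immediate consequence.

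The main obstacle is the branch issue. Without a consistent normalisation, $\log(ab)=\log a+\log b$ holds only modulo $2\pi i$. The uniqueness argument for continuous logarithms on a connected neighbourhood, anchored at $t=0$, is what removes this ambiguity, so this step needs to be stated explicitly. Smoothness of the weak characteristic function is routine by comparison.
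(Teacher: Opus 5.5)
Your proof is correct and follows the same route as the paper, which simply takes logarithms in the factorisation of Proposition~\ref{prop:cf-factorise}. Your choice of the continuous branch normalised by $K(0)=0$ on a connected neighbourhood of the origin makes precise a step the paper leaves implicit: the identity holds exactly, not merely modulo $2\pi i$.
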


\begin{proof}
From Proposition~\ref{prop:cf-factorise}, $\phi_{X+Y}(t)=\phi_X(t)\phi_Y(t)$; taking logarithms yields the result.
\end{proof}

\begin{corollary}\label{cor:cumulant-additivity}
If ${}^{(\varphi_1)}\phi_X(0)\neq 0$ and ${}^{(\varphi_2)}\phi_Y(0)\neq 0$, then the weak cumulants are additive: ${}^{(\varphi)}\kappa_n(X+Y) = {}^{(\varphi_1)}\kappa_n(X) + {}^{(\varphi_2)}\kappa_n(Y)$ for all $n \geq 1$.
\end{corollary}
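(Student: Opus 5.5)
The plan is to derive the corollary from Theorem~\ref{thm:cumulant-additivity} by differentiating at $t=0$. The identity of cumulant generating functions holds on a whole neighbourhood of the origin, not just at one point, so differentiating both sides $n$ times there gives the additivity of the ${}^{(\varphi)}\kappa_n$.

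\textbf{Step 1: the logarithms exist near the origin.} Each weak characteristic function is continuous, and in fact $C^\infty$ in $t$. This holds because
\[
\partial_t^k\bigl(e^{itx}\varphi(x)\bigr)=(ix)^k e^{itx}\varphi(x),
\]
and the difference quotients in $t$ converge in $\mathcal{S}(\R)$, uniformly for $t$ in compact sets. Since $T\in\mathcal{S}'$ is continuous on $\mathcal{S}$, the pairing ${}^{(\varphi_i)}\phi_i(t)$ is $C^\infty$ with derivatives $\langle T_i,(ix)^k e^{itx}\varphi_i\rangle$. From ${}^{(\varphi_1)}\phi_X(0)\ne0$ and ${}^{(\varphi_2)}\phi_Y(0)\ne0$, continuity gives a common neighbourhood $U$ of $0$ on which both are nonzero. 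Then Proposition~\ref{prop:cf-factorise} shows the product ${}^{(\varphi)}\phi_Z$ is also nonzero on $U$.

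\textbf{Step 2: choose compatible branches of the logarithm.} On a small disc-free interval $U$ I fix the continuous (hence smooth) branches $K_X=\log{}^{(\varphi_1)}\phi_X$ and $K_Y=\log{}^{(\varphi_2)}\phi_Y$. For $Z$ I take $K_Z:=K_X+K_Y$, which is a valid logarithm of ${}^{(\varphi)}\phi_Z$ on $U$. Any other continuous branch of $\log{}^{(\varphi)}\phi_Z$ differs from it by a constant $2\pi i k$. That constant vanishes under every derivative of order $n\ge1$, so the cumulants of $Z$ do not depend on the branch. This is exactly why the statement is restricted to $n\ge 1$.

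\textbf{Step 3: differentiate.} Because $K_Z=K_X+K_Y$ on all of the open set $U$, differentiating $n$ times and evaluating at $t=0$ gives
\[
{}^{(\varphi)}\kappa_n(X+Y)={}^{(\varphi_1)}\kappa_n(X)+{}^{(\varphi_2)}\kappa_n(Y).
\]

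The main point needing care is Step 1: the smoothness of the weak characteristic function, so that the derivatives in Definition~\ref{def:cumulants} exist. I would justify it via the convergence in $\mathcal{S}$ of the difference quotients described there. The remaining work, the branch issue in Step 2, is routine.
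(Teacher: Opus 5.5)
Your proposal is correct and follows essentially the same route as the paper: smoothness and non-vanishing of the weak characteristic functions near $0$ make the cumulant generating functions smooth there, and the identity of Theorem~\ref{thm:cumulant-additivity} is then differentiated $n$ times at the origin. Your Step~2 adds one useful detail the paper leaves implicit. Defining $K_Z:=K_X+K_Y$ fixes the branch of the logarithm, and a different continuous branch would shift it only by a constant $2\pi i k$, which disappears under derivatives of order $n\ge 1$.
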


\noindent\textit{Proof.}\enspace See Appendix~\ref{app:proofs}.\medskip

\subsection{Affine transformations}
\label{subsec:independence_transformations}

\begin{proposition}[Translation]\label{prop:translation}
Let $a \in \R$ and define the translated pair $(T_a,\varphi_a)$ by $\langle T_a,\psi\rangle := \langle T,\psi(\cdot + a)\rangle$ and $\varphi_a(x):=\varphi(x-a)$.  Then ${}^{(\varphi_a)}\phi_a(t)=e^{ita}{}^{(\varphi)}\phi(t)$, and
\[
{}^{(\varphi_a)}\kappa_1 = {}^{(\varphi)}\kappa_1 + a,
\qquad
{}^{(\varphi_a)}\kappa_n = {}^{(\varphi)}\kappa_n \quad (n \geq 2).
\]
\end{proposition}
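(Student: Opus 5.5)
The plan is a direct computation: evaluate the weak characteristic function of the translated pair from the definitions, then pass to logarithms and differentiate at the origin.

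\emph{Step 1 (the transform).} By Definition~\ref{def:wcf} and the definition of $T_a$,
\[
{}^{(\varphi_a)}\phi_a(t)=\langle T_a,\,e^{itx}\varphi(x-a)\rangle
=\langle T,\,e^{it(x+a)}\varphi(x)\rangle
=e^{ita}\,{}^{(\varphi)}\phi(t).
\]
Here I substitute $\psi(x)=e^{itx}\varphi(x-a)$, so that $\psi(x+a)=e^{it(x+a)}\varphi(x)$. I also check that this test function lies in $\mathcal{S}(\R)$: translation preserves $\mathcal{S}$, and multiplication by a bounded exponential with polynomially bounded derivatives preserves $\mathcal{S}$. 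The normalisation is preserved as well, since at $t=0$ we get $\langle T_a,\varphi_a\rangle=\langle T,\varphi\rangle=1$. So the translated pair is again admissible.

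\emph{Step 2 (logarithms).} Since $|e^{ita}|=1$, ${}^{(\varphi_a)}\phi_a$ is nonvanishing on exactly the same neighbourhood of $0$ as ${}^{(\varphi)}\phi$, so Definition~\ref{def:cumulants} applies to both. On that neighbourhood I take continuous branches of the logarithm. This gives
\[
{}^{(\varphi_a)}K_a(t)={}^{(\varphi)}K(t)+ita+2\pi i k
\]
for a constant integer $k$, and this constant disappears under differentiation. This is the same mechanism as Theorem~\ref{thm:cumulant-additivity}. Indeed, it is that theorem applied to $X$ and the degenerate law $\delta_a$, whose weak transform is $e^{ita}$ when probed by a kernel equal to $1$ at $a$. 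I will nevertheless argue directly, since $\delta_a$ with such a kernel is not literally one of the paper's pairs.

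\emph{Step 3 (differentiation).} Smoothness of ${}^{(\varphi)}\phi$ in $t$ follows by differentiating under the pairing, because $x^n e^{itx}\varphi\in\mathcal{S}$. Differentiating the identity from Step 2 $n$ times at $t=0$, the linear term $ita$ contributes only to the first derivative, and all higher derivatives agree. This gives the claims ${}^{(\varphi_a)}\kappa_n={}^{(\varphi)}\kappa_n$ for $n\ge2$ immediately.

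The main obstacle is the normalisation of the first cumulant. Taken literally, Definition~\ref{def:cumulants} contains no factor $i^{-n}$, so $\frac{d}{dt}(ita)=ia$ and the literal shift is by $ia$, not $a$. I would therefore state explicitly that the identity ${}^{(\varphi_a)}\kappa_1={}^{(\varphi)}\kappa_1+a$ uses the conventional scaling $\kappa_n=i^{-n}\,\frac{d^n}{dt^n}K(0)$, which is presumably the one intended. Under the literal definition the correct statement is the shift by $ia$. The conclusion for $n\ge2$ holds under either convention.
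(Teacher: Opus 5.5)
Your argument is the paper's proof: the same substitution $\psi(x)=e^{itx}\varphi(x-a)$ giving ${}^{(\varphi_a)}\phi_a(t)=e^{ita}\,{}^{(\varphi)}\phi(t)$, followed by logarithms and differentiation at $t=0$; your admissibility and branch checks are slightly more careful than the paper's. Your caveat is also correct and exposes a slip the paper's proof glosses over: from $K_a'(0)=ia+{}^{(\varphi)}K'(0)$, a shift by $a$ (rather than $ia$) follows only under the convention $\kappa_n=i^{-n}K^{(n)}(0)$, which is implicit in the paper's expansion ${}^{(\varphi)}K(t)=\sum_n{}^{(\varphi)}\kappa_n(it)^n/n!$, and not under the literal Definition~\ref{def:cumulants}.
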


\noindent\textit{Proof.}\enspace See Appendix~\ref{app:proofs}.\medskip

\begin{remark}[Kernel-shift convention]\label{rem:kernel-shift}
In Proposition~\ref{prop:translation} both the distribution and the
kernel shift together: $T\mapsto T_a$, $\varphi\mapsto\varphi_a$.
This yields the clean phase rule
${}^{(\varphi_a)}\phi_a(t)=e^{ita}\,{}^{(\varphi)}\phi(t)$ and
preserves the algebraic structure of cumulants.  An alternative
convention---keeping the kernel fixed and shifting only the
distribution---corresponds to measuring the translated random
variable $X+a$ with the \emph{same} instrument~$\varphi$.  In that
case the weak expectation becomes
$\E_{T_a,\varphi}[\psi] = \langle T,
\psi(\cdot+a)\varphi(\cdot+a)\rangle$, which mixes the shift of~$T$
with the localisation region of~$\varphi$.  The ``fixed kernel''
perspective is the natural setting for the observation-operator
framework developed in the companion paper~\cite{LabouriauPaperD},
where the kernel plays the role of a fixed measurement device.  Both
conventions are valid; the present paper adopts the ``co-moving
kernel'' convention throughout.
\end{remark}

\begin{proposition}[Scaling]\label{prop:scaling}
Let $b \neq 0$ and define $(T_b,\varphi_b)$ by $\E_{T_b,\varphi_b}[\psi] := \E_{T,\varphi}[\psi(b\,\cdot)]$ and $\varphi_b(x):=\varphi(bx)$.  Then ${}^{(\varphi)}\phi_b(t)={}^{(\varphi)}\phi(bt)$ and ${}^{(\varphi_b)}\kappa_n = b^n{}^{(\varphi)}\kappa_n$.
\end{proposition}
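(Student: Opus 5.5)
The argument reads both claims off the defining relation $\E_{T_b,\varphi_b}[\psi]=\E_{T,\varphi}[\psi(b\,\cdot)]$, in parallel with Proposition~\ref{prop:translation}.

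\textbf{Step 1: the weak characteristic function.} Take $\psi(x)=e^{itx}$, which lies in $\mathcal{A}_\varphi$. Then $\psi(bx)=e^{i(bt)x}$, so
\[
{}^{(\varphi)}\phi_b(t)=\E_{T_b,\varphi_b}[e^{it\cdot}]=\E_{T,\varphi}[e^{i(bt)\cdot}]={}^{(\varphi)}\phi(bt).
\]
This is the first identity. The superscript on the left is only a label for the rescaled pair; by definition the pairing is $\langle T_b, e^{itx}\varphi_b\rangle$.

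To confirm that the definition is coherent, I would realise $T_b$ concretely. For $\chi\in\mathcal{S}$, set
\[
\langle T_b,\chi\rangle:=\bigl\langle T,\chi(b\,\cdot)\,\varphi(\cdot)/\varphi_b(b\,\cdot)\bigr\rangle .
\]
Since $\varphi_b(bx)=\varphi(b^2x)$, this is not literally $\varphi$. The cleaner route is to take the displayed relation as the definition of the weak expectation of the scaled pair. This is legitimate because $\psi\in\mathcal{A}_{\varphi_b}$ exactly when $\psi(b\cdot)\in\mathcal{A}_\varphi$ up to the reparametrisation. I would note this as the one point of care and otherwise proceed formally, exactly as for translation.

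\textbf{Step 2: the weak cumulants.} By Step 1, ${}^{(\varphi)}\phi_b(0)={}^{(\varphi)}\phi(0)$. So if ${}^{(\varphi)}\phi$ is nonzero near $0$, then ${}^{(\varphi)}\phi_b$ is nonzero near $0$ as well, on a neighbourhood rescaled by $1/|b|$. The same branch of the logarithm can therefore be used, which gives
\[
{}^{(\varphi_b)}K(t)={}^{(\varphi)}K(bt)
\]
on that neighbourhood.

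Smoothness is automatic. Differentiation under the pairing is justified because $t\mapsto e^{itx}\varphi$ is smooth into $\mathcal{S}$, with derivatives $(ix)^k e^{itx}\varphi$. Hence $K$ is $C^\infty$ near $0$. The chain rule then gives
\[
\frac{d^n}{dt^n}{}^{(\varphi)}K(bt)=b^n\,\bigl({}^{(\varphi)}K\bigr)^{(n)}(bt).
\]
Evaluating at $t=0$ yields ${}^{(\varphi_b)}\kappa_n=b^n\,{}^{(\varphi)}\kappa_n$.

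\textbf{Main obstacle.} The analysis itself is routine; the only real obstacle is bookkeeping. One must make sure the scaled pair is well defined, so that the kernel rescaling $\varphi_b(x)=\varphi(bx)$ is consistent with the defining identity. One must also handle the logarithm branch, which is fixed by the common value at $t=0$.
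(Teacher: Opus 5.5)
Your proof is correct and follows the same route as the paper's: substitute $\psi(x)=e^{itx}$ into the operational defining relation to get ${}^{(\varphi)}\phi_b(t)={}^{(\varphi)}\phi(bt)$, then take logarithms and differentiate $n$ times at $t=0$ using the chain rule. The paper likewise treats the defining relation as the operational definition of the scaled pair, so your detour through a concrete realisation of $T_b$ is unnecessary (as you noticed, that realisation does not fit with $\varphi_b(x)=\varphi(bx)$); your remarks on the choice of logarithm branch add some care that the paper omits.
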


\noindent\textit{Proof.}\enspace See Appendix~\ref{app:proofs}.\medskip

\section{Weak transforms and weak cumulants}
\label{sec:cumulants}

\subsection{Smoothness of weak characteristic functions}
\label{subsec:cumulants_cf}

\begin{proposition}\label{prop:wcf-smooth}
The weak characteristic function ${}^{(\varphi)}\phi(t)$ is infinitely differentiable in $t$, and for each $n \geq 0$,
\[
({}^{(\varphi)}\phi)^{(n)}(t)
=
\E_{T,\varphi}[(ix)^n e^{itx}]
=
\langle T, (ix)^n e^{itx}\varphi(x) \rangle.
\]
\end{proposition}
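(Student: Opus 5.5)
The plan is to show that the map $t\mapsto \Phi_t:=e^{itx}\varphi(x)$ is differentiable as a map from $\R$ into $\mathcal{S}(\R)$, with derivative $\partial_t\Phi_t=(ix)e^{itx}\varphi(x)$. Since $T$ is a continuous linear functional on $\mathcal{S}(\R)$, composing with it then gives
\[
\frac{d}{dt}\,{}^{(\varphi)}\phi(t)=\langle T,\,(ix)e^{itx}\varphi(x)\rangle .
\]
Because $(ix)^n\varphi\in\mathcal{S}(\R)$ for every $n$ (closure under polynomial multiplication, as in Proposition~\ref{prop:moments-exist}), the same argument applies with $\varphi$ replaced by $(ix)^n\varphi$. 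Induction on $n$ then yields both infinite differentiability and the stated formula. The base case $n=0$ is Definition~\ref{def:wcf}.

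The key step, and the only real obstacle, is convergence of difference quotients in the Schwartz topology. For $h\neq0$ we set
\[
R_h(x):=\frac{e^{i(t+h)x}-e^{itx}}{h}\varphi(x)-ixe^{itx}\varphi(x)=e^{itx}\varphi(x)\,g_h(x),\qquad g_h(x)=\frac{e^{ihx}-1-ihx}{h}.
\]
We must show $\sup_x|x^\alpha\partial_x^\beta R_h(x)|\to0$ as $h\to0$ for all $\alpha,\beta$.

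To do this, expand $\partial_x^\beta R_h$ by the Leibniz rule. This gives a finite sum of terms of the form (powers of $t$) $\times e^{itx}\times\partial^{\gamma}\varphi\times\partial^{\delta}g_h$. For the $g_h$ factors we use the elementary bounds $|g_h(x)|\le |h|x^2/2$ and, for $k\ge1$,
\[
\partial_x^k g_h(x)=\frac{(ih)^k e^{ihx}-ih\,\mathbf 1_{k=1}}{h},
\]
which are bounded by $C_k|h|(1+|x|)$ via $|e^{ihx}-1|\le|hx|$ for $k=1$, and trivially by $|h|^{k-1}$ for $k\ge2$. Each term is therefore bounded by $C|h|\,(1+|x|)^{\alpha+2}|\partial^\gamma\varphi(x)|$ for $|h|\le1$. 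The supremum over $x$ of this quantity is finite because $\varphi\in\mathcal{S}$, so every seminorm of $R_h$ is $O(|h|)$.

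Continuity of $T$ on $\mathcal{S}$ (Remark~\ref{rem:continuity}) then gives $\langle T,R_h\rangle\to0$, which is exactly the derivative formula at order one. Applying the same estimate to the kernel $(ix)^{n}\varphi$ closes the induction. Continuity of the $n$-th derivative in $t$ follows by an identical seminorm estimate for $e^{i(t+h)x}-e^{itx}$, which is in fact implied by differentiability at the next order.
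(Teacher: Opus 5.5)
Your proposal is correct and follows essentially the same route as the paper. Both arguments show that $t\mapsto e^{itx}\varphi(x)$ is differentiable into $\mathcal{S}(\R)$ by bounding every seminorm of the difference-quotient remainder by $O(|h|)$, pass the derivative through the continuous functional $T$, and iterate (you via the kernels $(ix)^n\varphi$, the paper via higher-order difference quotients). Your explicit computation of $\partial_x^k g_h$ is in fact a cleaner way to control the remainder than the paper's mean-value form $ix\,e^{itx}\varphi(x)(e^{i\theta hx}-1)$. That form relies on a mean value theorem for a complex-valued function and an $x$-dependent $\theta$, which cannot simply be differentiated in $x$.
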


\noindent\textit{Proof.}\enspace See Appendix~\ref{app:proofs}.\medskip

\begin{corollary}\label{cor:moments-from-cf}
The derivatives at the origin encode the weak moments:
$({}^{(\varphi)}\phi)^{(n)}(0) = i^n\, {}^{(\varphi)}m_n$.
\end{corollary}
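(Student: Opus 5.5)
The corollary follows by evaluating the derivative formula of Proposition~\ref{prop:wcf-smooth} at $t=0$. I will take that proposition as given: ${}^{(\varphi)}\phi$ is $C^\infty$ and, for every $n\ge0$ and every $t\in\R$,
\[
({}^{(\varphi)}\phi)^{(n)}(t)=\langle T,(ix)^n e^{itx}\varphi(x)\rangle .
\]
Setting $t=0$ turns the exponential factor into the constant function $1$, so
\[
({}^{(\varphi)}\phi)^{(n)}(0)=\langle T,(ix)^n\varphi(x)\rangle .
\]

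The second step pulls the scalar $i^n$ out of the pairing. Note that $(ix)^n\varphi=i^n\,x^n\varphi$, and that $x^n\varphi\in\mathcal{S}(\R)$ by closure of $\mathcal{S}$ under multiplication by polynomials (Proposition~\ref{prop:moments-exist}). Since $T$ is a complex-linear functional on $\mathcal{S}(\R)$, we get
\[
\langle T,(ix)^n\varphi\rangle=i^n\langle T,x^n\varphi\rangle .
\]
By Definition~\ref{def:weak-moments}, the right-hand side is $i^n\,{}^{(\varphi)}m_n$, which is the claim.

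There is no genuine obstacle here. The only analytic content is the differentiation under the pairing, and Proposition~\ref{prop:wcf-smooth} already supplies it. The one point to keep straight is the convention: the pairing must be bilinear, not sesquilinear, so that $i^n$ comes out unconjugated. This is consistent with how $e^{itx}$ is paired in Definition~\ref{def:wcf}.
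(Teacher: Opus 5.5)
Your proof is correct and follows the paper's own argument: set $t=0$ in Proposition~\ref{prop:wcf-smooth} and pull $i^n$ out of the pairing by linearity, which gives $i^n\,{}^{(\varphi)}m_n$. You also note that the pairing must be bilinear rather than sesquilinear; the paper uses this convention without stating it.
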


\noindent\textit{Proof.}\enspace See Appendix~\ref{app:proofs}.\medskip

\begin{remark}
Thus ${}^{(\varphi)}\phi$ is smooth and encodes all weak moments, even when the classical characteristic function is not differentiable at the origin (as for stable laws with $\alpha<2$).
\end{remark}

\subsection{Existence of weak cumulants and formal expansion}
\label{subsec:cumulants_cgf}

\begin{proposition}\label{prop:cumulants-exist}
If all weak moments exist and ${}^{(\varphi)}\phi(0)\neq 0$, then all weak cumulants ${}^{(\varphi)}\kappa_n$ exist.
\end{proposition}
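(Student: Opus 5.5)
The argument reduces the existence of weak cumulants to the smoothness of ${}^{(\varphi)}\phi$ together with local invertibility of the logarithm near $t=0$.

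\textbf{Step 1: smoothness.} By Proposition~\ref{prop:wcf-smooth}, ${}^{(\varphi)}\phi\in C^\infty(\R)$. By Corollary~\ref{cor:moments-from-cf}, its derivatives at the origin are $i^n\,{}^{(\varphi)}m_n$. These are finite by Proposition~\ref{prop:moments-exist}, so the hypothesis on the moments is automatic.

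\textbf{Step 2: a local logarithm.} Since ${}^{(\varphi)}\phi$ is continuous and ${}^{(\varphi)}\phi(0)={}^{(\varphi)}m_0\neq0$, there is $\delta>0$ with ${}^{(\varphi)}\phi(t)\neq0$ for $|t|<\delta$. In fact we may take $\delta$ so small that
\[
|{}^{(\varphi)}\phi(t)-{}^{(\varphi)}\phi(0)|<|{}^{(\varphi)}\phi(0)|
\]
there. Then ${}^{(\varphi)}\phi(t)/{}^{(\varphi)}\phi(0)$ lies in the disc $\{|w-1|<1\}$. On that disc the principal logarithm $\operatorname{Log}$ is holomorphic. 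Fixing any value $\ell_0$ of $\log{}^{(\varphi)}\phi(0)$, we define
\[
{}^{(\varphi)}K(t):=\ell_0+\operatorname{Log}\bigl({}^{(\varphi)}\phi(t)/{}^{(\varphi)}\phi(0)\bigr), \qquad |t|<\delta.
\]
This gives a well-defined branch satisfying $\exp({}^{(\varphi)}K)={}^{(\varphi)}\phi$. This is the hypothesis of Definition~\ref{def:cumulants}.

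\textbf{Step 3: differentiability.} The composition of a $C^\infty$ function of the real variable $t$ with a holomorphic function is $C^\infty$. Hence ${}^{(\varphi)}K\in C^\infty(-\delta,\delta)$, and each ${}^{(\varphi)}\kappa_n={}^{(\varphi)}K^{(n)}(0)$ exists.

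The choice of branch changes only the additive constant $\ell_0$. It therefore affects the zeroth derivative alone, so the $\kappa_n$ with $n\ge1$ are branch-independent.

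\textbf{Step 4: explicit formula.} Differentiating the relation ${}^{(\varphi)}K'\,{}^{(\varphi)}\phi=({}^{(\varphi)}\phi)'$ repeatedly by the Leibniz rule and evaluating at $t=0$ gives the recursion
\[
i^{n}\,{}^{(\varphi)}m_{n}=\sum_{k=1}^{n}\binom{n-1}{k-1}\,{}^{(\varphi)}\kappa_k\, i^{\,n-k}\,{}^{(\varphi)}m_{n-k}.
\]
Because ${}^{(\varphi)}m_0\neq0$, this can be solved successively for ${}^{(\varphi)}\kappa_n$ in terms of ${}^{(\varphi)}m_0,\dots,{}^{(\varphi)}m_n$. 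Equivalently, Faà di Bruno's formula expresses ${}^{(\varphi)}\kappa_n$ as a polynomial in the ${}^{(\varphi)}m_k/{}^{(\varphi)}m_0$.

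\textbf{Main obstacle.} The only delicate point is the existence of a single-valued smooth logarithm, which is handled by restricting to the neighbourhood of Step 2. One should also note that ${}^{(\varphi)}\phi(0)$ need not equal $1$ if normalisation is not assumed. This is harmless, since it only shifts $K$ by a constant.
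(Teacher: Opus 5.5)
Your proof is correct and follows the same route as the paper: smoothness of ${}^{(\varphi)}\phi$ together with non-vanishing near the origin gives a smooth logarithm, and its derivatives at $0$ are combinations of the weak moments. You supply details the paper only asserts, namely the explicit branch construction via the principal logarithm on $\{|w-1|<1\}$ and the moment--cumulant recursion. That recursion is correct for the paper's convention ${}^{(\varphi)}\kappa_n={}^{(\varphi)}K^{(n)}(0)$.
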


\begin{proof}
Since ${}^{(\varphi)}\phi$ is smooth and non-vanishing near $0$, the function $\log {}^{(\varphi)}\phi$ is smooth.  Its derivatives at the origin are finite combinations of weak moments.
\end{proof}

Whenever ${}^{(\varphi)}K(t)$ is sufficiently regular at $t=0$, it admits the formal expansion
\[
{}^{(\varphi)}K(t)
=
\sum_{n=1}^\infty {}^{(\varphi)}\kappa_n \frac{(it)^n}{n!},
\qquad
{}^{(\varphi)}\phi(t)
=
\exp\left\{
\sum_{n=1}^\infty {}^{(\varphi)}\kappa_n \frac{(it)^n}{n!}
\right\}.
\]
This should be interpreted as a formal power series unless analyticity of ${}^{(\varphi)}K$ near the origin is established.  The second-order truncation,
\[
\log {}^{(\varphi)}\phi(u)
=
i{}^{(\varphi)}\kappa_1 u - \frac{{}^{(\varphi)}\kappa_2}{2}u^2 + o(u^2),
\qquad u \to 0,
\]
follows from smoothness and ${}^{(\varphi)}\phi(0)=1$, and is the key ingredient in the proof of the weak CLT (Section~\ref{sec:clt}).

\subsection{Relation to classical transforms}
\label{subsec:cumulants_classical}

For a classical density $f \in L^1(\R)$ and kernel $\varphi$ approximating the constant function~$1$,
\[
{}^{(\varphi)}\phi(t)
=
\int_{\R} e^{itx} f(x)\varphi(x)\,dx
\;\approx\;
\int_{\R} e^{itx} f(x)\,dx,
\]
recovering the classical characteristic function~\cite{Lukacs1970}.  Thus ${}^{(\varphi)}\phi$ is a regularised version of the classical one, and weak cumulants converge to classical cumulants (when they exist) as $\varphi\to 1$.

\section{Examples}
\label{sec:examples}

\subsection{Student's \texorpdfstring{$t$}{t} distributions: classical failure and weak recovery}
\label{subsec:examples_t}

The Student $t_\nu$ family provides the canonical illustration: classically, no moment of order $\ge\nu$ exists.  In the weak framework, all moments ${}^{(\varphi)}m_k$ exist for every $k$ and every $\nu$.

For the standard Cauchy distribution ($\nu = 1$) with Gaussian kernel $\varphi(x) = e^{-x^2/2}$, the weak moments are
\[
{}^{(\varphi)}m_n = \int_{-\infty}^{\infty} \frac{x^n}{\pi(1+x^2)}\,e^{-x^2/2}\,dx.
\]
All odd weak moments vanish by symmetry.  The zeroth moment admits the
closed form ${}^{(\varphi)}m_0 = e^{1/2}\operatorname{erfc}(1/\sqrt{2})
\approx 0.523$.  To see this, use the partial-fraction identity
$(\pi(1+x^2))^{-1} = (2\pi i)^{-1}(1/(x-i) - 1/(x+i))$ and complete
the square in each integral $\int e^{-x^2/2}/(x\mp i)\,dx$; the result
reduces to the complementary error function via the standard formula
$\int_0^\infty e^{-u^2}\,du = \sqrt{\pi}/2$.  For the second moment,
integration by parts or the identity
$x^2/(1+x^2) = 1 - 1/(1+x^2)$ gives
${}^{(\varphi)}m_2 = \sqrt{2/\pi} - {}^{(\varphi)}m_0 \approx 0.275$.
These values are finite and explicitly computable despite the classical
second moment not existing.

\begin{remark}
The normalisation ${}^{(\varphi)}m_0 \neq 1$ reflects the fact that $\varphi\neq 1$; one may work with the normalised pair $(T_1,\varphi/{}^{(\varphi)}m_0)$ if desired.  The numerical values of the weak moments depend on the kernel bandwidth, illustrating the regularisation-dependence emphasised throughout this paper.
\end{remark}

\subsection{Stable distributions and regularisation of transforms}
\label{subsec:examples_stable}

Let $X$ be a symmetric $\alpha$-stable random variable with $0<\alpha\le 2$, specified \emph{operationally} by its characteristic function $\exp(-c|t|^\alpha)$.  Apart from the Gaussian ($\alpha=2$), Cauchy ($\alpha=1$), and L\'evy ($\alpha=\tfrac12$) cases, the density has no closed form; the classical absolute moments of order $\ge\alpha$ diverge (no variance for any $\alpha<2$, no mean once $\alpha\le1$); and for $0<\alpha<2$ the classical characteristic function fails to be infinitely differentiable at $t=0$.  The law is nonetheless a bona fide tempered distribution $T$, so the framework applies with no density in sight.

In the weak framework, ${}^{(\varphi)}\phi(t) = \langle T, e^{itx}\varphi(x)\rangle$ is smooth in $t$ by construction, and all weak moments and cumulants exist---each $\langle T,x^n\varphi\rangle=\int x^n\varphi\,d\mu$ being finite because $x^n\varphi\in\mathcal{S}(\R)$, irrespective of~$\alpha$.  With a Gaussian kernel $\varphi(x)=e^{-x^2/(2\sigma^2)}$ the weak characteristic function is, up to a constant, the convolution of the stable characteristic function with a Gaussian,
\[
   {}^{(\varphi)}\phi(t)\;\propto\;\bigl(\widehat{\varphi}\ast e^{-c|\cdot|^{\alpha}}\bigr)(t),
\]
a \emph{generalised Voigt profile}, from which derivatives at the origin---and hence all weak cumulants---are read.  For $\alpha=1$ this is the Gaussian--Lorentzian (Voigt) convolution, expressible through the Faddeeva function and reducing to the closed-form $\operatorname{erfc}$ values obtained for the Cauchy law in Section~\ref{subsec:examples_t} and in the companion paper~\cite{LabouriauA2}.  This provides a consistent way to extend moment- and cumulant-based methods to stable distributions, for which neither a closed-form density nor any moment of order $\ge\alpha$ exists classically.

\subsection{Hyperbolic models}
\label{subsec:examples_hyperbolic}

The normal inverse Gaussian distribution $\mathrm{NIG}(\alpha,\beta,\mu,\delta)$ has density involving a modified Bessel function~$K_1$ and tails decaying as $|x|^{-3/2}\exp(-\alpha|x|)$---faster than polynomial but slower than Gaussian.  In the classical setting, all moments exist.  In the weak framework with a Gaussian kernel, the product $f(x)\varphi(x)$ is super-exponentially integrable, weak moments converge rapidly, and the weak characteristic function has super-exponential decay in~$t$, providing improved numerical stability for transform-based inference.  As $\varphi\to 1$, the weak cumulants converge to their classical counterparts.  More generally, the generalised hyperbolic family is accommodated by the same device.

\begin{remark}
The examples above span three regimes: \emph{complete failure} of classical moments (Cauchy, general stable), \emph{partial failure} (Student $t_\nu$ with moderate~$\nu$), and \emph{no failure but improved numerics} (NIG, generalised hyperbolic).  Further examples---elliptically contoured distributions and exponential dispersion models---fit naturally into the framework; we omit the details for brevity, noting only that the multivariate case requires tensor weak moments ${}^{(\varphi)}m_\alpha = \langle T, x^\alpha\varphi(x)\rangle$ indexed by multi-indices.
\end{remark}

\section{The weak moment problem}
\label{sec:moment_problem}

This section contains the main theoretical results of the paper.  We show that the weak moment problem has a fundamentally different character from its classical counterpart: existence is unconditional, and uniqueness is governed entirely by the kernel.  We establish a hierarchy of three uniqueness theorems---for positive Schwartz kernels with an exponential tail bound via Carleman's condition, for kernels with exponential-type decay via Denjoy--Carleman quasi-analyticity, and for Gaussian kernels via completeness of Hermite functions---and show that positivity of the kernel is both sufficient and necessary for uniqueness.

\subsection{Formulation: existence vs uniqueness}
\label{subsec:moment_formulation}

Let $(T,\varphi)$ be a probability pair with weak moments ${}^{(\varphi)}m_n = \langle T, x^n\varphi(x)\rangle$ for $n\ge 0$.  The \emph{weak moment problem} asks: given a sequence $\{m_n\}_{n\ge 0}$, does there exist a pair $(T,\varphi)$ with ${}^{(\varphi)}m_n = m_n$ for all~$n$?  And if so, is the pair uniquely determined?

In contrast with the classical Hamburger moment problem~\cite{Shohat1943}, existence in the weak setting is automatic: for any $T\in\mathcal{S}'(\R)$ and any $\varphi\in\mathcal{S}(\R)$, all weak moments ${}^{(\varphi)}m_n$ exist (Proposition~\ref{prop:moments-exist}).  Thus the weak moment problem shifts the focus entirely to \emph{uniqueness}: can two different distributions $T_1\ne T_2$ produce identical weak moment sequences with respect to the same kernel?

In general, uniqueness fails: both the distributional component and the kernel contribute to the moment sequence, and different pairs may produce the same moments.  If, however, the kernel is \emph{fixed a priori}, the question reduces to identifying $T$ from the sequence $\{\langle T, x^n\varphi(x)\rangle\}_{n\ge 0}$.  This is possible if and only if the family $\{x^n\varphi(x):n\ge 0\}$ is \emph{total} (i.e., has dense span) in $\mathcal{S}(\R)$.

The remainder of this section establishes totality, and hence uniqueness, under three progressively general conditions on the kernel.

\subsection{Uniqueness for Gaussian kernels}
\label{subsec:gauss-unique}

\begin{proposition}\label{prop:total-unique}
Let $\varphi \in \mathcal{S}(\R)$ be such that $\{x^n\varphi(x) : n \geq 0\}$ is total in $\mathcal{S}(\R)$.  If $T_1, T_2 \in \mathcal{S}'(\R)$ satisfy $\langle T_1, x^n\varphi\rangle = \langle T_2, x^n\varphi\rangle$ for all $n \geq 0$, then $T_1 = T_2$.
\end{proposition}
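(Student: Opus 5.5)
The argument is a standard density-plus-continuity one. Set $S:=T_1-T_2\in\mathcal{S}'(\R)$. By linearity of the pairing, the hypothesis gives $\langle S,x^n\varphi\rangle=0$ for every $n\ge0$. Again by linearity, $S$ then vanishes on the linear span
\[
V:=\operatorname{span}\{x^n\varphi : n\ge0\}=\{p\varphi : p \text{ a polynomial}\}\subset\mathcal{S}(\R).
\]
The goal is to show $S=0$ as an element of $\mathcal{S}'(\R)$, i.e.\ $\langle S,\chi\rangle=0$ for every $\chi\in\mathcal{S}(\R)$.

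Fix $\chi\in\mathcal{S}(\R)$. Totality of $\{x^n\varphi\}$ means that $V$ is dense in $\mathcal{S}(\R)$ for its Fréchet topology. This topology is given by the seminorms $p_{k,l}(\chi)=\sup_x |x^k\chi^{(l)}(x)|$. Since $\mathcal{S}(\R)$ is metrisable, density yields a sequence $\chi_j=p_j\varphi\in V$ with $\chi_j\to\chi$ in $\mathcal{S}(\R)$.

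Because $S$ is a tempered distribution, it is a continuous linear functional on $\mathcal{S}(\R)$ (Remark~\ref{rem:continuity}). Hence
\[
\langle S,\chi\rangle=\lim_{j\to\infty}\langle S,\chi_j\rangle=0.
\]
As $\chi$ was arbitrary, $S=0$ and therefore $T_1=T_2$.

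There is no real obstacle in this step. The only point needing care is that ``total'' must be understood in the Schwartz topology, not in some weaker topology such as $L^2$. Density in $L^2$ would not suffice, because $S$ need not extend continuously to $L^2$. With the statement's reading (dense span in $\mathcal{S}(\R)$), the Hahn--Banach-free sequential argument above closes the proof. Equivalently, one may note that the annihilator of a dense subspace in the dual of a locally convex space is trivial. The substantive work, namely verifying totality for specific kernels (Gaussian via Hermite functions, etc.), is deferred to the subsequent theorems; this proposition only converts totality into uniqueness.
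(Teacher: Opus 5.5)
Your proof is correct and follows the paper's argument: set $S=T_1-T_2$, note that it vanishes on the span of $\{x^n\varphi\}$, and conclude $S=0$ from the continuity of $S$ on $\mathcal{S}(\R)$ and the density of that span. Your extra remarks are accurate elaborations of the same one-line proof: the sequential approximation via metrisability, and the observation that totality must hold in the Schwartz topology rather than in $L^2$.
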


\begin{proof}
Let $S = T_1 - T_2$.  By assumption, $\langle S, x^n\varphi\rangle = 0$ for all $n$.  Since $S$ is continuous on $\mathcal{S}(\R)$ and vanishes on a dense subspace, $S = 0$.
\end{proof}

\begin{theorem}\label{thm:gauss-unique}
Let $\varphi(x) = c\,e^{-ax^2}$ with $c > 0$ and $a > 0$.  Then $\{x^n\varphi(x) : n \geq 0\}$ is total in $\mathcal{S}(\R)$.  Consequently, for a fixed Gaussian kernel, the weak moment sequence $\{{}^{(\varphi)}m_n\}_{n \geq 0}$ uniquely determines $T\in\mathcal{S}'(\R)$.
\end{theorem}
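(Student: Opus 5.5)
The plan is to reduce totality to the completeness of Hermite functions in $\mathcal{S}(\R)$, and then conclude uniqueness via Proposition~\ref{prop:total-unique}.

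\textbf{Step 1: normalise the kernel.} The constant $c$ plays no role, and neither does the value of $a$. Under the dilation $x\mapsto x/\sqrt{2a}$ the span of $\{x^ne^{-ax^2}\}$ is carried onto the span of $\{x^ne^{-x^2/2}\}$. This dilation is a topological automorphism of $\mathcal{S}(\R)$, so totality is preserved. We may therefore take $\varphi(x)=e^{-x^2/2}$.

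\textbf{Step 2: identify the span.} The Hermite functions $h_k(x)=H_k(x)e^{-x^2/2}$ have $H_k$ a polynomial of degree exactly $k$. Hence
\[
\operatorname{span}\{x^n e^{-x^2/2}: n\ge0\}=\operatorname{span}\{h_k:k\ge0\}=\{p\,e^{-x^2/2}: p \text{ a polynomial}\}.
\]

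\textbf{Step 3: density in $\mathcal{S}$, the main obstacle.} We need density of this span in the Fréchet topology of $\mathcal{S}(\R)$, not merely in $L^2$. I would use the classical Hermite characterisation of $\mathcal{S}$. Let $H=-d^2/dx^2+x^2$, so that $Hh_k=(2k+1)h_k$. For $\psi\in\mathcal{S}$ with Hermite coefficients $c_k=\langle\psi,h_k\rangle$ (with the $h_k$ normalised in $L^2$), we have $H^N\psi\in\mathcal{S}\subset L^2$ and $H$ is symmetric. This gives
\[
\sum_k (2k+1)^{2N}|c_k|^2=\|H^N\psi\|_{L^2}^2<\infty\quad\text{for every } N.
\]
The seminorms $\|H^N\psi\|_{L^2}$ generate the Schwartz topology. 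Dominating $\sup|x^\alpha\partial^\beta\psi|$ by them uses Sobolev embedding together with the fact that $x$ and $d/dx$ are combinations of the ladder operators $A^{\pm}$, with $\|A^\pm h_k\|\lesssim\sqrt{k+1}$. Consequently the partial sums $\sum_{k\le K}c_kh_k$ converge to $\psi$ in every seminorm.

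If I preferred to avoid this machinery, an alternative is a duality argument. Suppose $S\in\mathcal{S}'$ annihilates every $x^ne^{-x^2/2}$. Define $F(z)=\langle S,e^{izx}e^{-x^2/2}\rangle$. The Gaussian factor makes $e^{izx}e^{-x^2/2}$ an entire $\mathcal{S}$-valued function of $z$, with the power series converging in $\mathcal{S}$. So $F$ is entire, and all its derivatives at $0$ vanish, since they are $i^n\langle S,x^ne^{-x^2/2}\rangle=0$. Hence $F\equiv0$. This says that the Fourier transform of the tempered distribution $e^{-x^2/2}S$ vanishes, so $e^{-x^2/2}S=0$. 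Dividing by the nonvanishing smooth function $e^{-x^2/2}$ in $\mathcal{D}'$ gives $S=0$, and Hahn--Banach then yields totality. In this route the delicate point is justifying convergence of the series in $\mathcal{S}$. That follows from the bound $\sup|x^\alpha\partial^\beta(x^ne^{-x^2/2})|\le C_{\alpha\beta}^{\,n}\,\sqrt{n!}$-type growth, which is summable against $|z|^n/n!$.

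\textbf{Step 4: conclusion.} With totality established, Proposition~\ref{prop:total-unique} applies directly: two tempered distributions with equal weak moment sequences for this fixed Gaussian kernel coincide. This gives the uniqueness statement of the theorem.
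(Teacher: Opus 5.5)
Your main route is the paper's proof: rescale to $e^{-x^2/2}$, identify $\operatorname{span}\{x^n e^{-x^2/2}\}$ with the Hermite span by triangularity, use convergence of Hermite expansions in $\mathcal{S}(\R)$, and conclude via Proposition~\ref{prop:total-unique}; the paper simply cites the Schauder-basis property (Reed--Simon), which your harmonic-oscillator sketch actually proves. Your alternative duality route is also correct: it is essentially the paper's proof of Theorem~\ref{thm:dc-unique} specialised to the Gaussian, with genuine analyticity of $z\mapsto\langle S,e^{izx}e^{-x^2/2}\rangle$ replacing the Denjoy--Carleman quasi-analyticity step.
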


\begin{proof}
By rescaling it suffices to treat $\varphi(x) = e^{-x^2/2}$.  The Hermite functions $h_n(x) = H_n(x)\,e^{-x^2/2}$ ($H_n$ the physicist's Hermite polynomial) form a Schauder basis for $\mathcal{S}(\R)$: every $\psi \in \mathcal{S}(\R)$ admits a unique expansion $\psi = \sum_n c_n h_n$ converging in $\mathcal{S}(\R)$ (see~\cite{reed-simon1980}; the result goes back to~\cite{Schwartz1950}).  Since $H_n$ is a polynomial of degree~$n$ with nonzero leading coefficient,
\[
  \operatorname{span}\{x^k e^{-x^2/2} : 0 \leq k \leq n\}
  = \operatorname{span}\{h_0, \ldots, h_n\}
\]
for every~$n$.  Hence $\operatorname{span}\{x^n e^{-x^2/2} : n \geq 0\} = \operatorname{span}\{h_n : n \geq 0\}$, which is dense in $\mathcal{S}(\R)$.  The conclusion follows from Proposition~\ref{prop:total-unique}.
\end{proof}

\begin{remark}[A non-dominated family]\label{rem:moving-atom}
The determinacy theorem is not confined to densities.  Consider the
one-parameter family $P_\theta=\tfrac12\delta_\theta+\tfrac12 N(0,1)$,
$\theta\in\R$.  Since $P_\theta(\{\theta\})=\tfrac12$, any measure dominating
the whole family would carry positive mass at every $\theta\in\R$, which no
$\sigma$-finite measure admits; the family is therefore \emph{non-dominated},
and no common density---hence no likelihood---exists across it.  Each
$P_\theta$ is nonetheless a finite measure, hence a tempered distribution
$T_\theta=\tfrac12\delta_\theta+\tfrac12 N(0,1)\in\mathcal{S}'$, and for a
Gaussian kernel its weak moments
$\langle T_\theta,x^n\varphi\rangle
=\tfrac12\,\theta^n\varphi(\theta)+\tfrac12\int x^n\varphi\,dN(0,1)$
are finite for every $n$ and, by Theorem~\ref{thm:gauss-unique}, determine
$T_\theta$ uniquely---atom and continuous part together.  The weak
representation thus applies verbatim to a family lying outside the dominated
setting altogether.  The corresponding inference---a consistent weak-moment
estimator of the atom location $\theta$, available despite the absence of a
likelihood---is given in Section~\ref{sec:motivating-consequence}.
\end{remark}

\subsection{Uniqueness for positive Schwartz kernels with exponential tails}
\label{subsec:positive-unique}

The Gaussian case gives uniqueness in
$\mathcal{S}'(\R)$.  For statistical applications---where distributions
are typically induced by densities---a more general result holds.

\begin{theorem}\label{thm:positive-kernel-unique}
Let $\varphi \in \mathcal{S}(\R)$ with $\varphi(x) > 0$ for all~$x$, and suppose that there exist constants $a > 0$ and $C > 0$ such that
\begin{equation}\label{eq:exp-tail-bound}
  \varphi(x) \leq C\,e^{-a|x|}
  \qquad \text{for all } x \in \R.
\end{equation}
Let $f_1, f_2$ be measurable functions with
$f_1 - f_2 \in L^2(\R, \varphi(x)\,dx)$.  If
\[
  \int_{\R} x^n f_1(x)\varphi(x)\,dx
  = \int_{\R} x^n f_2(x)\varphi(x)\,dx
  \qquad \text{for all } n \geq 0,
\]
then $f_1 = f_2$ almost everywhere.
\end{theorem}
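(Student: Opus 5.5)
The plan is to reduce the statement to a Fourier-uniqueness argument for the single function $h:=(f_1-f_2)\varphi$, using the exponential tail bound~\eqref{eq:exp-tail-bound} to make the Fourier transform of $h$ analytic in a strip. This is the analytic form of the Carleman/Riesz criterion: polynomials are dense in $L^2(\R,\varphi\,dx)$ because the measure $\varphi\,dx$ has exponential moments. Set $g:=f_1-f_2\in L^2(\R,\varphi\,dx)$. The hypothesis then reads $\int x^n g(x)\varphi(x)\,dx=0$ for all $n\ge0$.

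\textbf{Step 1 (integrability with exponential weights).} For $0\le b<a/2$, Cauchy--Schwarz with respect to the measure $\varphi\,dx$ gives
\[
\int_\R |g(x)|\,e^{b|x|}\varphi(x)\,dx\;\le\;\|g\|_{L^2(\varphi\,dx)}\Bigl(\int_\R e^{2b|x|}\varphi(x)\,dx\Bigr)^{1/2}.
\]
By~\eqref{eq:exp-tail-bound} the right-hand side is at most $\|g\|_{L^2(\varphi\,dx)}\bigl(C\int e^{(2b-a)|x|}dx\bigr)^{1/2}<\infty$. Taking $b=0$ shows in particular that $h=g\varphi\in L^1(\R)$, so all the moments in the hypothesis are absolutely convergent. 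This also shows that they are meaningful.

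\textbf{Step 2 (analytic extension).} Define $F(z):=\int_\R e^{izx}h(x)\,dx$ on the strip $\Sigma:=\{z:|\operatorname{Im}z|<a/2\}$. On any closed substrip $|\operatorname{Im}z|\le b<a/2$ the integrand is dominated by $|g|e^{b|x|}\varphi\in L^1$, by Step~1. Hence $F$ is continuous on $\Sigma$. It is holomorphic there, by Morera's theorem with Fubini, or equivalently by differentiating under the integral sign, using $|x|^ne^{b|x|}\le C_{n,\varepsilon}e^{(b+\varepsilon)|x|}$. The same domination gives $F^{(n)}(0)=i^n\int x^n h(x)\,dx=0$ for every $n$. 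A holomorphic function on the connected domain $\Sigma$ with all derivatives vanishing at $0$ is identically zero, so $F\equiv0$ on $\Sigma$, and in particular on $\R$.

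\textbf{Step 3 (conclusion).} $F|_\R$ is the Fourier transform of $h\in L^1(\R)$. By injectivity of the Fourier transform on $L^1$ (or on $\mathcal{S}'$), $h=0$ a.e. Since $\varphi(x)>0$ everywhere, $g=h/\varphi=0$ a.e., that is, $f_1=f_2$ almost everywhere.

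The only genuinely delicate point is Step~2: justifying holomorphy and term-by-term differentiation at $z=0$. This is exactly where both hypotheses are used. The $L^2(\varphi\,dx)$ assumption on $g$ is converted, via Cauchy--Schwarz, into $L^1$ control against $e^{b|x|}\varphi$, and this is only possible because $\varphi$ itself decays exponentially. The Schwartz property alone would not suffice: it controls polynomial but not exponential weights. I would handle it with the explicit domination above and a dominated-convergence argument for difference quotients.

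An alternative route is to show that polynomials are dense in $L^2(\varphi\,dx)$ via Carleman's condition $\sum m_{2n}^{-1/(2n)}=\infty$ for the measure $\varphi\,dx$, whose moments satisfy $m_{2n}\le C'(2n)!/a^{2n}$ by~\eqref{eq:exp-tail-bound}. Then $g\perp$ all polynomials in $L^2(\varphi\,dx)$ forces $g=0$. I would mention this equivalence but write the proof via the strip argument, since it is self-contained.
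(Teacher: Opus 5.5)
Your proof is correct, but it follows a different route from the paper. The paper never uses the Fourier transform. It bounds the moments of the weight, $\mu_{2n}=\int x^{2n}\varphi\,dx\le 2C(2n)!/a^{2n+1}$, and deduces Carleman's condition $\sum\mu_{2n}^{-1/(2n)}=\infty$ for the measure $\varphi\,dx$. It then quotes from Shohat--Tamarkin that polynomials are dense in $L^2(\R,\varphi\,dx)$, and concludes because $g$ is orthogonal to every polynomial there. This is essentially the alternative route you sketch at the end.

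That argument is short on the page but outsources all the analysis to the Carleman/M.~Riesz theory: Carleman's condition gives determinacy of $\varphi\,dx$, and Riesz's theorem turns determinacy into $L^2$-density of polynomials. In exchange, it applies verbatim to any positive weight satisfying Carleman's condition. That includes weights slightly heavier-tailed than exponential with no exponential moment, where your argument has no strip of holomorphy to work in.

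Your argument is the standard self-contained proof of $L^2$-density of polynomials for measures with an exponential moment, specialised to the single function $g$. It uses only Cauchy--Schwarz, holomorphy under the integral sign, the identity theorem, and Fourier injectivity on $L^1$. It also shows exactly where each hypothesis enters. The $L^2(\R,\varphi\,dx)$ assumption is used only to obtain $g\,e^{b|x|}\varphi\in L^1$ for some $b>0$. Replacing Cauchy--Schwarz by H\"older, the same proof covers $g\in L^p(\R,\varphi\,dx)$ for any $p>1$, which the Hilbert-space argument does not give directly.
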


\begin{proof}
Set $g = f_1 - f_2 \in L^2(\R,\varphi\,dx)$.  Then $\int x^n g(x)\varphi(x)\,dx = 0$ for all $n$.

We show polynomials are dense in $L^2(\R, \varphi\,dx)$ via Carleman's condition~\cite{Shohat1943}:
\begin{equation}\label{eq:carleman}
  \sum_{n=1}^{\infty} \mu_{2n}^{-1/(2n)} = \infty,
  \qquad
  \mu_{2n} = \int_{\R} x^{2n}\varphi(x)\,dx.
\end{equation}
By hypothesis~\eqref{eq:exp-tail-bound}, $\varphi(x) \leq C\,e^{-a|x|}$ for some fixed $a > 0$ and $C > 0$.  Therefore
\[
  \mu_{2n} \leq 2C \int_0^\infty x^{2n} e^{-ax}\,dx = \frac{2C\,(2n)!}{a^{2n+1}}.
\]
By Stirling's approximation, $(2n)!^{1/(2n)} \sim 2n/e$, so $\mu_{2n}^{1/(2n)} = O(n)$ and $\sum \mu_{2n}^{-1/(2n)} \ge c\sum n^{-1} = \infty$.  Thus Carleman's condition holds: polynomials are dense in $L^2(\R,\varphi\,dx)$.

Since $g$ is orthogonal to every polynomial, $g = 0$ a.e.
\end{proof}

\begin{remark}\label{rem:carleman-automatic}
The exponential tail bound~\eqref{eq:exp-tail-bound} is essential: Schwartz functions decay faster than any \emph{polynomial}, but not necessarily faster than every exponential.  Indeed, $\varphi(x) = e^{-(1+x^2)^{1/4}}$ belongs to $\mathcal{S}(\R)$ and is everywhere positive, yet decays only at a stretched-exponential (sub-exponential) rate; for this kernel the moments $\mu_{2n}$ grow as $(4n+1)!$,\footnote{Since $(1+x^2)^{1/4}\sim|x|^{1/2}$ at infinity, the substitution $u=\sqrt{x}$ gives $\mu_{2n}=\int_\R x^{2n}e^{-(1+x^2)^{1/4}}\,dx\sim 2\int_0^\infty u^{4n+1}e^{-u}\,du=2\,(4n+1)!$; by Stirling $\mu_{2n}^{1/(2n)}\asymp n^{2}$, so $\sum_n\mu_{2n}^{-1/(2n)}\asymp\sum_n n^{-2}<\infty$, i.e.\ Carleman's series converges---whereas determinacy would require it to diverge.  Indeterminacy is certified separately by Krein's condition: $-\log\varphi(x)=(1+x^2)^{1/4}$ gives $\int_\R\frac{-\log\varphi(x)}{1+x^2}\,dx<\infty$.} Carleman's condition fails, and the Hamburger moment problem for the measure $\varphi(x)\,dx$ is indeterminate (as confirmed by Krein's criterion~\cite{Stoyanov2000Krein}).  Thus the hypothesis~\eqref{eq:exp-tail-bound} cannot be dropped.  In practice the condition is mild: all Gaussian and super-Gaussian kernels satisfy it, as does any kernel of the form $c\,e^{-Q(x)}$ with $Q(x)\to\infty$ at least linearly.  Note that~\eqref{eq:exp-tail-bound} is a \emph{tail bound} on a smooth kernel, not a prescription for the functional form of~$\varphi$ (the function $e^{-a|x|}$ itself is not smooth at the origin).  The condition $f_1 - f_2 \in L^2(\R,\varphi\,dx)$ remains mild: since $\varphi$ is bounded, any $L^2(\R)$ function qualifies, covering virtually all standard parametric families.
\end{remark}

\begin{remark}[Density of exponentially decaying kernels]
\label{rem:dense-exp-kernels}
The exponential-tail hypothesis of Theorem~\ref{thm:positive-kernel-unique} is a genuine additional
assumption and is not implied by the Schwartz condition alone.
Nevertheless, the class of positive Schwartz kernels satisfying
\[
   \varphi(x)\le C e^{-a|x|}
\]
for some constants $a,C>0$ is dense in the space of positive Schwartz
functions $\mathcal S_+(\mathbb R)$ with respect to the usual Schwartz
topology.

Indeed, let $\varphi\in\mathcal S_+(\mathbb R)$ and let
$\chi_R\in C_c^\infty(\mathbb R)$ be a smooth cutoff satisfying
$\chi_R(x)=1$ for $|x|\le R$.  Define
\[
   \varphi_R(x)
   =
   \chi_R(x)\varphi(x)
   + \varepsilon_R e^{-x^2},
\]
where $\varepsilon_R>0$ and $\varepsilon_R\to0$ sufficiently rapidly.
Then $\varphi_R>0$ and $\varphi_R\in\mathcal S(\mathbb R)$ for all~$R$.
Moreover, $\varphi_R$ has Gaussian tails and therefore satisfies the
hypothesis of Theorem~\ref{thm:positive-kernel-unique}, while
\[
   \varphi_R \longrightarrow \varphi
   \qquad\text{in }\mathcal S(\mathbb R)
\]
as $R\to\infty$.

Thus the exponentially decaying kernels form a dense subclass of
positive Schwartz kernels, even though general positive Schwartz kernels
may exhibit sub-exponential tails and moment indeterminacy.
\end{remark}

\subsection{Uniqueness via Denjoy--Carleman theory}
\label{subsec:dc-unique}

Theorem~\ref{thm:positive-kernel-unique} gives uniqueness among square-integrable densities.  Can uniqueness be extended to all of~$\mathcal{S}'(\R)$ for kernels beyond the Gaussian?  The answer is yes, under a decay condition on the kernel, via quasi-analytic classes.

\begin{definition}\label{def:exp-type-decay}
Let $\beta \geq 1$.  A function $\varphi \in \mathcal{S}(\R)$ satisfies an \emph{exponential-type decay condition of order~$\beta$} if there exist $A > 0$ and, for each $m \geq 0$, a constant $C_m > 0$ such that
\begin{equation}\label{eq:gevrey-decay}
  \sup_{x \in \R}\,
  (1 + |x|)^k \,|\varphi^{(m)}(x)|
  \leq C_m \, A^k \, (k!)^{1/\beta}
  \qquad \text{for all } k, m \geq 0.
\end{equation}
\end{definition}

This is a Schwartz-space analogue of the Gevrey class~$G^\beta(\R)$.
Gaussian kernels satisfy it with $\beta = 2$; more generally, any
super-Gaussian kernel $\varphi(x) = c\,e^{-a|x|^\alpha}$ with
$\alpha > 1$ satisfies it with $\beta = \alpha/(\alpha - 1)$.

\begin{theorem}\label{thm:dc-unique}
Let $\varphi \in \mathcal{S}(\R)$ with $\varphi(x) > 0$ for all~$x$,
satisfying the exponential-type decay condition of order~$\beta \geq 1$.  If
$T_1, T_2 \in \mathcal{S}'(\R)$ satisfy
$\langle T_1, x^n\varphi\rangle = \langle T_2, x^n\varphi\rangle$
for all $n \geq 0$, then $T_1 = T_2$.
\end{theorem}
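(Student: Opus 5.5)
By Proposition~\ref{prop:total-unique}, it suffices to show that $\{x^n\varphi:n\ge0\}$ is total in $\mathcal{S}(\R)$. By Hahn--Banach, totality is equivalent to the following statement: if $S\in\mathcal{S}'(\R)$ satisfies $\langle S,x^n\varphi\rangle=0$ for all $n$, then $S=0$. So we fix such an $S$, with $S=T_1-T_2$, and study the function
\[
 F(t):=\langle S,e^{itx}\varphi(x)\rangle ,
\]
which is the weak characteristic function of $(S,\varphi)$. By Proposition~\ref{prop:wcf-smooth}, $F$ is $C^\infty$ with $F^{(n)}(t)=\langle S,(ix)^ne^{itx}\varphi\rangle$. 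By hypothesis $F^{(n)}(0)=i^n\langle S,x^n\varphi\rangle=0$ for every $n$.

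The plan is to show that $F$ lies in a quasi-analytic Denjoy--Carleman class. If so, the vanishing of all derivatives at $0$ forces $F\equiv0$. Now $F\equiv0$ says that the tempered distribution $S\varphi$ (well defined, since $\varphi\in\mathcal{S}$ is a multiplier on $\mathcal{S}'$ only if it is $O_M$, which holds) has vanishing Fourier transform. Hence $S\varphi=0$. Since $\varphi>0$ is smooth, $1/\varphi\in C^\infty$, and for $\chi\in C_c^\infty$ we get $\langle S,\chi\rangle=\langle S\varphi,\chi/\varphi\rangle=0$. So $S=0$ on $C_c^\infty$, and by density $S=0$ in $\mathcal{S}'$.

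The key step is the derivative bound. Since $S$ is tempered, there exist $N$ and $C$ with
\[
 |\langle S,\psi\rangle|\le C\sum_{k,m\le N}\sup_x(1+|x|)^k|\psi^{(m)}(x)| .
\]
We apply this to $\psi=x^ne^{itx}\varphi$. Expanding $\partial^m$ by Leibniz produces terms of the form $n^{\underline{j}}\,t^{\,l}\,x^{n-j}e^{itx}\varphi^{(m-j-l)}$. Each is controlled by \eqref{eq:gevrey-decay} with $k\le n+N$. This is the technical estimate the paper defers to Appendix~\ref{app:leibniz}. The outcome should be, locally uniformly in $t$,
\[
 |F^{(n)}(t)|\le C' B^n\,(n!)^{1/\beta}\,n^{c},
\]
with $C',B,c$ independent of $n$.

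For $\beta\ge1$ we have $M_n=(n!)^{1/\beta}\le n!$ up to geometric and polynomial factors. Therefore $\sum_n M_n^{-1/n}\gtrsim\sum_n n^{-1/\beta}=\infty$, and the Denjoy--Carleman theorem gives quasi-analyticity of the class $C\{M_n\}$ on $\R$. For $\beta>1$, $F$ is in fact entire of finite order, and the conclusion is then immediate from the identity theorem.

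I expect the main obstacle to be the Leibniz bookkeeping. The constants $C_m$ in \eqref{eq:gevrey-decay} depend on $m$, but only $m\le N$ occurs, so this is harmless. The factors $n^{\underline{j}}$ with $j\le N$ contribute only a polynomial in $n$. The factorial $((n+N)!)^{1/\beta}$ must be absorbed into $B^n(n!)^{1/\beta}$ times a polynomial, which holds since $(n+N)!\le n!\,(n+N)^N$. Once this bound holds uniformly for $t$ in compacts, the quasi-analytic uniqueness argument goes through on each interval, and connectedness then propagates $F\equiv0$ from $0$ to all of $\R$.
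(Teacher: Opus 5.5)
Your proposal is correct and follows the paper's proof essentially step for step. It uses the weak characteristic function $\psi(t)=\langle S,e^{itx}\varphi\rangle$ of $S=T_1-T_2$, then the seminorm--Leibniz estimate $|\psi^{(n)}(t)|\le C(n+1)^{c}B^n(n!)^{1/\beta}$ on compact $t$-sets (as in Appendix~\ref{app:leibniz}, with the same absorption $(n+N)!\le (n+N)^N n!$), then Denjoy--Carleman quasi-analyticity since $\sum n^{-1/\beta}=\infty$, then Fourier injectivity to get $S\varphi=0$, and finally division by $\varphi>0$ on $C_c^\infty$ followed by density. The opening Hahn--Banach reduction to totality is a harmless detour, because what you then prove is exactly the annihilator statement $S=0$ that the paper proves directly.
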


\begin{proof}
Set $S = T_1 - T_2 \in \mathcal{S}'(\R)$, so that $\langle S, x^n\varphi\rangle = 0$ for all $n$.  The proof proceeds in three parts.

\medskip
\noindent\textit{Part~1: Derivative bounds for the weak characteristic function.}
Define $\psi(t) = \langle S, e^{itx}\varphi(x)\rangle$.  Since $e^{itx}\varphi(x) \in \mathcal{S}(\R)$ for every $t$ and the map $t \mapsto e^{itx}\varphi(x)$ is smooth into $\mathcal{S}(\R)$, the function $\psi$ is smooth with
\[
  \psi^{(n)}(t) = \langle S, (ix)^n e^{itx}\varphi(x)\rangle.
\]
In particular, $\psi^{(n)}(0) = i^n\langle S, x^n\varphi\rangle = 0$ for all $n$.

Since $S \in \mathcal{S}'(\R)$, there exist $N \in \N$ and $C' > 0$ such that $|\langle S, f\rangle| \leq C'\sum_{|\alpha|+|\gamma|\le N}\sup_x |x^\alpha f^{(\gamma)}(x)|$ for all $f \in \mathcal{S}(\R)$.  Applying this to $(ix)^n e^{itx}\varphi(x)$ and using the decay condition~\eqref{eq:gevrey-decay} via a Leibniz-rule argument (see Appendix~\ref{app:leibniz} for the detailed estimate), one obtains: for each $R > 0$ there exist $B, C'' > 0$ such that
\begin{equation}\label{eq:psi-gevrey}
  |\psi^{(n)}(t)|
  \leq C'' (n+1)^N B^n (n!)^{1/\beta}
  \qquad \text{for all } |t| \leq R,\; n \geq 0.
\end{equation}

\medskip
\noindent\textit{Part~2: Quasi-analyticity via the Denjoy--Carleman theorem.}
Define $M_n = B^n(n!)^{1/\beta}$.  The polynomial prefactor $(n+1)^N$ in~\eqref{eq:psi-gevrey} can be absorbed into a modified sequence $M_n' = (n+1)^N M_n$ without affecting the divergence of $\sum (M_n')^{-1/n}$, since $(n+1)^{N/n}\to 1$.  We verify the Denjoy--Carleman condition~\cite[Thm.~1.3.8]{Hormander1990}:
\[
  M_n^{-1/n} = B^{-1}(n!)^{-1/(\beta n)}.
\]
By Stirling, $(n!)^{1/n}\sim n/e$, so $M_n^{-1/n}\sim (B^{-1}e^{1/\beta})n^{-1/\beta}$.  Since $\beta \geq 1$, the series $\sum n^{-1/\beta}$ diverges, so $\sum M_n^{-1/n}=\infty$ and $C\{M_n\}$ is quasi-analytic.

Since all derivatives of $\psi$ vanish at $t=0$ and $\psi\in C\{M_n\}$ on $[-R,R]$, quasi-analyticity gives $\psi\equiv 0$ on $[-R,R]$.  As $R$ is arbitrary, $\psi(t)=0$ for all $t$.

\medskip
\noindent\textit{Part~3: From $\psi\equiv 0$ to $S = 0$.}
We have $\langle S, e^{itx}\varphi(x)\rangle = 0$ for all $t$, so the Fourier transform of $S\varphi$ vanishes identically.  By injectivity of the Fourier transform on $\mathcal{S}'(\R)$, $S\varphi = 0$.  It remains to show $S\varphi = 0 \Rightarrow S = 0$.  For any $\psi \in \mathcal{D}(\R)$, the quotient $\psi/\varphi$ is smooth with compact support (since $\varphi > 0$), so $\psi/\varphi \in \mathcal{D}(\R) \subset \mathcal{S}(\R)$.  Then $\langle S, \psi\rangle = \langle S, \varphi(\psi/\varphi)\rangle = \langle S\varphi, \psi/\varphi\rangle = 0$.  Since $\mathcal{D}(\R)$ is dense in $\mathcal{S}(\R)$ and $S$ is continuous, $S = 0$.
\end{proof}

\subsection{Hierarchy of uniqueness results}
\label{subsec:hierarchy}

\begin{remark}[Hierarchy]\label{rem:hierarchy}
Theorems~\ref{thm:positive-kernel-unique}, \ref{thm:dc-unique}, and~\ref{thm:gauss-unique} establish three levels of uniqueness for the weak moment problem:
\begin{enumerate}
  \item \emph{Positive Schwartz kernel with exponential tail bound} (Thm.~\ref{thm:positive-kernel-unique}):
    if $\varphi(x)\le Ce^{-a|x|}$ for some $a,C>0$, the weak moments determine the density uniquely among square-integrable functions.  This is sufficient for most parametric statistical models, since all commonly used kernels (Gaussian, super-Gaussian, etc.)\ satisfy this bound.
  \item \emph{Exponential-type decay with $\beta \geq 1$} (Thm.~\ref{thm:dc-unique}):
    uniqueness in all of $\mathcal{S}'(\R)$, including singular distributions.  The proof uses the Denjoy--Carleman theorem and is sharp in $\beta$ (see below).
  \item \emph{Gaussian kernel} (Thm.~\ref{thm:gauss-unique}):
    uniqueness in $\mathcal{S}'(\R)$ via the completeness of Hermite functions---a special case of~(2) with $\beta = 2$, but with a self-contained proof that illuminates the algebraic structure.
\end{enumerate}
In all three cases, positivity of the kernel ($\varphi > 0$ everywhere) is essential.
\end{remark}

\begin{remark}[Sharpness and obstruction by zeros]\label{rem:sharpness}
The condition $\beta \geq 1$ is sharp: the Denjoy--Carleman class $C\{M_n\}$ with $M_n = B^n(n!)^{1/\beta}$ is quasi-analytic if and only if $\beta \geq 1$.  When $\beta < 1$ (sub-exponential decay), quasi-analyticity fails and uniqueness is not guaranteed by this method.  Separately, the condition $\varphi > 0$ is necessary for any uniqueness result.  If $\varphi(x_0) = 0$ for some $x_0$, then the Dirac distribution $\delta_{x_0}$ satisfies $\langle \delta_{x_0}, x^n\varphi\rangle = x_0^n\varphi(x_0) = 0$ for all~$n$, so $T$ and $T + \delta_{x_0}$ produce identical weak moment sequences.  Zeros of the kernel thus create a fundamental obstruction to identifiability.
\end{remark}

\begin{remark}[The Gaussian kernel as natural default]\label{rem:gauss-natural}
The Gaussian kernel occupies a distinguished position: it ensures rapid decay (well-defined weak moments of all orders), uniqueness (via Hermite completeness or the Denjoy--Carleman theorem with $\beta=2$), and natural compatibility with Fourier analysis.  It provides a principled default choice when identifiability is required.  Characterising which other positive Schwartz kernels yield totality of $\{x^n\varphi:n\ge 0\}$ in $\mathcal{S}(\R)$ is an open problem connected to the theory of orthogonal polynomials and weighted approximation.
\end{remark}

\subsection{Relation with the classical moment problem}
\label{subsec:classical-moment-problem}

The uniqueness results above should be interpreted in relation to the
classical moment problem and the extensive body of work on moment
determinacy (M-determinacy).  In the classical (Hamburger) setting one
asks whether the ordinary moments
\[
   m_n = \int_\R x^n\, d\mu(x)
\]
determine the measure~$\mu$ uniquely among all measures with finite
moments of all orders.  A measure for which uniqueness holds is called
\emph{M-determinate}; otherwise it is \emph{M-indeterminate}.

Two classical criteria govern the dichotomy.
Carleman's condition~\cite{Shohat1943}
\[
   \sum_{n=1}^\infty m_{2n}^{-1/(2n)} = \infty
\]
is sufficient for M-determinacy.
Krein's condition~\cite{Stoyanov2000Krein} provides a complementary test:
if a probability density $f>0$ satisfies
\[
   \int_{-\infty}^{\infty}
   \frac{-\log f(x)}{1+x^2}\,dx < \infty,
\]
then the distribution is M-indeterminate (the moment problem admits
multiple solutions).  These criteria, together with checkable
sufficient conditions for determinacy and indeterminacy developed by
Stoyanov and collaborators~\cite{Stoyanov2013,LinKopanovStoyanov2020,StoyanovInverardiTagliani2023},
form the foundation of the classical theory.

As emphasised by Berg~\cite{Berg1988} and
Stoyanov~\cite{Stoyanov2013}, M-indeterminacy is not an exotic
phenomenon.  The log-normal distribution admits both continuous and
discrete \emph{Stieltjes classes}: families of distinct measures
sharing the same moment sequence.  Similar phenomena arise for powers
and products of standard distributions and for certain discrete
distributions~\cite{LinKopanovStoyanov2020}.

\medskip

The present framework does not contradict or supersede these classical
results.  Rather, it changes the separating family from the bare
monomials $\{x^n\}$ to the kernel-weighted probes
$\{x^n\varphi(x)\}$.  This is a substantive change: a
perturbation~$h$ satisfying
\[
   \int_\R x^n h(x)f(x)\,dx = 0
   \qquad \text{for all } n \geq 0
\]
(i.e., belonging to a Stieltjes class for~$f$) need not satisfy
\[
   \int_\R x^n h(x)f(x)\varphi(x)\,dx = 0
   \qquad \text{for all } n \geq 0.
\]
The kernel may therefore separate distributions that are
M-indistinguishable by ordinary moments.  In particular, the
log-normal distribution is classically M-indeterminate, but its weak
moment problem with a Gaussian kernel is uniquely determined by
Theorem~\ref{thm:gauss-unique} (or
Theorem~\ref{thm:positive-kernel-unique}, since the Gaussian kernel
satisfies the exponential tail bound).

\medskip

This separation, however, is not automatic for arbitrary positive
Schwartz kernels.  As shown in
Remark~\ref{rem:carleman-automatic}, kernels with sub-exponential
tails (such as $\varphi(x) = e^{-(1+x^2)^{1/4}}$) can themselves
generate M-indeterminate weighted moment problems; Krein's
condition~\cite{Stoyanov2000Krein} confirms the indeterminacy in such cases.
Thus the determinacy properties of the \emph{weighted} measure
$\varphi(x)\,dx$ become central, and the classical Carleman and
Krein criteria---applied now to the weighted measure rather than to
the original distribution---govern uniqueness in the weak setting.

From the present viewpoint, classical M-indeterminate families may be
interpreted as degeneracy loci associated with the monomial test
family $\{x^n\}$.  The weak framework replaces this rigid family by a
kernel-dependent geometry of probes $\{x^n\varphi(x)\}$, recovering
uniqueness for large classes of kernels satisfying suitable
Carleman-type or quasi-analyticity conditions.  The role of the
kernel is thus analogous to that of a weight function in classical
weighted polynomial approximation theory, where the approximation
properties depend critically on the tail behaviour of the
weight~\cite{Lubinsky2007}.

\section{Multivariate extension}
\label{sec:multivariate}

The development above was carried out for $d=1$, for clarity of exposition.
We now record how the framework extends to~$\R^d$.  The definitions of
Section~\ref{subsec:preliminaries_spaces} were already stated for~$\R^d$: the
weak moments are indexed by multi-indices $\alpha\in\N_0^d$,
\[
{}^{(\varphi)}m_\alpha
\;=\;
\langle T,\,x^\alpha\varphi(x)\rangle,
\qquad
x^\alpha = x_1^{\alpha_1}\cdots x_d^{\alpha_d},
\]
and the weak characteristic function
${}^{(\varphi)}\phi(t)=\langle T,e^{i\langle t,x\rangle}\varphi(x)\rangle$
and the weak cumulants are defined verbatim.  Existence of all weak moments
(Proposition~\ref{prop:moments-exist}), the algebra of
Sections~\ref{sec:independence}--\ref{sec:cumulants} (factorisation under
independence, additivity of weak cumulants, the affine rules), the smoothness
of~${}^{(\varphi)}\phi$, and the weak central limit theorem of
Section~\ref{sec:clt} (including its kernel-uniform refinement,
Theorem~\ref{thm:weak-BE-uniform}) are \emph{dimension-free}: their proofs use
only that $x^\alpha\varphi\in\mathcal{S}(\R^d)$ and that $T$ is continuous
on~$\mathcal{S}(\R^d)$, and they transfer unchanged.  What requires genuine
care is \emph{uniqueness}.  We show that Theorem~\ref{thm:gauss-unique} extends
to~$\R^d$ with a multivariate Gaussian kernel, so that the full collection of
weak moments uniquely determines the distribution.

\begin{proposition}[Multivariate totality implies uniqueness]
\label{prop:total-unique-d}
Let $\varphi\in\mathcal{S}(\R^d)$ be such that the family
$\{x^\alpha\varphi(x):\alpha\in\N_0^d\}$ is total in
$\mathcal{S}(\R^d)$.  If
$T_1,T_2\in\mathcal{S}'(\R^d)$ satisfy
$\langle T_1,x^\alpha\varphi\rangle = \langle T_2,x^\alpha\varphi\rangle$
for all $\alpha\in\N_0^d$, then $T_1=T_2$.
\end{proposition}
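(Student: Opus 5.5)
The argument copies the one-dimensional Proposition~\ref{prop:total-unique}, with multi-indices in place of integer powers. Set $S:=T_1-T_2\in\mathcal{S}'(\R^d)$. Since $\mathcal{S}'(\R^d)$ is a vector space and the pairing is bilinear, the hypothesis gives
\[
\langle S,x^\alpha\varphi\rangle=\langle T_1,x^\alpha\varphi\rangle-\langle T_2,x^\alpha\varphi\rangle=0
\qquad\text{for every }\alpha\in\N_0^d .
\]
By linearity, $S$ then vanishes on the linear span $V:=\operatorname{span}\{x^\alpha\varphi:\alpha\in\N_0^d\}$, that is, on every function $p\varphi$ with $p$ a polynomial in $d$ variables. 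Each such $p\varphi$ lies in $\mathcal{S}(\R^d)$, since $\mathcal{S}(\R^d)$ is closed under multiplication by polynomials; this is exactly what was used in Proposition~\ref{prop:moments-exist}.

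Next we pass from $V$ to all of $\mathcal{S}(\R^d)$ by continuity. By the totality hypothesis, $V$ is dense in $\mathcal{S}(\R^d)$ for its Fréchet topology, which is generated by the seminorms $\sup_x|x^\beta\partial^\gamma f(x)|$ over $\beta,\gamma\in\N_0^d$. Fix $\psi\in\mathcal{S}(\R^d)$. Because the topology is metrisable, there is a sequence $\psi_j\in V$ with $\psi_j\to\psi$ in $\mathcal{S}(\R^d)$; a net would do equally well if one prefers not to invoke metrisability.

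A tempered distribution is by definition a continuous linear functional on $\mathcal{S}(\R^d)$, so
\[
\langle S,\psi\rangle=\lim_{j\to\infty}\langle S,\psi_j\rangle=0 .
\]
Since $\psi$ was arbitrary, $S=0$ as an element of $\mathcal{S}'(\R^d)$, and hence $T_1=T_2$.

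No step here is a genuine obstacle: the whole analytic burden sits in the totality hypothesis. The only point to handle carefully is that density and continuity refer to the same Schwartz topology on $\mathcal{S}(\R^d)$, and not, say, to an $L^2$ topology. This is what separates the present statement from the $L^2$-type argument of Theorem~\ref{thm:positive-kernel-unique}. The real work arises later, when totality has to be verified for a concrete kernel such as a multivariate Gaussian. There I would use that the tensor-product Hermite functions $h_{\alpha_1}(x_1)\cdots h_{\alpha_d}(x_d)$ form a Schauder basis of $\mathcal{S}(\R^d)$, and that their span coincides with $V$, as in the proof of Theorem~\ref{thm:gauss-unique}.
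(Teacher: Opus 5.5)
Your proof is correct and is exactly the paper's argument: $S=T_1-T_2$ is a continuous linear functional on $\mathcal{S}(\R^d)$ that vanishes on the dense span of $\{x^\alpha\varphi:\alpha\in\N_0^d\}$, hence $S=0$. Your point that density and continuity must both refer to the Schwartz topology, and not an $L^2$ one, is right, but it does not change the route.
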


\begin{proof}
The argument is the same as for Proposition~\ref{prop:total-unique}: $S =
T_1-T_2$ vanishes on a dense subspace of $\mathcal{S}(\R^d)$ and is
continuous, hence $S=0$.
\end{proof}

\begin{theorem}[Multivariate Gaussian uniqueness]
\label{thm:gauss-unique-d}
Let $A\in\R^{d\times d}$ be symmetric positive definite, $c>0$, and
\[
\varphi(x) = c\,\exp\!\bigl(-\tfrac{1}{2}\,x^{\!\top\!}Ax\bigr).
\]
Then $\{x^\alpha\varphi(x):\alpha\in\N_0^d\}$ is total in
$\mathcal{S}(\R^d)$.  Consequently, for a fixed multivariate Gaussian
kernel, the weak moment array
$\{{}^{(\varphi)}m_\alpha\}_{\alpha\in\N_0^d}$ uniquely determines
$T\in\mathcal{S}'(\R^d)$.
\end{theorem}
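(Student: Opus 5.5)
The plan is to reduce to the standard kernel $e^{-|y|^2/2}$ by a linear change of variables, and then use the tensor-product Hermite basis of $\mathcal{S}(\R^d)$. The argument mirrors the proof of Theorem~\ref{thm:gauss-unique}.

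\emph{Step 1: reduction.} Write $A=L^{\top}L$ with $L$ invertible, for instance $L=A^{1/2}$. The pullback $\Phi_L:\psi\mapsto\psi(L^{-1}\cdot)$ is a topological isomorphism of $\mathcal{S}(\R^d)$ onto itself, because linear changes of variables preserve both the Schwartz seminorms and polynomial degree. It carries $x^\alpha\varphi(x)$ to $c\,(L^{-1}y)^\alpha e^{-|y|^2/2}$. For each $n$, the set $\{(L^{-1}y)^\alpha:|\alpha|\le n\}$ spans the same space as $\{y^\alpha:|\alpha|\le n\}$, namely all polynomials of total degree $\le n$; this holds because $y\mapsto L^{-1}y$ is an invertible linear map of the polynomial space of degree $\le n$. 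Hence $\operatorname{span}\{x^\alpha\varphi\}$ is mapped onto $\operatorname{span}\{y^\alpha e^{-|y|^2/2}\}$. Since totality is preserved under a topological isomorphism, and the constant $c>0$ is irrelevant, it suffices to treat $\varphi(y)=e^{-|y|^2/2}$.

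\emph{Step 2: Hermite basis.} Let $h_\alpha(y)=\prod_{j=1}^d h_{\alpha_j}(y_j)=H_\alpha(y)e^{-|y|^2/2}$, where $H_\alpha(y)=\prod_j H_{\alpha_j}(y_j)$. Each $H_\alpha$ is a polynomial whose top-degree part is a nonzero multiple of $y^\alpha$. A triangularity argument by total degree then gives
\[
\operatorname{span}\{y^\alpha e^{-|y|^2/2}:|\alpha|\le n\}=\operatorname{span}\{h_\alpha:|\alpha|\le n\}
\]
for every $n$, exactly as in the one-dimensional case.

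\emph{Step 3: density of Hermite functions.} This is the only substantive step. Our first choice is to cite the multivariate version of the Schwartz/Reed--Simon result: $\{h_\alpha\}$ is a Schauder basis of $\mathcal{S}(\R^d)$, and the expansion $\psi=\sum_\alpha c_\alpha h_\alpha$ converges in $\mathcal{S}(\R^d)$. The reason is that the seminorms $\|(H_{\rm osc}+d)^k\psi\|_{L^2}$, with $H_{\rm osc}=-\Delta+|y|^2$, generate the Schwartz topology. Since $h_\alpha$ is an eigenfunction with eigenvalue $2|\alpha|+d$, the coefficients $c_\alpha$ decay faster than any power of $|\alpha|$, and this yields convergence in every seminorm. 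Should a self-contained route be preferred, we would instead argue by duality. Suppose $S\in\mathcal{S}'(\R^d)$ annihilates every $y^\alpha e^{-|y|^2/2}$. Then $\psi(t)=\langle S,e^{i\langle t,y\rangle}e^{-|y|^2/2}\rangle$ extends to an entire function on $\mathbb{C}^d$, because $e^{i\langle z,y\rangle}e^{-|y|^2/2}\in\mathcal{S}$ depends holomorphically on $z$. All its Taylor coefficients at the origin vanish, so $\psi\equiv0$. Fourier injectivity then gives $Se^{-|y|^2/2}=0$, and the division argument of Part~3 of Theorem~\ref{thm:dc-unique}, carried out in $\R^d$, gives $S=0$. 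By Hahn--Banach this yields totality. Either route gives density of the span, and the uniqueness statement follows from Proposition~\ref{prop:total-unique-d}.

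We expect the main obstacle to be Step 3, specifically justifying convergence in the $\mathcal{S}(\R^d)$ topology rather than merely in $L^2$. In the duality route the corresponding point to check is the holomorphy of $z\mapsto e^{i\langle z,\cdot\rangle}e^{-|\cdot|^2/2}$ as an $\mathcal{S}$-valued map. This is routine because the Gaussian dominates $e^{|\operatorname{Im}z||y|}$ uniformly on compact sets, but it does need to be checked.
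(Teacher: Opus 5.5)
Your proof is correct and follows the paper's route: you reduce to $e^{-|y|^2/2}$ via $L=A^{1/2}$, use degree-triangularity between $\{y^\alpha e^{-|y|^2/2}\}$ and the tensor Hermite functions $h_\alpha$, and cite the Reed--Simon Schauder-basis theorem for density in $\mathcal{S}(\R^d)$. Your fallback duality argument is also valid and makes the result self-contained: it extends $z\mapsto\langle S,e^{i\langle z,y\rangle}e^{-|y|^2/2}\rangle$ to an entire function, applies Fourier injectivity, then divides by the positive kernel, which is essentially Part~3 of the proof of Theorem~\ref{thm:dc-unique} with genuine analyticity in place of quasi-analyticity.
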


\begin{proof}
The proof proceeds in three parts.

\emph{Reduction to the standard Gaussian kernel.}
Let $L = A^{1/2}$ be the positive definite square root of~$A$.  The linear
substitution $\Phi:\R^d\to\R^d$, $y = Lx$, induces a topological
isomorphism $\Phi^*:\mathcal{S}(\R^d)\to\mathcal{S}(\R^d)$ via
$(\Phi^*\psi)(x) = \psi(Lx)$, and $\varphi = c\,(\Phi^*\tilde\varphi)$ with
$\tilde\varphi(y) = e^{-|y|^2/2}$.  Since $L$ is invertible, the monomials
$(L^{-1}y)^\alpha$, $\alpha\in\N_0^d$, span all polynomials in~$y$.
Therefore, totality of $\{x^\alpha\varphi(x)\}$ in~$\mathcal{S}(\R^d)$
is equivalent to totality of
$\{p(y)\,e^{-|y|^2/2}: p\text{ polynomial}\}$ in~$\mathcal{S}(\R^d)$.
It thus suffices to prove totality for $\varphi(x)=e^{-|x|^2/2}$.

\emph{Constructing the multivariate Hermite functions.}
For each multi-index $\alpha\in\N_0^d$, define the multivariate Hermite
function
\[
h_\alpha(x)
=
\prod_{j=1}^{d} H_{\alpha_j}(x_j)\,e^{-x_j^2/2},
\]
where $H_n$ is the $n$-th physicist's Hermite polynomial.  The family
$\{h_\alpha:\alpha\in\N_0^d\}$ is a Schauder basis for
$\mathcal{S}(\R^d)$~\cite{reed-simon1980}: every
$\psi\in\mathcal{S}(\R^d)$ admits a unique expansion $\psi =
\sum_\alpha c_\alpha h_\alpha$ converging in $\mathcal{S}(\R^d)$.

\emph{Triangularity and totality.}
Since $H_n$ is a polynomial of degree~$n$ with leading coefficient~$2^n$,
the product $\prod_j H_{\alpha_j}(x_j)$ is a polynomial with leading
monomial $2^{|\alpha|}x^\alpha$ (where $|\alpha|=\alpha_1+\cdots+\alpha_d$)
plus monomials of lower degree.  In particular, for every $N\ge 0$,
\begin{equation}\label{eq:hermite-span-d}
\operatorname{span}\{x^\alpha e^{-|x|^2/2}:|\alpha|\le N\}
\;=\;
\operatorname{span}\{h_\alpha:|\alpha|\le N\}.
\end{equation}
This follows by upper triangularity: the change-of-basis matrix between the
monomial system $\{x^\alpha e^{-|x|^2/2}\}$ and $\{h_\alpha\}$, ordered by
any total ordering compatible with~$|\alpha|$, is upper triangular with
nonzero diagonal entries~$2^{|\alpha|}$.  Since
$\bigcup_{N\ge 0}\operatorname{span}\{h_\alpha:|\alpha|\le N\}$ is dense in
$\mathcal{S}(\R^d)$, so is
$\{x^\alpha e^{-|x|^2/2}:\alpha\in\N_0^d\}$.
The conclusion follows from Proposition~\ref{prop:total-unique-d}.
\end{proof}

\begin{remark}[Scope of the multivariate extension]
\label{rem:multivariate-scope}
The Carleman-type uniqueness (Theorem~\ref{thm:positive-kernel-unique})
also extends to~$\R^d$ for any positive $\varphi\in\mathcal{S}(\R^d)$
satisfying an exponential tail bound $\varphi(x)\le C\,e^{-a|x|}$,
among densities in $L^2(\R^d,\varphi\,dx)$: the argument uses the fact
that polynomials are dense in $L^2(\R^d,\varphi\,dx)$ whenever
$\varphi>0$ satisfies a multivariate Carleman condition, which holds
under the exponential tail bound.  The Denjoy--Carleman
extension (Theorem~\ref{thm:dc-unique}) to~$\R^d$ requires additional
care---in particular, the notion of quasi-analyticity in several variables
is richer---and is left for future work.  In summary, in~$\R^d$ the Gaussian
and Carleman-type uniqueness theorems hold, and with them all results of the
paper that are not specific to one dimension; only the Denjoy--Carleman
uniqueness remains open.

The multivariate Gaussian uniqueness theorem is the building block needed
for extending the framework to standard multivariate statistical models
(location-scale families, elliptically contoured distributions, etc.),
where the kernel $\varphi(x) = c\exp(-\frac{1}{2}x^{\!\top\!}Ax)$ arises
naturally through the quadratic form of the model.
\end{remark}

\section{Weak central limit theorem}
\label{sec:clt}

\subsection{Setup and normalisation}
\label{subsec:clt_setup}

Let $X_1,\dots,X_n$ be i.i.d.\ generalised random variables with common distribution--kernel pair $(T,\varphi)$, weak characteristic function ${}^{(\varphi)}\phi(t)$, and first two weak cumulants ${}^{(\varphi)}\kappa_1 \in \R$, ${}^{(\varphi)}\kappa_2 > 0$.  Their sum $S_n = X_1+\cdots+X_n$ is the generalised random variable formed by the independence construction of Section~\ref{sec:independence}, and the normalised sum is
\[
Z_n = \frac{S_n - n{}^{(\varphi)}\kappa_1}{\sqrt{n\,{}^{(\varphi)}\kappa_2}}.
\]

\subsection{Convergence}
\label{subsec:clt_convergence}

By independence, ${}^{(\varphi)}\phi_{S_n}(t) = {}^{(\varphi)}\phi(t)^n$.  For the normalised sum,
\[
\log {}^{(\varphi)}\phi_{Z_n}(t)
=
-it\frac{n{}^{(\varphi)}\kappa_1}{\sqrt{n{}^{(\varphi)}\kappa_2}}
+
n \log {}^{(\varphi)}\phi\!\left(\frac{t}{\sqrt{n{}^{(\varphi)}\kappa_2}}\right).
\]
Substituting the second-order expansion $\log {}^{(\varphi)}\phi(u) = i{}^{(\varphi)}\kappa_1 u - ({}^{(\varphi)}\kappa_2/2)u^2 + r(u)$ with $r(u)=o(u^2)$ and $u = t/\sqrt{n{}^{(\varphi)}\kappa_2}$ gives $\log {}^{(\varphi)}\phi_{Z_n}(t) = -t^2/2 + n\,r(t/\sqrt{n{}^{(\varphi)}\kappa_2})$.  Since $r(u) = o(u^2)$, the remainder vanishes as $n\to\infty$.

\begin{theorem}[Weak Central Limit Theorem]\label{thm:weak-clt}
Let $\{X_i\}$ be i.i.d.\ generalised random variables with ${}^{(\varphi)}\kappa_2 > 0$.  Then
\[
{}^{(\varphi)}\phi_{Z_n}(t) \to \exp\!\left(-\frac{t^2}{2}\right)
\qquad \text{for all } t \in \R.
\]
In particular, $Z_n$ converges in the weak transform sense to a Gaussian generalised random variable.
\end{theorem}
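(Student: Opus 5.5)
The plan is to make rigorous the heuristic computation just before the statement. First I pin down the weak characteristic function of $Z_n$. By Proposition~\ref{prop:cf-factorise}, applied inductively to the $n$-fold product pair, ${}^{(\varphi)}\phi_{S_n}(t)={}^{(\varphi)}\phi(t)^n$. The affine rules (Propositions~\ref{prop:translation} and~\ref{prop:scaling}) then give
\[
{}^{(\varphi)}\phi_{Z_n}(t)=e^{-it\sqrt{n}\,{}^{(\varphi)}\kappa_1/\sqrt{{}^{(\varphi)}\kappa_2}}\;{}^{(\varphi)}\phi\bigl(t/\sqrt{n\,{}^{(\varphi)}\kappa_2}\bigr)^n .
\]
For the translation I use the co-moving kernel convention; the phase factor is all that matters. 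I also take the normalisation ${}^{(\varphi)}\phi(0)={}^{(\varphi)}m_0=1$ from the definition of a generalised probability measure.

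Next, fix $t$ and set $u_n=t/\sqrt{n\,{}^{(\varphi)}\kappa_2}\to0$. By Proposition~\ref{prop:wcf-smooth}, ${}^{(\varphi)}\phi$ is $C^\infty$, and it equals $1$ at the origin. Hence there is $\delta>0$ with $|{}^{(\varphi)}\phi(u)-1|<1/2$ on $|u|<\delta$. On that interval I take the principal branch $K(u)=\mathrm{Log}\,{}^{(\varphi)}\phi(u)$. It is smooth with $K(0)=0$, $K'(0)=i\,{}^{(\varphi)}\kappa_1$ and $K''(0)=-{}^{(\varphi)}\kappa_2$, which is Definition~\ref{def:cumulants} written in the sign convention of the displayed expansion. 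Taylor's theorem with the Lagrange or integral remainder, using continuity of $K''$, gives $K(u)=i{}^{(\varphi)}\kappa_1u-\tfrac12{}^{(\varphi)}\kappa_2u^2+r(u)$ with $|r(u)|\le \tfrac12u^2\sup_{|s|\le|u|}|K''(s)-K''(0)|=o(u^2)$.

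For $n$ large we have $|u_n|<\delta$, so ${}^{(\varphi)}\phi(u_n)^n=\exp(nK(u_n))$ holds exactly; no branch issue arises, because the power is taken of the number itself. This gives
\[
{}^{(\varphi)}\phi_{Z_n}(t)=\exp\bigl(-t^2/2+n\,r(u_n)\bigr),
\]
and $n\,r(u_n)=\bigl(t^2/{}^{(\varphi)}\kappa_2\bigr)\cdot r(u_n)/u_n^2\to0$ for $t\ne0$. The case $t=0$ is trivial. Continuity of $\exp$ then yields the limit $e^{-t^2/2}$. The final sentence of the theorem is this convergence restated, so no extra argument is needed.

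The main obstacle is bookkeeping rather than analysis. I need the logarithm to be a genuine smooth branch near $0$, so that the cumulants are real and ${}^{(\varphi)}\kappa_2>0$ enters correctly. I also need to be sure that ${}^{(\varphi)}\kappa_1$ is real, as assumed in the setup; otherwise the centring only contributes a phase. The other delicate point is checking that the affine rules apply to the $n$-fold product pair with a consistent kernel. I will handle this by treating the kernel of $Z_n$ as whatever the factorisation and affine propositions produce, since the statement concerns only the resulting transform.
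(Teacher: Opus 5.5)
Your proposal is correct and follows the paper's proof. Both arguments use factorisation of the weak characteristic function, the translation and scaling rules to produce the phase factor and the rescaled argument $u_n$, a second-order expansion of the weak cumulant generating function, and the estimate $n\,r(u_n)\to 0$. Your version is slightly more careful than the appendix proof in two respects: you use the normalisation ${}^{(\varphi)}\phi(0)=1$, which is what actually forces the expansion to have no constant term (the appendix only invokes ${}^{(\varphi)}\phi(0)\neq 0$), and you exponentiate $nK(u_n)$ directly instead of taking a logarithm of ${}^{(\varphi)}\phi_{Z_n}(t)$, which avoids any branch issue.
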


\noindent\textit{Proof.}\enspace The calculation in Section~\ref{subsec:clt_convergence} above provides the essential argument; a self-contained proof is given in Appendix~\ref{app:proofs}.\medskip

\subsection{A quantitative refinement}
\label{subsec:clt_quatitative}

The proof of the weak central limit theorem is based on a second-order
expansion of the weak cumulant generating function near the origin.
Under one additional order of regularity, this argument yields an explicit
rate of convergence on compact \(t\)-intervals.

\begin{theorem}[Quantitative weak CLT]
\label{thm:weak-BE}
Let \(\{X_i\}\) be i.i.d. generalised random variables with weak
characteristic function \({}^{(\varphi)}\phi\), and assume that
\({}^{(\varphi)}\kappa_2 > 0\). Suppose that the weak cumulant generating
function
\[
{}^{(\varphi)}K(t) := \log {}^{(\varphi)}\phi(t)
\]
is \(C^3\) in a neighbourhood of \(0\).

Let
\[
Z_n
=
\frac{S_n - n\,{}^{(\varphi)}\kappa_1}
{\sqrt{n\,{}^{(\varphi)}\kappa_2}}.
\]

Then for every \(T>0\),
\[
\sup_{|t|\le T}
\left|
\log {}^{(\varphi)}\phi_{Z_n}(t)
+
\frac{t^2}{2}
\right|
=
O\!\left(\frac{1}{\sqrt{n}}\right),
\]
and consequently,
\[
\sup_{|t|\le T}
\left|
{}^{(\varphi)}\phi_{Z_n}(t)
-
e^{-t^2/2}
\right|
=
O\!\left(\frac{1}{\sqrt{n}}\right).
\]
\end{theorem}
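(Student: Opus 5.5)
The argument is a third-order Taylor expansion of ${}^{(\varphi)}K$ with an explicit remainder, fed into the exact identity of Section~\ref{subsec:clt_convergence}. Write $\kappa_j:={}^{(\varphi)}\kappa_j$ and $\sigma:=\sqrt{\kappa_2}$. We need ${}^{(\varphi)}\phi(0)=1$, and the computation in Section~\ref{subsec:clt_convergence} already assumes this normalisation. Then ${}^{(\varphi)}K(0)=0$. By Definition~\ref{def:cumulants} and the expansion of Section~\ref{subsec:cumulants_cgf}, ${}^{(\varphi)}K'(0)=i\kappa_1$ and ${}^{(\varphi)}K''(0)=-\kappa_2$.

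Fix $\delta>0$ such that ${}^{(\varphi)}\phi$ does not vanish on $[-\delta,\delta]$ and ${}^{(\varphi)}K$ is $C^3$ there. Set $M_3:=\sup_{|u|\le\delta}|{}^{(\varphi)}K'''(u)|<\infty$. Taylor's theorem with integral remainder, applied to the real and imaginary parts separately, gives for $|u|\le\delta$
\[
{}^{(\varphi)}K(u)=i\kappa_1u-\tfrac{\kappa_2}{2}u^2+r(u),\qquad |r(u)|\le \tfrac{M_3}{6}|u|^3 .
\]
This is the quantitative replacement for $r(u)=o(u^2)$ in the proof of Theorem~\ref{thm:weak-clt}.

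Fix $T>0$ and take $n\ge n_0:=\lceil T^2/(\delta^2\kappa_2)\rceil$. Then $u=t/(\sigma\sqrt n)$ satisfies $|u|\le\delta$ for all $|t|\le T$. By the factorisation ${}^{(\varphi)}\phi_{S_n}=({}^{(\varphi)}\phi)^n$ (Proposition~\ref{prop:cf-factorise} iterated) and the affine rules (Propositions~\ref{prop:translation} and~\ref{prop:scaling}), the identity of Section~\ref{subsec:clt_convergence} holds:
\[
\log{}^{(\varphi)}\phi_{Z_n}(t)=-\tfrac{t^2}{2}+n\,r\!\bigl(t/(\sigma\sqrt n)\bigr).
\]
Hence
\[
\sup_{|t|\le T}\Bigl|\log{}^{(\varphi)}\phi_{Z_n}(t)+\tfrac{t^2}{2}\Bigr|\le \frac{M_3T^3}{6\sigma^3\sqrt n},
\]
which is the first claim. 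The finitely many $n<n_0$ do not affect an $O(\cdot)$ statement.

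For the second claim, put $w_n(t):=\log{}^{(\varphi)}\phi_{Z_n}(t)+t^2/2$, so that ${}^{(\varphi)}\phi_{Z_n}(t)-e^{-t^2/2}=e^{-t^2/2}(e^{w_n(t)}-1)$. The elementary bound $|e^w-1|\le|w|e^{|w|}$ gives, for $n$ large enough that $\sup_{|t|\le T}|w_n|\le1$, the estimate $\sup_{|t|\le T}|{}^{(\varphi)}\phi_{Z_n}(t)-e^{-t^2/2}|\le e\,M_3T^3/(6\sigma^3\sqrt n)$.

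The step needing most care is the branch of the logarithm. We must check that $\log{}^{(\varphi)}\phi_{Z_n}$ in the statement is the continuous branch equal to $-it\sqrt n\,\kappa_1/\sigma+n\,{}^{(\varphi)}K(t/(\sigma\sqrt n))$, so that the identity above is an equality of functions and not merely modulo $2\pi i$. I would handle this by taking that expression as the definition of the branch on $|t|\le T$. It is continuous, vanishes at $t=0$, and exponentiates to ${}^{(\varphi)}\phi_{Z_n}$, so it is the unique continuous logarithm with value $0$ at the origin. The second display needs no branch choice at all, since it only uses $e^{w_n}$. I would also state explicitly that the constant depends on $T$, on $\kappa_2$, and on $M_3$ (hence on the kernel $\varphi$ through ${}^{(\varphi)}K$), and make no uniformity claim here.
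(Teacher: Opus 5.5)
Your proposal is correct and follows essentially the paper's argument: a third-order Taylor expansion ${}^{(\varphi)}K(u)=i\,{}^{(\varphi)}\kappa_1u-\tfrac{{}^{(\varphi)}\kappa_2}{2}u^2+O(|u|^3)$, substituted at $u=t/\sqrt{n\,{}^{(\varphi)}\kappa_2}$ into the identity of Section~\ref{subsec:clt_convergence}, with the bound then transferred to the transforms through the local Lipschitz property of the exponential. Your version is slightly more careful than the paper's in three places: you use the integral rather than the Lagrange remainder, which is the correct form for a complex-valued ${}^{(\varphi)}K$; you restrict to $n\ge n_0$ so that $u$ stays in the neighbourhood where ${}^{(\varphi)}K$ is $C^3$; and you fix the branch of the logarithm explicitly.
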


\begin{remark}
Theorem~\ref{thm:weak-BE} provides a quantitative refinement of the weak
central limit theorem in the topology of weak characteristic functions.
Unlike the classical Berry--Esseen theorem, the bound is expressed in
transform space rather than in terms of distribution functions. Extending
such bounds to metrics on (weighted or reconstructed) distribution
functions remains an open problem.
\end{remark}

\begin{proof}
By Taylor's theorem with remainder,
\[
{}^{(\varphi)}K(u)
=
i\,{}^{(\varphi)}\kappa_1 u
-
\frac{{}^{(\varphi)}\kappa_2}{2}u^2
+
R(u),
\]
where
\[
R(u)
=
\frac{{}^{(\varphi)}K^{(3)}(\xi_u)}{6}\,u^3
\]
for some \(\xi_u\) between \(0\) and \(u\). Hence, for \(|u|\) small,
\[
|R(u)| \le C |u|^3
\]
for some constant \(C>0\).

Setting
\[
u = \frac{t}{\sqrt{n\,{}^{(\varphi)}\kappa_2}},
\]
we obtain
\[
\log {}^{(\varphi)}\phi_{Z_n}(t)
=
-\frac{t^2}{2}
+
n R\!\left(\frac{t}{\sqrt{n\,{}^{(\varphi)}\kappa_2}}\right).
\]
Therefore, for \(|t|\le T\),
\[
\left|
\log {}^{(\varphi)}\phi_{Z_n}(t)
+
\frac{t^2}{2}
\right|
\le
\frac{C_T}{\sqrt{n}},
\]
for some constant \(C_T>0\).

The bound on the characteristic functions follows from the local Lipschitz
continuity of the exponential map on compact sets.
\end{proof}

\begin{remark}
An explicit bound can be obtained as follows:
\[
\sup_{|t|\le T}
\left|
\log {}^{(\varphi)}\phi_{Z_n}(t)
+
\frac{t^2}{2}
\right|
\le
\frac{M_T\,T^3}{6\,({}^{(\varphi)}\kappa_2)^{3/2}\sqrt{n}},
\]
where
\[
M_T
=
\sup_{|u|\le T/\sqrt{{}^{(\varphi)}\kappa_2}}
\left|
{}^{(\varphi)}K^{(3)}(u)
\right|.
\]
\end{remark}

\subsection{A kernel-uniform refinement and instrument-invariance}
\label{subsec:clt_uniform}

Theorem~\ref{thm:weak-BE} bounds the transform error for a \emph{fixed}
kernel.  Since the kernel is an instrument through which the law is observed
rather than part of the law itself (Section~\ref{sec:introduction}), it is
natural---and, as we now show, provable---to ask for a bound that is
\emph{uniform over a family of kernels}.  The resulting statement has no
classical counterpart: the Gaussian limit is the same for every admissible
probe, attained at a common rate, even for laws (such as the Cauchy) for
which the classical central limit theorem provides no Gaussian limit at all.

\begin{theorem}[Kernel-uniform quantitative weak CLT]
\label{thm:weak-BE-uniform}
Fix $T\in\mathcal{S}'(\R)$ and a family of kernels
$\mathcal{K}\subset\mathcal{S}(\R)$ such that, for every
$\varphi\in\mathcal{K}$, the pair $(T,\varphi)$ has
${}^{(\varphi)}\phi(0)\ne0$, ${}^{(\varphi)}\kappa_2>0$, and
${}^{(\varphi)}K$ is $C^3$ in a neighbourhood of the origin.  Suppose that for
some $T_0>0$,
\[
   c_0:=\inf_{\varphi\in\mathcal{K}}{}^{(\varphi)}\kappa_2>0,
   \qquad
   \overline{M}:=\sup_{\varphi\in\mathcal{K}}\;
   \sup_{|u|\le T_0/\sqrt{c_0}}
   \bigl|{}^{(\varphi)}K^{(3)}(u)\bigr|<\infty .
\]
Let $Z_n^{(\varphi)}=\bigl(S_n-n\,{}^{(\varphi)}\kappa_1\bigr)/
\sqrt{n\,{}^{(\varphi)}\kappa_2}$ denote the sum normalised by the
instrument's own first two weak cumulants.  Then
\[
   \sup_{\varphi\in\mathcal{K}}\;\sup_{|t|\le T_0}
   \Bigl|\,{}^{(\varphi)}\phi_{Z_n^{(\varphi)}}(t)-e^{-t^2/2}\,\Bigr|
   \;=\;O\!\left(\frac{1}{\sqrt n}\right),
\]
with the implied constant depending only on $T_0$, $c_0$ and $\overline{M}$.
\end{theorem}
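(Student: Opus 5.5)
The plan is to rerun the proof of Theorem~\ref{thm:weak-BE} for each $\varphi\in\mathcal{K}$ and track every constant, so that each is controlled by $T_0$, $c_0$ and $\overline{M}$ alone. Fix $\varphi\in\mathcal{K}$. Without loss of generality I normalise so that ${}^{(\varphi)}\phi(0)=1$, which gives ${}^{(\varphi)}K(0)=0$. This matches the convention of Section~\ref{subsec:clt_convergence}, and dividing by ${}^{(\varphi)}\phi(0)$ does not change the derivatives of $K$ of order $\ge1$. Independence (Proposition~\ref{prop:cf-factorise}) and the translation and scaling rules then give the identity
\[
\log{}^{(\varphi)}\phi_{Z_n^{(\varphi)}}(t)=-\tfrac{t^2}{2}+n\,R_\varphi(u),\qquad u=\frac{t}{\sqrt{n\,{}^{(\varphi)}\kappa_2}},
\]
where $R_\varphi$ is the third-order Taylor remainder of ${}^{(\varphi)}K$ at $0$.

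The key step is to bound $R_\varphi$ uniformly in $\varphi$. For $|t|\le T_0$ and $n\ge1$ we have $|u|\le T_0/\sqrt{n\,{}^{(\varphi)}\kappa_2}\le T_0/\sqrt{c_0}$, because ${}^{(\varphi)}\kappa_2\ge c_0$. So $u$ always lies in the window where the hypothesis bounds ${}^{(\varphi)}K^{(3)}$ by $\overline{M}$, and the same holds for the intermediate point $\xi_u$.

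This is where I expect the main care to be needed. The hypothesis only says that ${}^{(\varphi)}K$ is $C^3$ near $0$, while Lagrange's remainder formula needs ${}^{(\varphi)}K$ to be $C^3$ on the whole segment $[0,u]$. I will read the definition of $\overline{M}$ as implicitly asserting that ${}^{(\varphi)}K$, meaning a continuous branch of the logarithm, is defined and $C^3$ on $[-T_0/\sqrt{c_0},T_0/\sqrt{c_0}]$. Otherwise the supremum would not make sense. Alternatively one could shrink to the neighbourhood, but then its size would have to be uniform, so the former reading is cleaner.

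With that reading, Taylor's theorem with Lagrange remainder gives $|R_\varphi(u)|\le \overline{M}|u|^3/6$. Hence
\[
\Bigl|\log{}^{(\varphi)}\phi_{Z_n^{(\varphi)}}(t)+\tfrac{t^2}{2}\Bigr|\le \frac{n\,\overline{M}\,|t|^3}{6\,(n\,{}^{(\varphi)}\kappa_2)^{3/2}}\le\frac{\overline{M}\,T_0^3}{6\,c_0^{3/2}\sqrt n}=:\frac{D}{\sqrt n},
\]
uniformly over $\varphi\in\mathcal{K}$ and $|t|\le T_0$.

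The last step passes from logarithms to transforms. Write $\Delta_\varphi(t)$ for the bracketed error above. Then
\[
\bigl|{}^{(\varphi)}\phi_{Z_n^{(\varphi)}}(t)-e^{-t^2/2}\bigr|=e^{-t^2/2}\,|e^{\Delta_\varphi(t)}-1|\le|\Delta_\varphi|\,e^{|\Delta_\varphi|}\le \frac{D e^{D}}{\sqrt n},
\]
using $|e^z-1|\le|z|e^{|z|}$ and $e^{-t^2/2}\le1$. The resulting constant $De^{D}$ depends only on $T_0$, $c_0$ and $\overline{M}$, which proves the statement.
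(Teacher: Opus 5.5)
Your proof is correct and is essentially the paper's argument. You re-derive inline the explicit bound recorded after Theorem~\ref{thm:weak-BE}: using ${}^{(\varphi)}\kappa_2\ge c_0$ keeps $u$ in the window $|u|\le T_0/\sqrt{c_0}$, so that $M_{T_0}(\varphi)\le\overline{M}$. You then pass from logarithms to transforms by an exponential Lipschitz estimate; the paper uses $|e^{a}-e^{b}|\le e^{\max(\operatorname{Re}a,\operatorname{Re}b)}|a-b|$ where you use $|e^{z}-1|\le|z|e^{|z|}$, with the same outcome.

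Two minor refinements. First, setting ${}^{(\varphi)}\phi(0)=1$ is not a ``without loss of generality'' step, because dividing by ${}^{(\varphi)}\phi(0)$ rescales ${}^{(\varphi)}\phi_{Z_n^{(\varphi)}}$ by ${}^{(\varphi)}\phi(0)^{-n}$. Indeed ${}^{(\varphi)}\phi_{Z_n^{(\varphi)}}(0)={}^{(\varphi)}\phi(0)^{n}$, so the statement genuinely requires this normalisation, and the paper also assumes it implicitly. Second, since ${}^{(\varphi)}K$ is complex-valued, the bound $|R_\varphi(u)|\le\overline{M}|u|^3/6$ should come from the integral form of the remainder rather than from a single intermediate point $\xi_u$.
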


\begin{proof}
Applying the explicit bound recorded after Theorem~\ref{thm:weak-BE} with
$T=T_0$, for each $\varphi\in\mathcal{K}$,
\[
   \sup_{|t|\le T_0}\Bigl|\log{}^{(\varphi)}\phi_{Z_n^{(\varphi)}}(t)
   +\tfrac{t^2}{2}\Bigr|
   \;\le\;\frac{M_{T_0}(\varphi)\,T_0^3}
   {6\,({}^{(\varphi)}\kappa_2)^{3/2}\sqrt n}
   \;\le\;\frac{\overline{M}\,T_0^3}{6\,c_0^{3/2}\sqrt n},
\]
where the second inequality uses ${}^{(\varphi)}\kappa_2\ge c_0$ and
$M_{T_0}(\varphi)\le\overline{M}$; the right-hand side is independent
of~$\varphi$.  Writing $a={}\log{}^{(\varphi)}\phi_{Z_n^{(\varphi)}}(t)$ and
$b=-t^2/2$, both have real part bounded above, so
$|e^{a}-e^{b}|\le e^{\max(\operatorname{Re}a,\operatorname{Re}b)}\,|a-b|$
transfers the (uniform) $O(n^{-1/2})$ bound from the logarithms to the
transforms, uniformly in~$\varphi$.
\end{proof}

\begin{corollary}[Instrument-invariance of the Gaussian limit]
\label{cor:instrument-invariance}
Under the hypotheses of Theorem~\ref{thm:weak-BE-uniform}, every kernel in
$\mathcal{K}$ produces the \emph{same} Gaussian transform limit
$e^{-t^2/2}$, attained at the common rate $O(n^{-1/2})$.  The kernel governs
only the normalising constants
${}^{(\varphi)}\kappa_1,{}^{(\varphi)}\kappa_2$ and the finite-$n$
remainder---not the limit.
\end{corollary}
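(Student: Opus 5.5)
This corollary is essentially a rereading of Theorem~\ref{thm:weak-BE-uniform}, so the plan is to extract its three assertions directly from that theorem and its proof. First, I fix an arbitrary $\varphi\in\mathcal{K}$. The hypotheses of Theorem~\ref{thm:weak-BE-uniform} include ${}^{(\varphi)}\kappa_2\ge c_0>0$ and $C^3$ regularity of ${}^{(\varphi)}K$. So the pair $(T,\varphi)$ satisfies the hypotheses of Theorem~\ref{thm:weak-clt}, giving ${}^{(\varphi)}\phi_{Z_n^{(\varphi)}}(t)\to e^{-t^2/2}$ pointwise. It also satisfies those of Theorem~\ref{thm:weak-BE}. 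The limit function $e^{-t^2/2}$ carries no trace of $\varphi$, which is the first assertion (the \emph{same} limit for every probe).

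Second, for the common rate I invoke the conclusion of Theorem~\ref{thm:weak-BE-uniform} itself. The supremum over $\varphi\in\mathcal{K}$ and $|t|\le T_0$ is bounded by $C(T_0,c_0,\overline{M})/\sqrt n$. Concretely, in the logarithmic form the bound is $\overline{M}T_0^3/(6c_0^{3/2}\sqrt n)$, and it transfers to the transforms as in that proof. Since the constant depends only on $T_0$, $c_0$ and $\overline{M}$, and not on the individual kernel, the rate $O(n^{-1/2})$ is common to all of $\mathcal{K}$.

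Third, for the statement about what the kernel governs, I go back to the decomposition from Section~\ref{subsec:clt_convergence}:
\[
\log{}^{(\varphi)}\phi_{Z_n^{(\varphi)}}(t)=-\tfrac{t^2}{2}+nR_\varphi\!\left(\tfrac{t}{\sqrt{n\,{}^{(\varphi)}\kappa_2}}\right).
\]
The quantities depending on $\varphi$ enter in only two places: the centring and scaling constants ${}^{(\varphi)}\kappa_1$ and ${}^{(\varphi)}\kappa_2$ used to define $Z_n^{(\varphi)}$, and the remainder $nR_\varphi(\cdot)$. That remainder is uniformly $O(n^{-1/2})$ and vanishes in the limit. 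The leading term $-t^2/2$ is kernel-free, and this identifies exactly the kernel-dependent pieces.

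I do not expect a genuine obstacle. The only point needing care is the transfer from logarithms to transforms, which requires a kernel-independent exponential factor $e^{\max(\operatorname{Re}a,\operatorname{Re}b)}$. I would note that $\operatorname{Re}a\le -t^2/2+\overline{M}T_0^3/(6c_0^{3/2})\le \overline{M}T_0^3/(6c_0^{3/2})$ for $n\ge1$, so this factor is bounded uniformly in $\varphi$. That bound was already used implicitly in the proof of Theorem~\ref{thm:weak-BE-uniform}.
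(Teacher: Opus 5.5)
Your proposal is correct and follows the paper's route. The corollary is read off from Theorem~\ref{thm:weak-BE-uniform}, whose proof uses exactly your kernel-free log-bound $\overline{M}T_0^3/(6c_0^{3/2}\sqrt n)$, obtained from ${}^{(\varphi)}\kappa_2\ge c_0$ and $M_{T_0}(\varphi)\le\overline{M}$, together with the transfer $|e^a-e^b|\le e^{\max(\operatorname{Re}a,\operatorname{Re}b)}|a-b|$. Your explicit estimate $\operatorname{Re}a\le \overline{M}T_0^3/(6c_0^{3/2})$ just makes precise the paper's remark that both real parts are bounded above, and your appeal to Theorem~\ref{thm:weak-clt} for the pointwise limit is redundant but harmless.
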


\begin{remark}[Verifiable conditions]
\label{rem:uniform-verifiable}
The hypotheses hold on any compact bandwidth range.  For a fixed symmetric
law and the Gaussian family
$\mathcal{K}=\{\varphi_\sigma(x)=e^{-x^2/(2\sigma^2)}:
\sigma\in[\sigma_*,\sigma^*]\}$ with $0<\sigma_*\le\sigma^*<\infty$, the maps
$\sigma\mapsto{}^{(\varphi_\sigma)}\kappa_2$ and
$\sigma\mapsto M_{T_0}(\varphi_\sigma)$ are continuous, and the former is
strictly positive (the kernel-weighted measure is non-degenerate); hence
$c_0$ and $\overline{M}$ exist by compactness.
\end{remark}

\begin{example}[Instrument-invariance for the Cauchy]\label{ex:cauchy-invariance}
Let $T$ be the standard Cauchy law and $\varphi_\sigma(x)=e^{-x^2/(2\sigma^2)}$
the Gaussian-bandwidth family.  Classically there is no central limit theorem
here at all: the normalised mean of i.i.d.\ Cauchy variables is again Cauchy.
In the weak framework ${}^{(\varphi_\sigma)}\kappa_1=0$ by symmetry, while the
second weak cumulant is available in closed form,
\[
{}^{(\varphi_\sigma)}\kappa_2
=\frac{{}^{(\varphi_\sigma)}m_2}{{}^{(\varphi_\sigma)}m_0}
=\frac{\sigma\sqrt{2/\pi}}
       {e^{1/(2\sigma^2)}\,\operatorname{erfc}\!\bigl(1/(\sigma\sqrt2)\bigr)}-1,
\]
from ${}^{(\varphi_\sigma)}m_0=e^{1/(2\sigma^2)}\operatorname{erfc}\bigl(1/(\sigma\sqrt2)\bigr)$
and ${}^{(\varphi_\sigma)}m_2=\sigma\sqrt{2/\pi}-{}^{(\varphi_\sigma)}m_0$.  It
is continuous and strictly positive, with representative values
\[
\begin{array}{c|ccccc}
\sigma & 0.5 & 1 & 2 & 3 & 5\\[2pt]\hline
{}^{(\varphi_\sigma)}\kappa_2 & 0.19 & 0.53 & 1.28 & 2.06 & 3.65
\end{array}
\]
tending to $0$ as $\sigma\downarrow0$ and growing without bound as
$\sigma\to\infty$ (the kernel-weighted law approaching the moment-less
Cauchy).  Hence on any compact range $[\sigma_*,\sigma^*]\subset(0,\infty)$ the
hypotheses of Theorem~\ref{thm:weak-BE-uniform} hold, with
$c_0=\inf_\sigma{}^{(\varphi_\sigma)}\kappa_2>0$ and the third weak cumulant
uniformly bounded, and therefore
\[
\sup_{\sigma\in[\sigma_*,\sigma^*]}\ \sup_{|t|\le T_0}
\Bigl|{}^{(\varphi_\sigma)}\phi_{Z_n^{(\varphi_\sigma)}}(t)-e^{-t^2/2}\Bigr|
=O\!\left(\frac{1}{\sqrt n}\right).
\]
A law with no classical central limit theorem thus exhibits the \emph{same}
Gaussian weak limit through every kernel in the family, at a common rate.  The
compact range is essential: as $\sigma\downarrow0$ the signal
${}^{(\varphi_\sigma)}\kappa_2$ degenerates, while as $\sigma\to\infty$ the
kernel ceases to tame the Cauchy tails.
\end{example}

\begin{remark}
The complementary direction noted above---a quantitative bound in a genuine
metric on the \emph{reconstructed} distribution function---can be approached
by combining Theorem~\ref{thm:weak-BE-uniform} with the Tikhonov stability
estimate of Appendix~\ref{app:reconstruction}: the transform rate is then
inherited by the reconstructed object up to the regularisation bias
$\bigl\|\lambda(\varphi^2+\lambda)^{-1}f\bigr\|_{L^2}$, while a fully
distributional Berry--Esseen statement remains open.
\end{remark}

\subsection{Interpretation}
\label{subsec:clt_discussion}

The limiting transform $e^{-t^2/2}$ is the classical characteristic function of the standard normal.  The convergence takes place in the sense of \emph{weak transforms}, not in the classical sense of convergence in distribution; the two notions do not in general coincide.

\begin{remark}[The Cauchy case]\label{rem:cauchy-paradox}
If $X_1,\ldots,X_n$ are i.i.d.\ standard Cauchy, the arithmetic mean $\bar X$ is again standard Cauchy (the distribution is $1$-stable), and $\sqrt{n}$-normalisation does not produce a Gaussian limit in the classical sense.  Yet the weak CLT asserts ${}^{(\varphi)}\phi_{Z_n}(t) \to e^{-t^2/2}$---quantitatively, and uniformly over the kernel (Example~\ref{ex:cauchy-invariance}).

There is no contradiction.  First, the normalisation depends on the kernel: ${}^{(\varphi)}\kappa_1$ and ${}^{(\varphi)}\kappa_2$ change with~$\varphi$.  Second, ${}^{(\varphi)}\phi_{Z_n}(t)$ is the \emph{regularised} transform (not the classical characteristic function of $Z_n$), which suppresses the heavy tails via the kernel.  Third, ${}^{(\varphi)}\phi_{Z_n}(t)\to e^{-t^2/2}$ does not imply $Z_n$ converges in distribution to a Gaussian; it says that the regularised transform of the normalised sum, viewed through the kernel window, becomes asymptotically Gaussian.  The underlying distribution of $Z_n$ remains heavy-tailed (a rescaled Cauchy), but the kernel reveals a Gaussian core.
\end{remark}

\subsection{Distributional convergence of the kernel-weighted measure}
\label{subsec:clt_distributional}

The weak CLT (Theorem~\ref{thm:weak-clt}) establishes convergence of weak
characteristic functions.  In the classical density case, a stronger
conclusion is available: the convergence of weak transforms implies
classical convergence in distribution of the kernel-weighted measures.
This result does not replace the weak CLT but complements it, providing
a bridge from the weak framework back to the classical one.

\begin{theorem}[Distributional CLT for the kernel-weighted measure]
\label{thm:distributional-clt}
Under the conditions of Theorem~\ref{thm:weak-clt}, suppose additionally
that each~$X_i$ has a nonnegative density $f\in L^1(\R)$ with
$\varphi\ge 0$ and $\int_\R f(x)\varphi(x)\,dx = 1$.  Define the
kernel-weighted probability density $h := \varphi f$.  Then:
\begin{enumerate}[label=(\alph*)]
\item The function $h$ is a probability density with finite moments of all
orders.

\item The mean and variance of~$h$ coincide with the first two weak
cumulants:
$\mu_h = {}^{(\varphi)}\kappa_1$ and $\sigma_h^2 = {}^{(\varphi)}\kappa_2$.

\item The classical characteristic function of~$h$ is the weak
characteristic function: $\widehat{h}(t) = {}^{(\varphi)}\phi(t)$.

\item Let $Y_1,Y_2,\ldots$ be i.i.d.\ with density~$h$ and set
\[
\bar{Z}_n
\;=\;
\frac{Y_1+\cdots+Y_n - n\,{}^{(\varphi)}\kappa_1}
     {\sqrt{n\,{}^{(\varphi)}\kappa_2}}.
\]
Then $\bar{Z}_n$ converges in distribution to $\mathcal{N}(0,1)$:
for every $a\in\R$,
\[
P(\bar{Z}_n \le a)
\;\longrightarrow\;
\Phi(a),
\]
where $\Phi$ is the standard normal CDF.
\end{enumerate}
\end{theorem}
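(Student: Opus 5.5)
The plan is to reduce everything to the classical Lindeberg--L\'evy theorem applied to the genuine probability density $h=\varphi f$, using the paper's identification of weak objects with classical objects of $h$.

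\emph{Part (a).} Since $f\ge0$ and $\varphi\ge0$, we have $h\ge0$. The normalisation $\int f\varphi\,dx=1$ makes $h$ a probability density. For moments, note that $x^n\varphi\in\mathcal{S}(\R)$ is bounded, say $|x^n\varphi(x)|\le C_n$. Hence $\int|x|^n h\,dx\le C_n\|f\|_{L^1}<\infty$ for every $n$.

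\emph{Part (c).} This is immediate from the integral form of the pairing for $T=T_f$:
\[
{}^{(\varphi)}\phi(t)=\langle T_f,e^{itx}\varphi\rangle=\int e^{itx}\varphi(x)f(x)\,dx=\widehat h(t).
\]

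\emph{Part (b).} Since ${}^{(\varphi)}\phi(0)=\int h=1$, the weak cumulant generating function ${}^{(\varphi)}K=\log\widehat h$ near $0$ is exactly the classical cumulant generating function of $h$. Differentiating at $0$, using Corollary~\ref{cor:moments-from-cf}, gives
\[
({}^{(\varphi)}\phi)'(0)=i\,{}^{(\varphi)}m_1,\qquad ({}^{(\varphi)}\phi)''(0)=-{}^{(\varphi)}m_2 .
\]
With the convention $K'(0)=i\kappa_1$ and $K''(0)=-\kappa_2$ used in the expansion of Section~\ref{subsec:cumulants_cgf}, this yields ${}^{(\varphi)}\kappa_1=m_1(h)=\mu_h$ and ${}^{(\varphi)}\kappa_2=m_2(h)-m_1(h)^2=\sigma_h^2$.

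This convention needs care. Definition~\ref{def:cumulants} literally defines $\kappa_n$ as the plain $n$-th derivative $K^{(n)}(0)$, without the $i^{-n}$ factor. I will read it through the expansion ${}^{(\varphi)}K(t)=\sum\kappa_n(it)^n/n!$, as the paper does in Section~\ref{subsec:clt_convergence}, and state this explicitly in the proof.

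\emph{Part (d).} By (a) and (b), the $Y_i$ are i.i.d.\ with finite variance $\sigma_h^2={}^{(\varphi)}\kappa_2>0$, the positivity being the hypothesis of Theorem~\ref{thm:weak-clt}. The classical Lindeberg--L\'evy CLT then applies directly: $\bar Z_n\to\mathcal N(0,1)$ in distribution. Because $\Phi$ is continuous, this gives $P(\bar Z_n\le a)\to\Phi(a)$ for every $a$.

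An alternative route keeps the argument inside the paper's framework. By (c), the characteristic function of $\bar Z_n$ is
\[
e^{-itn\kappa_1/\sqrt{n\kappa_2}}\;\widehat h\bigl(t/\sqrt{n\kappa_2}\bigr)^n={}^{(\varphi)}\phi_{Z_n}(t),
\]
where the equality uses Proposition~\ref{prop:cf-factorise} together with the normalisation computation. Theorem~\ref{thm:weak-clt} shows this tends to $e^{-t^2/2}$ pointwise, and L\'evy's continuity theorem then gives the conclusion. I would present this second route, since it realises the claimed bridge from weak transform convergence to convergence in distribution, and cite Lindeberg--L\'evy as a cross-check.

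The main obstacle is not analytic but bookkeeping. The factorisation in Proposition~\ref{prop:cf-factorise} uses the tensor kernel $\varphi^{\otimes n}$, so ${}^{(\varphi)}\phi_{S_n}={}^{(\varphi)}\phi^n$. I must check that this equals the characteristic function of the sum of independent $Y_i$ with density $h$. It does, because the joint density $\prod h(y_i)=\prod\varphi(y_i)f(y_i)$ is exactly the tensor-product pairing (Remark~\ref{rem:independence-classical}).
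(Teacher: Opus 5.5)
Your proposal is correct and, in the form you say you would present, it follows the paper's proof: (a)--(c) come from direct computation with the integral form of the pairing. For (d), you identify the characteristic function of $\bar Z_n$ with ${}^{(\varphi)}\phi_{Z_n}$ via (c), then apply Theorem~\ref{thm:weak-clt} and L\'evy's continuity theorem. You are right that (b) needs the cumulants read through the expansion ${}^{(\varphi)}K(t)=\sum_n {}^{(\varphi)}\kappa_n (it)^n/n!$ rather than the literal Definition~\ref{def:cumulants}; the paper's proof of (b) uses that convention tacitly, and your direct Lindeberg--L\'evy cross-check is equally valid.
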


\begin{proof}
(a) Since $f\ge 0$, $\varphi\ge 0$, and $\int h = 1$, the function $h$ is
a probability density.  For every $k\ge 0$,
\[
\int_\R |x|^k h(x)\,dx
=
\int_\R |x|^k \varphi(x) f(x)\,dx
< \infty,
\]
because $|x|^k\varphi(x)$ is bounded ($\varphi\in\mathcal{S}(\R)$ implies
$x^k\varphi$ is bounded for every~$k$) and $f\in L^1(\R)$.

(b) The mean of~$h$ is
$\mu_h = \int x\,h(x)\,dx = \int x\,\varphi(x)f(x)\,dx
= {}^{(\varphi)}m_1 = {}^{(\varphi)}\kappa_1$
(since ${}^{(\varphi)}\phi(0)=1$).  The variance
is $\sigma_h^2 = \int(x-\mu_h)^2 h(x)\,dx
= {}^{(\varphi)}m_2 - ({}^{(\varphi)}m_1)^2 = {}^{(\varphi)}\kappa_2$.

(c) $\widehat{h}(t) = \int e^{itx}h(x)\,dx
= \int e^{itx}\varphi(x)f(x)\,dx = {}^{(\varphi)}\phi(t)$.

(d) By~(c), the classical characteristic function of the distribution~$h$
coincides with the weak characteristic function of the pair~$(T_f,\varphi)$.
Therefore the weak CLT (Theorem~\ref{thm:weak-clt}) can be reinterpreted as
the convergence of classical characteristic functions of the standardised
sums of $h$-distributed random variables to $e^{-t^2/2}$.  Since $h$ has
all moments, L\'evy's continuity theorem~\cite{Billingsley1995} gives
$\bar{Z}_n\xrightarrow{d}\mathcal{N}(0,1)$.
\end{proof}

\begin{remark}[Interpretation and logical structure]
\label{rem:distributional-clt-interp}
The relationship between Theorems~\ref{thm:weak-clt}
and~\ref{thm:distributional-clt} merits careful unpacking, as the two
results operate at different levels.

The \emph{weak CLT} (Theorem~\ref{thm:weak-clt}) is a statement about
the transform ${}^{(\varphi)}\phi_{Z_n}(t)$, where $Z_n$ is defined
formally via the translation and scaling rules
(Propositions~\ref{prop:translation} and~\ref{prop:scaling}).  No
classical random variables $X_i$ with density $f$ need be sampled;
the entire argument takes place at the level of weak characteristic
functions.  In particular, the conclusion
${}^{(\varphi)}\phi_{Z_n}(t)\to e^{-t^2/2}$ is \emph{not} a
convergence-in-distribution statement for any sequence of classical
random variables.

The \emph{distributional CLT} (Theorem~\ref{thm:distributional-clt})
constructs a \emph{new} probability model: i.i.d.\ random variables
$Y_1,Y_2,\ldots$ sampled from the density $h=\varphi f$.  The bridge
is part~(c): the classical characteristic function of $h$ equals
the weak characteristic function ${}^{(\varphi)}\phi(t)$.
Consequently, the standardised sums
$\bar{Z}_n=(Y_1+\cdots+Y_n-n\mu_h)/\sqrt{n\sigma_h^2}$ satisfy
$\bar{Z}_n\xrightarrow{d}\mathcal{N}(0,1)$ by the classical CLT
(via L\'evy's theorem).  The variables $Y_i$ are sampled from $h$,
not from $f$; the distributional CLT therefore does not assert
Gaussian convergence for sums of $f$-distributed observations
(which would be false for, say, Cauchy $f$).

In summary: the weak CLT provides a transform-level Gaussian limit
valid for \emph{any} $(T,\varphi)$ with ${}^{(\varphi)}\kappa_2>0$.
The distributional CLT shows that, in the density case, this
transform convergence is \emph{realised} as genuine
convergence-in-distribution for the kernel-weighted model $h=\varphi
f$.  The kernel $\varphi$ tames the tails of~$f$: even when $f$
itself violates the conditions of the classical CLT (as for Cauchy
or stable densities), the product $h=\varphi f$ has all finite
moments and therefore admits a classical Gaussian limit theorem.

In particular, the kernel-weighted CDF converges to the standard normal
CDF:
\[
F_\varphi^{(n)}(a)
:=
\int_{-\infty}^{a}\varphi(x) f_n(x)\,dx
\;\longrightarrow\;
\Phi(a),
\]
where $f_n$ is the density of $\bar{Z}_n$.  Comparing with
Proposition~\ref{prop:weighted-cdf-recovery}, which shows that the
kernel-weighted CDF is directly accessible from the weak framework, we
see that the object the weak framework computes---the kernel-weighted
distribution function---converges in the classical sense.
\end{remark}

\begin{remark}[Connection to reconstruction]
\label{rem:clt-reconstruction}
Combined with the Tikhonov reconstruction of
Appendix~\ref{app:reconstruction}, the distributional convergence provides
a pathway from weak transform convergence to density-level statements.
Since $h_n := \varphi f_n \to \phi$ in distribution (where
$\phi(x) = (2\pi)^{-1/2}e^{-x^2/2}$ is the standard normal density),
one can apply the regularised inverse to the limiting density:
$R_\lambda\phi = \frac{\varphi\phi}{\varphi^2 + \lambda}$,
which as $\lambda\downarrow 0$ approximates $\phi/\varphi$---the density
whose kernel-weighted version is Gaussian.  This establishes a concrete
bridge: the weak CLT provides convergence of kernel-weighted measures;
the reconstruction machinery then translates this into density-level
approximations.
\end{remark}

\section{A motivating statistical consequence}
\label{sec:motivating-consequence}

The preceding sections develop the weak framework as a probabilistic-analytic object.  We close with a brief illustration showing that it already has a tangible statistical consequence, even at the level of a single weak moment.  A full statistical theory---including GMM estimation, transform-based estimators, regularised reconstruction, and robust estimation---is the subject of the companion paper Labouriau~\cite{LabouriauA2}.

\subsection{Cauchy location}

Let $X \sim \mathrm{Cauchy}(\mu,1)$ with density $f(x;\mu) = \pi^{-1}(1+(x-\mu)^2)^{-1}$.  No classical moment exists, so the ordinary method of moments cannot estimate~$\mu$.  Fix a Gaussian kernel $\varphi_\sigma(x) = \exp(-x^2/(2\sigma^2))$ with $\sigma>0$.  The weak first moment
\[
{}^{(\varphi_\sigma)}m_1(\mu)
=
\int_{\R} x\,\varphi_\sigma(x)\,f(x;\mu)\,dx
\]
is finite for every $\mu$ and every $\sigma>0$, because $|x|\varphi_\sigma(x)$ is bounded and $f(\cdot;\mu)$ is integrable.  By dominated convergence, $\mu\mapsto {}^{(\varphi_\sigma)}m_1(\mu)$ is smooth, and for $\sigma$ of moderate size it is strictly monotone in a neighbourhood of any $\mu_0$, admitting local inversion.

\subsection{A weak-moment estimator}

Given i.i.d.\ $X_1,\dots,X_n\sim\mathrm{Cauchy}(\mu,1)$, define the empirical weak first moment $\widehat m_1 = n^{-1}\sum_{i=1}^n X_i\varphi_\sigma(X_i)$.  Since $|x|\varphi_\sigma(x)$ is bounded, $\widehat m_1$ obeys the strong law of large numbers: $\widehat m_1 \xrightarrow{\text{a.s.}} {}^{(\varphi_\sigma)}m_1(\mu)$.  The estimator $\widehat\mu$ is the solution of
\[
\widehat m_1 = {}^{(\varphi_\sigma)}m_1(\widehat\mu),
\]
which exists and is unique on the monotonicity interval of ${}^{(\varphi_\sigma)}m_1$.  An implicit-function argument gives $\sqrt{n}$-consistency and asymptotic normality.

\begin{remark}[What the illustration shows]
The point is not that~$\widehat\mu$ is the best estimator for Cauchy location---the sample median and the MLE are available and well understood---but that it is constructed exclusively from a first-moment condition in a model where the classical first moment does not exist.  This is impossible within the classical method-of-moments framework, and it illustrates, in the simplest setting, the statistical payoff of the weak moment theory.
\end{remark}

\begin{remark}[Kernel as design parameter]
The estimator depends on $\sigma$, which trades off robustness to outliers against asymptotic variance.  This trade-off, including its connection to classical robust statistics and a detailed comparison with median, MLE, Huber, and Tukey estimators, is developed in the companion paper.
\end{remark}

\subsection{Location of a moving atom: estimation without a likelihood}
\label{subsec:moving-atom-estimator}

The Cauchy illustration concerns the failure of \emph{moments}.  A second
illustration, equally elementary, concerns the failure of the
\emph{likelihood}.  Recall the non-dominated family of
Remark~\ref{rem:moving-atom},
\[
   P_\theta=\tfrac12\,\delta_\theta+\tfrac12\,N(0,1),\qquad\theta\in\R,
\]
for which no common dominating measure---and hence no likelihood---exists, so
that no likelihood-based estimator of the atom location $\theta$ can even be
written down.  The weak first moment is nevertheless available in closed form:
with the Gaussian kernel $\varphi_\sigma(x)=e^{-x^2/(2\sigma^2)}$ of bandwidth $\sigma$, the $N(0,1)$ part contributes nothing
by symmetry and
\[
   {}^{(\varphi_\sigma)}m_1(\theta)
   =\langle T_\theta,x\,\varphi_\sigma\rangle
   =\tfrac12\,\theta\,e^{-\theta^2/2\sigma^2},
\]
which is strictly increasing on $(-\sigma,\sigma)$.  Given i.i.d.\
$X_1,\dots,X_n\sim P_\theta$, the empirical weak first moment
$\widehat m_1=n^{-1}\sum_{i=1}^n X_i\varphi_\sigma(X_i)$ obeys the strong law
of large numbers (the summand $x\varphi_\sigma(x)$ is bounded), and the
estimator $\widehat\theta$ solving
$\widehat m_1={}^{(\varphi_\sigma)}m_1(\widehat\theta)$ exists, is unique on
$(-\sigma,\sigma)$, and---by the same implicit-function argument---is
$\sqrt n$-consistent and asymptotically normal there.  Thus the atom location
is consistently estimable from a single weak moment in a model that admits
\emph{no likelihood at all}: the exact counterpart, on the
likelihood-failure side, of the Cauchy estimator on the moment-failure side.
A fuller treatment---the asymptotic variance, the bandwidth/identifiability
trade-off, and a GMM with several weak moments to widen the range
$|\theta|<\sigma$---is given in the companion paper~\cite{LabouriauA2}.

\section{Discussion and open problems}
\label{sec:discussion}

The framework developed in this paper represents a probability law by a
tempered distribution $T$ rather than by a density, and extracts information
from it through the kernel-regularised pairing $\langle T,\psi\varphi\rangle$. From this single
device we obtain a coherent extension of moment-based probability:
weak moments of all orders exist unconditionally; the weak characteristic
function is smooth and encodes these moments; weak cumulants retain their
algebraic structure under independence and affine transformations; a weak
moment problem admits a hierarchy of uniqueness results; and a weak central
limit theorem holds at the level of regularised transforms.

These results show that the absence of classical moments does not prevent a
meaningful probabilistic calculus. Instead, the kernel $\varphi$ provides a
controlled regularisation that restores finiteness and smoothness while
preserving the essential algebraic structure of moments and cumulants.

\medskip

Several questions remain open and define a natural research programme.

\emph{Uniqueness beyond the hierarchy and classical M-determinacy:}
For which positive Schwartz kernels is the family
$\{x^n\varphi:n\ge 0\}$ total in $\mathcal{S}(\R)$?
The Gaussian kernel provides a canonical case, via the completeness of the
Hermite functions. More generally, this problem is closely related to
weighted polynomial approximation on the real line and to the classical
theory of M-determinacy.  As discussed in
Section~\ref{subsec:classical-moment-problem}, the weak moment problem
transforms the classical Hamburger moment problem into a weighted one:
Carleman's condition and Krein's criterion~\cite{Stoyanov2000Krein} are applied to
the weighted measure $\varphi(x)\,dx$ rather than to the original
distribution.  The boundary between M-determinate and M-indeterminate
weighted measures---explored in depth by
Stoyanov~\cite{Stoyanov2013,StoyanovInverardiTagliani2023} and
collaborators---therefore governs the reach of the weak uniqueness
theorems.  For kernels of the form $\varphi(x)=e^{-Q(x)}$ with $Q$ even
and sufficiently convex, one enters the classical theory of Freud weights,
where density and approximation properties of polynomials in weighted
function spaces have been extensively
studied~\cite{Lubinsky2007}. In particular, for
$\varphi(x)=e^{-|x|^\alpha}$ with $\alpha>1$, results on rapid polynomial
approximation suggest that the Gaussian case $\alpha=2$ is part of a broader
phenomenon, although a full characterisation of totality in
$\mathcal{S}(\R)$ remains open. Complementary insight is provided by the
theory of de~Branges, which characterises non-density of polynomials in
weighted spaces in terms of associated entire functions~\cite{deBranges1968}.
Extending such criteria to the Schwartz topology
appears to be the natural route to a complete solution.

A deeper geometric explanation for the resolution of
M-indeterminacy is developed in a companion
paper~\cite{LabouriauTransversality}.  In that work, classical
statistical degeneracies---including moment indeterminacy, failure of
identifiability, and non-existence of moments---are interpreted as
failures of \emph{transversality} between the statistical model and
a probing family (here, $\{x^n\varphi\}$) in an appropriate function
space.  Multiplication by a suitable kernel~$\varphi$ acts as a
geometric perturbation that restores the transversality lost in the
classical (unweighted) setting.  Concretely, the passage from the
bare monomial family $\{x^n\}$ to the weighted family
$\{x^n\varphi\}$ moves the probing subspace into general position
relative to the model, so that the intersection---the set of
distributions sharing all weak moments---shrinks to a single point.
From this viewpoint, the uniqueness hierarchy of
Section~\ref{sec:moment_problem} quantifies how much decay the
kernel must have (exponential tails, Gevrey-type decay, Gaussian
decay) to guarantee that transversality is recovered.  This
geometric perspective also explains why some kernels fail: when
$\varphi$ decays too slowly, the perturbation is insufficient to
break the degeneracy, and M-indeterminacy persists (as in the
sub-exponential counterexample of
Remark~\ref{rem:carleman-automatic}).

\medskip

\emph{Identifiability in parametric families:}
While the weak moment sequence determines the underlying distribution under
conditions on the kernel, parametric identifiability requires that the map
$\theta\mapsto\{{}^{(\varphi)}m_n(\theta)\}$ be injective. This is a
model-dependent question that must be analysed case by case. In the context
of statistical inference, related local identifiability conditions arise
through rank assumptions on moment mappings, and are developed further in
the companion paper.

\medskip

\emph{Rates in the weak CLT:}
The weak central limit theorem of Section~\ref{sec:clt} is already
quantitative---a Berry--Esseen-type bound in transform space
(Theorem~\ref{thm:weak-BE}), refined to a rate that is \emph{uniform over the
kernel} (Theorem~\ref{thm:weak-BE-uniform}, instrument-invariance).  What
remains open is the transfer of such rates to genuine distributional
metrics---on the kernel-weighted or reconstructed distribution function,
rather than on weak characteristic functions---which would combine the
transform bounds with the reconstruction analysis of
Appendix~\ref{app:reconstruction}.

\medskip

\emph{Singularity as differentiated regularity.}
The structure-theorem reading of Remark~\ref{rem:structure}---singular
statistical objects as differentiated regularity, stabilised by the
kernel---is more than interpretation.  It situates the present framework within
a broader programme in which classical statistical degeneracies
(non-identifiability, singular information, moment indeterminacy) are read as
unstable configurations of differentiated regularity and resolved by kernel and
geometric regularisation~\cite{LabouriauTransversality}.  It also indicates the
reach of the construction to genuinely singular models---those with no density
and no classical inferential footing---where the kernel supplies the only
stable scalar quantities available.

\medskip

\emph{Finite-resolution observation.}
The framework also sits within the approximation principles of distribution
theory.  A Schwartz mollifier
$\rho_\varepsilon(x)=\varepsilon^{-d}\rho(x/\varepsilon)$ with $\int\rho=1$
satisfies $\rho_\varepsilon\to\delta_0$ in $\mathcal{S}'(\R^d)$ as
$\varepsilon\downarrow0$, so classical pointwise observation is the
infinite-resolution limit of smooth observational schemes.  The weak framework
reverses the usual idealisation: it takes finite-resolution observation through
the kernel as primitive, and recovers the classical, kernel-free quantities in
the complementary limit $\varphi\to1$ (Lemma~\ref{lem:phi-to-1}).  This reading
is especially apt in the heavy-tailed, singular, and non-likelihood settings of
this paper, where the infinite-resolution object may itself be unstable or
unavailable.

\medskip

\emph{Connections with other frameworks.}
The weak-expectation framework is related in spirit to several extensions of
classical probability, but is distinct from each of them. In sublinear
expectation theory \cite{Peng2008}, the expectation operator is nonlinear
and encodes model uncertainty, leading to objects such as the $G$-normal
distribution and nonlinear central limit theorems. In contrast, the present
framework retains linearity once the pair $(T,\varphi)$ is fixed, and the
extension arises from regularisation rather than nonlinearity. It is closer
in spirit to the theory of generalised random variables and processes in the
sense of Gel'fand and Vilenkin \cite{Gelfand1964}, where random objects are
defined as continuous linear functionals on spaces of test functions and
characterised by their characteristic functionals. The present approach may
be viewed as a scalar, regularised counterpart of this theory, focused on
moment, cumulant, and transform structures. Finally, there is a connection
to functional limit theorems for heavy-tailed processes \cite{Resnick2007},
where partial sums converge to stable or self-similar limits. The weak CLT
developed here operates at the level of regularised transforms for scalar
sums; extending it to process-level limits in distribution or Skorokhod
spaces is a natural direction for future work.

\medskip

\emph{Statistical inference.}
The motivating consequence presented in
Section~\ref{sec:motivating-consequence} is deliberately minimal. A full
development---including weak moment estimation, transform-based methods,
regularised reconstruction, optimal kernel selection, and robustness
analysis---is the subject of a companion paper (see \cite{LabouriauA2}). In particular, the weak
framework naturally leads to redescending $M$-estimators (or inference functions) with bounded
influence functions, without the need for ad hoc truncation (see  \cite{LabouriauPaperD}).

\medskip

These questions define a coherent programme growing out of the present
framework, linking distributional probability, weighted approximation
theory, and robust statistical inference.


\newpage

\appendix

\section{Leibniz estimate for the proof of Theorem~\ref{thm:dc-unique}}
\label{app:leibniz}

We verify in detail the bound~\eqref{eq:psi-gevrey} claimed in Part~1 of
the proof of Theorem~\ref{thm:dc-unique}.

\medskip
\noindent\textbf{Setup.}
Recall that $S = T_1 - T_2\in\mathcal{S}'(\R)$ and
$\psi(t) = \langle S, e^{itx}\varphi(x)\rangle$.  Writing
$f_n(x) = (ix)^n e^{itx}\varphi(x)$, we have
$\psi^{(n)}(t) = \langle S, f_n\rangle$.  Since
$S\in\mathcal{S}'(\R)$, there exist $N\in\N$ and $C'>0$ such that
\begin{equation}\label{eq:S-bound}
  |\langle S, f\rangle|
  \;\le\;
  C'\!\sum_{\substack{\alpha,\gamma\ge 0 \\ \alpha+\gamma\le N}}
  \sup_{x\in\R}\,|x^\alpha f^{(\gamma)}(x)|
  \qquad\text{for all }f\in\mathcal{S}(\R).
\end{equation}
It therefore suffices to bound each seminorm
$p_{\alpha,\gamma}(f_n) = \sup_x |x^\alpha f_n^{(\gamma)}(x)|$
with $\alpha+\gamma\le N$.

\medskip
\noindent\emph{Leibniz expansion:}
Fix a pair $(\alpha,\gamma)$ with $\alpha+\gamma\le N$.  The
$\gamma$-th $x$-derivative of $f_n(x)=(ix)^n e^{itx}\varphi(x)$ is
computed by the Leibniz rule applied to the three factors
$u(x)=(ix)^n$, $v(x)=e^{itx}$, $w(x)=\varphi(x)$:
\begin{equation}\label{eq:leibniz-triple}
  f_n^{(\gamma)}(x)
  \;=\;
  \sum_{j+k+\ell=\gamma}
  \frac{\gamma!}{j!\,k!\,\ell!}\;
  [(ix)^n]^{(j)}\;\cdot\;
  [e^{itx}]^{(k)}\;\cdot\;
  \varphi^{(\ell)}(x).
\end{equation}
The individual factors are:
\begin{itemize}
\item $[(ix)^n]^{(j)} = i^n \cdot \frac{n!}{(n-j)!}\,x^{n-j}$ for
$j\le n$ (and zero for $j>n$), so
$|[(ix)^n]^{(j)}| = \frac{n!}{(n-j)!}\,|x|^{n-j}$.
\item $[e^{itx}]^{(k)} = (it)^k e^{itx}$, so
$|[e^{itx}]^{(k)}| = |t|^k$.
\item $\varphi^{(\ell)}(x)$ is controlled by the decay
condition~\eqref{eq:gevrey-decay}.
\end{itemize}

\medskip
\noindent\emph{Applying the decay condition:}
For each term in~\eqref{eq:leibniz-triple}, the contribution to the
seminorm $p_{\alpha,\gamma}$ requires bounding
\[
  |x|^\alpha \cdot \frac{n!}{(n-j)!}\,|x|^{n-j}\cdot
  |t|^k\cdot|\varphi^{(\ell)}(x)|
  \;=\;
  \frac{n!}{(n-j)!}\,|t|^k\;
  |x|^{n-j+\alpha}\,|\varphi^{(\ell)}(x)|.
\]
Since $|x|^{n-j+\alpha}\le (1+|x|)^{n-j+\alpha}$, the decay
condition~\eqref{eq:gevrey-decay} (with $m=\ell$ and
$k$ replaced by $n-j+\alpha$) gives
\begin{equation}\label{eq:decay-applied}
  (1+|x|)^{n-j+\alpha}\,|\varphi^{(\ell)}(x)|
  \;\le\;
  C_\ell\,A^{n-j+\alpha}\,\bigl((n-j+\alpha)!\bigr)^{1/\beta}.
\end{equation}

\medskip
\noindent\emph{Factorial estimate:}
Since $j\le\gamma\le N$ and $\alpha\le N$, we have
$n-j+\alpha \le n+N$.  We need to control
$((n-j+\alpha)!)^{1/\beta}$ in terms of $(n!)^{1/\beta}$.

For $n\ge N$, the identity $(n+N)! = (n+N)(n+N-1)\cdots(n+1)\cdot n!$
gives
\begin{equation}\label{eq:factorial-ineq}
  (n+N)!
  \;\le\; (n+N)^N\cdot n!
  \;\le\; (2n)^N\cdot n!.
\end{equation}
Since $n-j+\alpha\le n+N$, monotonicity of the factorial yields
$(n-j+\alpha)!\le (n+N)!\le (2n)^N\cdot n!$.  Raising to the power
$1/\beta$:
\begin{equation}\label{eq:factorial-beta}
  \bigl((n-j+\alpha)!\bigr)^{1/\beta}
  \;\le\; (2n)^{N/\beta}\,(n!)^{1/\beta}
  \;\le\; 2^{N/\beta}(n+1)^{N/\beta}\,(n!)^{1/\beta},
\end{equation}
where the last inequality uses $n\le n+1$ for tidiness.

\medskip
\noindent\emph{The binomial prefactor:}
The falling factorial $n!/(n-j)!$ satisfies, for $0\le j\le N$,
\[
  \frac{n!}{(n-j)!}
  \;=\; n(n-1)\cdots(n-j+1)
  \;\le\; n^j
  \;\le\; (n+1)^N.
\]

\medskip
\noindent\emph{Assembling the bound:}
Combining Steps~2--4, each term in the Leibniz
expansion~\eqref{eq:leibniz-triple} contributes at most
\[
  \frac{\gamma!}{j!\,k!\,\ell!}\;
  (n+1)^N\;\cdot\;|t|^k\;\cdot\;
  C_\ell\,A^{n-j+\alpha}\;\cdot\;
  2^{N/\beta}(n+1)^{N/\beta}\,(n!)^{1/\beta}
\]
to $\sup_x|x^\alpha f_n^{(\gamma)}(x)|$.  For $|t|\le R$, we bound
$|t|^k\le R^k\le R^N$.  Since $A^{n-j+\alpha}\le A^{n+N}
= A^N\cdot A^n$ and the number of terms in the
sum~\eqref{eq:leibniz-triple} is $\binom{\gamma+2}{2}\le
\binom{N+2}{2}$, there exists a constant $E = E(N,\beta,R,A,\{C_\ell\})>0$
such that
\[
  p_{\alpha,\gamma}(f_n)
  \;\le\;
  E\,(n+1)^{N+N/\beta}\,A^n\,(n!)^{1/\beta}.
\]

\medskip
\noindent\emph{Concluding:}
Summing over the finitely many pairs $(\alpha,\gamma)$ with
$\alpha+\gamma\le N$ and applying~\eqref{eq:S-bound}:
\[
  |\psi^{(n)}(t)|
  \;\le\;
  C'\!\sum_{\alpha+\gamma\le N} p_{\alpha,\gamma}(f_n)
  \;\le\;
  \underbrace{C'\cdot\binom{N+2}{2}\cdot E}_{=:\,C''}\;
  (n+1)^{N+N/\beta}\,A^n\,(n!)^{1/\beta}.
\]
Since $N+N/\beta\le 2N$ (as $\beta\ge 1$), we may absorb the
exponent into $(n+1)^{2N}$ and set $B=A$ to obtain the
bound~\eqref{eq:psi-gevrey}:
\[
  |\psi^{(n)}(t)|
  \;\le\;
  C''\,(n+1)^{N'}\,B^n\,(n!)^{1/\beta}
  \qquad\text{for all }|t|\le R,\; n\ge 0,
\]
with $N' = 2N$ and constants $B,C''>0$ depending on $R$, $N$,
$\beta$, $A$, and the seminorm constants
$\{C_\ell\}_{\ell=0}^N$.  This completes the verification
of~\eqref{eq:psi-gevrey}.

\section{Supplementary proofs}
\label{app:proofs}

This appendix collects the proofs of several results stated in the main text.  These proofs may be omitted from the journal version without affecting the logical structure of the paper.

\medskip
\noindent\textbf{Proof of Corollary~\ref{cor:cumulant-additivity}}
(Additivity of weak cumulants).

\begin{proof}
By Theorem~\ref{thm:cumulant-additivity}, the weak cumulant generating
functions satisfy
${}^{(\varphi)}K_{X+Y}(t) = {}^{(\varphi_1)}K_X(t) + {}^{(\varphi_2)}K_Y(t)$
in a neighbourhood of $t=0$.  Since ${}^{(\varphi_1)}\phi_X(0)\neq 0$ and
${}^{(\varphi_2)}\phi_Y(0)\neq 0$, all three CGFs are smooth near the
origin.  Differentiating $n$ times with respect to $t$ and evaluating
at $t=0$ gives
\[
\frac{d^n}{dt^n}{}^{(\varphi)}K_{X+Y}(t)\Big|_{t=0}
=
\frac{d^n}{dt^n}{}^{(\varphi_1)}K_X(t)\Big|_{t=0}
+
\frac{d^n}{dt^n}{}^{(\varphi_2)}K_Y(t)\Big|_{t=0},
\]
which is ${}^{(\varphi)}\kappa_n(X+Y) = {}^{(\varphi_1)}\kappa_n(X)
+ {}^{(\varphi_2)}\kappa_n(Y)$.
\end{proof}

\medskip
\noindent\textbf{Proof of Proposition~\ref{prop:translation}}
(Translation).

\begin{proof}
Recall that the translated pair $(T_a,\varphi_a)$ is defined by
$\langle T_a,\psi\rangle = \langle T,\psi(\cdot+a)\rangle$ and
$\varphi_a(x) = \varphi(x-a)$.  Hence the weak characteristic function
of the translated pair is
\begin{align*}
{}^{(\varphi_a)}\phi_a(t)
&= \langle T_a,\,e^{itx}\varphi_a(x)\rangle
= \langle T,\,e^{it(x+a)}\varphi_a(x+a)\rangle \\
&= \langle T,\,e^{it(x+a)}\varphi((x+a)-a)\rangle
= \langle T,\,e^{it(x+a)}\varphi(x)\rangle \\
&= e^{ita}\,\langle T,\,e^{itx}\varphi(x)\rangle
= e^{ita}\,{}^{(\varphi)}\phi(t).
\end{align*}
Taking logarithms, $K_a(t) = ita + {}^{(\varphi)}K(t)$.  Differentiating:
$K_a'(0) = ia + {}^{(\varphi)}K'(0)$, giving
${}^{(\varphi_a)}\kappa_1 = a + {}^{(\varphi)}\kappa_1$.  For $n\geq 2$,
the term $ita$ contributes zero to the $n$-th derivative, so
${}^{(\varphi_a)}\kappa_n = {}^{(\varphi)}\kappa_n$.
\end{proof}

\medskip
\noindent\textbf{Proof of Proposition~\ref{prop:scaling}}
(Scaling).

\begin{proof}
The scaled pair $(T_b,\varphi_b)$ is defined operationally by
$\E_{T_b,\varphi_b}[\psi] = \E_{T,\varphi}[\psi(b\,\cdot)]$.  Setting
$\psi(x)=e^{itx}$:
\[
{}^{(\varphi_b)}\phi_b(t)
= \E_{T_b,\varphi_b}[e^{itx}]
= \E_{T,\varphi}[e^{itbx}]
= {}^{(\varphi)}\phi(bt).
\]
Taking logarithms, $K_b(t) = {}^{(\varphi)}K(bt)$.  By the chain rule,
\[
\frac{d^n}{dt^n}K_b(t)\Big|_{t=0}
= b^n\,\frac{d^n}{ds^n}{}^{(\varphi)}K(s)\Big|_{s=0},
\]
so ${}^{(\varphi_b)}\kappa_n = b^n\,{}^{(\varphi)}\kappa_n$.
\end{proof}

\medskip
\noindent\textbf{Proof of Proposition~\ref{prop:wcf-smooth}}
(Smoothness of the weak characteristic function).

\begin{proof}
We must show that $t\mapsto {}^{(\varphi)}\phi(t) = \langle T,
e^{itx}\varphi(x)\rangle$ is infinitely differentiable in~$t$ and
that derivatives pass through the distributional pairing.

\emph{Step~1.  The map $t\mapsto g_t$ is smooth from $\R$ into
$\mathcal{S}(\R)$.}
Define $g_t(x) = e^{itx}\varphi(x)$.  For each fixed~$t$,
$g_t\in\mathcal{S}(\R)$ because $|e^{itx}|=1$ and $\varphi\in
\mathcal{S}(\R)$, and products with bounded smooth functions of
polynomial growth preserve the Schwartz class.  The formal $t$-derivative
is $\partial_t g_t(x) = ix\,e^{itx}\varphi(x)$, and more generally
$\partial_t^n g_t(x) = (ix)^n e^{itx}\varphi(x)$.  Since
$x^n\varphi(x)\in\mathcal{S}(\R)$ (the Schwartz space is closed
under polynomial multiplication), each $\partial_t^n g_t$ belongs
to $\mathcal{S}(\R)$.

\emph{Step~2.  Convergence of difference quotients in
$\mathcal{S}(\R)$.}
We verify that $h^{-1}(g_{t+h}-g_t)\to \partial_t g_t$ in the
Schwartz topology as $h\to 0$.  The difference is
\[
h^{-1}(g_{t+h}(x)-g_t(x))
= e^{itx}\varphi(x)\cdot\frac{e^{ihx}-1}{h}.
\]
By the mean value theorem,
$(e^{ihx}-1)/h = ix\,e^{i\theta hx}$ for some
$\theta\in(0,1)$ depending on $h$ and~$x$.  Thus
\[
h^{-1}(g_{t+h}(x)-g_t(x)) - ix\,e^{itx}\varphi(x)
= ix\,e^{itx}\varphi(x)(e^{i\theta hx}-1).
\]
For any Schwartz seminorm
$p_{\alpha,\beta}(f)=\sup_x|x^\alpha f^{(\beta)}(x)|$, the
Leibniz rule applied to $x$-derivatives of $ix\,e^{itx}\varphi(x)
(e^{i\theta hx}-1)$ produces finitely many terms, each containing
$|e^{i\theta hx}-1|\le |\theta hx|\le |hx|$ times a Schwartz
function of~$x$.  Since $|hx|\cdot|x^m\varphi^{(k)}(x)|\le
|h|\sup_x|x|^{m+1}|\varphi^{(k)}(x)|$ and $\varphi\in\mathcal{S}$,
every seminorm tends to zero as $h\to 0$.  The same argument applies
to higher-order difference quotients by induction.

\emph{Step~3.  Conclusion.}
Since $T\in\mathcal{S}'(\R)$ is a continuous linear functional on
$\mathcal{S}(\R)$, the composition $t\mapsto\langle T,g_t\rangle$
is smooth and
\[
\frac{d^n}{dt^n}\langle T,g_t\rangle
= \langle T,\partial_t^n g_t\rangle
= \langle T,(ix)^n e^{itx}\varphi(x)\rangle
= \E_{T,\varphi}[(ix)^n e^{itx}]. \qedhere
\]
\end{proof}

\medskip
\noindent\textbf{Proof of Corollary~\ref{cor:moments-from-cf}}
(Moments from derivatives at the origin).

\begin{proof}
Set $t=0$ in Proposition~\ref{prop:wcf-smooth}:
\[
({}^{(\varphi)}\phi)^{(n)}(0)
= \langle T,(ix)^n e^{i\cdot 0\cdot x}\varphi(x)\rangle
= i^n\langle T,x^n\varphi(x)\rangle
= i^n\,{}^{(\varphi)}m_n. \qedhere
\]
\end{proof}

\medskip
\noindent\textbf{Proof of Theorem~\ref{thm:weak-clt}}
(Weak Central Limit Theorem).

\begin{proof}
Let ${}^{(\varphi)}\phi$ denote the common weak characteristic function
of the i.i.d.\ generalised random variables $X_1,\ldots,X_n$ with
first two weak cumulants ${}^{(\varphi)}\kappa_1$ and
${}^{(\varphi)}\kappa_2 > 0$.

\emph{Step~1.  Weak CF of the normalised sum.}
By the factorisation property (Proposition~\ref{prop:cf-factorise})
applied to i.i.d.\ copies,
${}^{(\varphi)}\phi_{S_n}(t) = ({}^{(\varphi)}\phi(t))^n$.  The
normalisation $Z_n = (S_n - n{}^{(\varphi)}\kappa_1)/\sqrt{n{}^{(\varphi)}\kappa_2}$
corresponds, via translation and scaling
(Propositions~\ref{prop:translation} and~\ref{prop:scaling}), to the
logarithmic identity
\[
\log{}^{(\varphi)}\phi_{Z_n}(t)
= -it\frac{n{}^{(\varphi)}\kappa_1}{\sqrt{n{}^{(\varphi)}\kappa_2}}
+ n\log{}^{(\varphi)}\phi\!\left(\frac{t}{\sqrt{n{}^{(\varphi)}\kappa_2}}\right).
\]

\emph{Step~2.  Second-order expansion.}
Since ${}^{(\varphi)}\phi$ is smooth (Proposition~\ref{prop:wcf-smooth})
and ${}^{(\varphi)}\phi(0)=\langle T,\varphi\rangle\neq 0$, the function
$\log{}^{(\varphi)}\phi$ is smooth near the origin with
\[
\log{}^{(\varphi)}\phi(u)
= i{}^{(\varphi)}\kappa_1 u - \frac{{}^{(\varphi)}\kappa_2}{2}u^2 + r(u),
\qquad r(u)=o(u^2)\text{ as }u\to 0.
\]

\emph{Step~3.  Substitution and limit.}
Write $\mu={}^{(\varphi)}\kappa_1$ and $\sigma^2={}^{(\varphi)}\kappa_2$
for brevity and set $u=t/\sqrt{n\sigma^2}$:
\begin{align*}
\log{}^{(\varphi)}\phi_{Z_n}(t)
&= -\frac{it\sqrt{n}\,\mu}{\sigma}
+ n\!\left[
i\mu\frac{t}{\sqrt{n\sigma^2}}
-\frac{\sigma^2}{2}\frac{t^2}{n\sigma^2}
+ r\!\left(\frac{t}{\sqrt{n\sigma^2}}\right)
\right] \\
&= -\frac{it\sqrt{n}\,\mu}{\sigma}
+ \frac{it\sqrt{n}\,\mu}{\sigma}
- \frac{t^2}{2}
+ n\,r\!\left(\frac{t}{\sqrt{n\sigma^2}}\right) \\
&= -\frac{t^2}{2}
+ n\,r\!\left(\frac{t}{\sqrt{n\sigma^2}}\right).
\end{align*}
For the remainder: since $r(u)=o(u^2)$, for every $\varepsilon>0$
there exists $\delta>0$ such that $|r(u)|\le\varepsilon|u|^2$
whenever $|u|<\delta$.  For $n$ large enough,
$|t|/\sqrt{n\sigma^2}<\delta$, hence
\[
\left|n\,r\!\left(\frac{t}{\sqrt{n\sigma^2}}\right)\right|
\le n\,\varepsilon\,\frac{t^2}{n\sigma^2}
= \frac{\varepsilon\,t^2}{\sigma^2}
\;\xrightarrow{\varepsilon\to 0}\; 0.
\]
Therefore $\log{}^{(\varphi)}\phi_{Z_n}(t)\to -t^2/2$, and consequently
${}^{(\varphi)}\phi_{Z_n}(t)\to e^{-t^2/2}$ for every $t\in\R$.
\end{proof}

\section{Reconstruction from weak data}
\label{app:reconstruction}

\medskip
\noindent
We show that, in the classical density setting, the weak
representation admits a stable reconstruction via Tikhonov
regularisation. In the terms of this paper, the kernel $\varphi$ is the
\emph{observation instrument}: it yields the kernel-weighted data
$g=\varphi f$, and recovering the law amounts to \emph{inverting the
instrument}. This provides an explicit inverse procedure complementing
the uniqueness results of Section~\ref{sec:moment_problem}.

\medskip

Let $\varphi\in\mathcal{S}(\R)$ with $\varphi(x)>0$ for all $x$, and let
$f\in L^2(\R)$. In the classical case $T=T_f$, the weak expectation
determines the product
\[
g(x) := \varphi(x) f(x),
\]
either directly (through admissible test functions) or indirectly via the
weak characteristic function
\[
{}^{(\varphi)}\phi(t)
=
\int_{\R} e^{itx}\varphi(x)f(x)\,dx
=
\widehat{g}(t).
\]
Thus, reconstruction of $f$ from weak data reduces to the inversion of the
multiplication operator
\[
M_\varphi : L^2(\R) \to L^2(\R),
\qquad
(M_\varphi f)(x) = \varphi(x) f(x).
\]

Since $\varphi(x)\to 0$ as $|x|\to\infty$, the inverse problem
$M_\varphi f = g$ is ill-posed: small perturbations in $g$ can lead to large
errors in $f$ where $\varphi$ is small. A natural remedy is Tikhonov
regularisation.

\begin{theorem}[Tikhonov reconstruction from weak data]
\label{thm:tikhonov-reconstruction}
Let $\varphi\in\mathcal{S}(\R)$ satisfy $\varphi(x)>0$ for all $x$, and
let $M_\varphi$ be as above. Given data $g\in L^2(\R)$, define for
$\lambda>0$
\[
R_\lambda g
:=
(M_\varphi^* M_\varphi + \lambda I)^{-1} M_\varphi^* g.
\]
Then:

\begin{enumerate}
\item $R_\lambda g$ is the unique minimiser of the functional
\[
J_\lambda(h)
=
\|M_\varphi h - g\|_{L^2}^2
+
\lambda \|h\|_{L^2}^2.
\]

\item If $g = \varphi f$ with $f\in L^2(\R)$, then
\[
R_\lambda g \to f
\quad\text{in }L^2(\R)
\quad\text{as }\lambda\downarrow 0.
\]

\item If noisy data $g_\delta$ satisfy
\[
\|g_\delta - g\|_{L^2} \le \delta,
\]
then
\[
\|R_\lambda g_\delta - f\|_{L^2}
\le
\frac{\delta}{2\sqrt{\lambda}}
+
\left\|
\frac{\lambda}{\varphi^2 + \lambda} f
\right\|_{L^2}.
\]
In particular, if $\lambda=\lambda(\delta)\downarrow 0$ and
$\delta/\sqrt{\lambda(\delta)}\to 0$, then
\[
R_{\lambda(\delta)} g_\delta \to f
\quad\text{in }L^2(\R).
\]
\end{enumerate}
\end{theorem}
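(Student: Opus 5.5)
The plan is to exploit that $M_\varphi$ is a bounded, self-adjoint, nonnegative multiplication operator on $L^2(\R)$. Since $\varphi\in\mathcal{S}(\R)$ is real-valued and bounded, $\|M_\varphi\|\le\|\varphi\|_\infty$ and $M_\varphi^*=M_\varphi$. Hence $M_\varphi^*M_\varphi+\lambda I$ is multiplication by $\varphi^2+\lambda\ge\lambda>0$. It is therefore boundedly invertible with inverse equal to multiplication by $(\varphi^2+\lambda)^{-1}$, of norm at most $\lambda^{-1}$. This gives the explicit pointwise formula
\[
R_\lambda g=\frac{\varphi\,g}{\varphi^2+\lambda},
\]
and every part of the statement will be read off from it.

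For part~1, I will use the standard normal-equation argument. Expanding $J_\lambda(h+k)$ gives
\[
J_\lambda(h+k)=J_\lambda(h)+2\operatorname{Re}\langle (M_\varphi^2+\lambda)h-M_\varphi g,\,k\rangle+\|M_\varphi k\|^2+\lambda\|k\|^2 .
\]
At $h=R_\lambda g$ the linear term vanishes, so $J_\lambda(R_\lambda g+k)-J_\lambda(R_\lambda g)\ge\lambda\|k\|^2$. Strict convexity then gives existence and uniqueness of the minimiser at once.

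For part~2, substitute $g=\varphi f$ to obtain
\[
R_\lambda g-f=-\frac{\lambda}{\varphi^2+\lambda}f .
\]
Pointwise, $\lambda/(\varphi^2+\lambda)\to0$ because $\varphi(x)>0$ everywhere. The integrand is dominated by $|f|^2\in L^1$. Dominated convergence then yields $\|R_\lambda g-f\|_{L^2}\to0$. Positivity of $\varphi$ is used exactly here, since at a zero of $\varphi$ the bias factor would equal $1$.

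For part~3, split the error as $R_\lambda g_\delta-f=R_\lambda(g_\delta-g)+(R_\lambda g-f)$. The second term is the bias computed in part~2. For the first term, the operator norm of $R_\lambda$ is $\sup_{s\ge0}s/(s^2+\lambda)$. By AM--GM, $s^2+\lambda\ge2s\sqrt\lambda$, so this supremum equals $1/(2\sqrt\lambda)$, which gives the stated $\delta/(2\sqrt\lambda)$. The final convergence claim combines this with part~2 along $\lambda(\delta)\downarrow0$.

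No step is a real obstacle. The only points needing care are checking that $M_\varphi$ is self-adjoint (which uses real-valuedness, implicit in $\varphi>0$) and computing the sharp constant $1/(2\sqrt\lambda)$. The main analytic input is the dominated-convergence step in part~2, where strict positivity of $\varphi$ is essential.
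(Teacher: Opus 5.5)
Your proposal is correct and follows the paper's proof: reduce $R_\lambda$ to multiplication by $\varphi/(\varphi^2+\lambda)$, then get part~1 from the normal equation, part~2 by dominated convergence on the bias factor $\lambda/(\varphi^2+\lambda)$, and part~3 by splitting the error into noise and bias with the sup bound $1/(2\sqrt\lambda)$; your completing-the-square expansion in part~1 just makes the paper's appeal to strict convexity explicit. One small wording fix: the operator norm of $R_\lambda$ is $\sup_x \varphi(x)/(\varphi(x)^2+\lambda)$, which is \emph{at most} (and for large $\lambda$ strictly less than) $\sup_{s\ge0}s/(s^2+\lambda)=1/(2\sqrt\lambda)$, but only this upper bound is needed.
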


\begin{proof}
Since $M_\varphi$ is a bounded self-adjoint multiplication operator on
$L^2(\R)$, we have
\[
M_\varphi^* = M_\varphi,
\qquad
M_\varphi^* M_\varphi = M_{\varphi^2}.
\]
Hence
\[
M_\varphi^* M_\varphi + \lambda I
=
M_{\varphi^2 + \lambda},
\]
which is invertible for all $\lambda>0$. Its inverse is again a
multiplication operator:
\[
(M_{\varphi^2 + \lambda})^{-1}
=
M_{(\varphi^2 + \lambda)^{-1}}.
\]
Therefore,
\[
R_\lambda g
=
M_{(\varphi^2 + \lambda)^{-1}} M_\varphi g
=
\frac{\varphi}{\varphi^2 + \lambda}\, g.
\]

\medskip
\noindent
(1) The minimiser of $J_\lambda$ is characterised by the normal equation
\[
(M_\varphi^* M_\varphi + \lambda I)h = M_\varphi^* g,
\]
which yields the stated expression for $R_\lambda g$. Strict convexity of
$J_\lambda$ ensures uniqueness.

\medskip
\noindent
(2) If $g=\varphi f$, then
\[
R_\lambda g
=
\frac{\varphi^2}{\varphi^2 + \lambda} f,
\]
and hence
\[
R_\lambda g - f
=
-\frac{\lambda}{\varphi^2 + \lambda} f.
\]
Since $0 \le \frac{\lambda}{\varphi^2 + \lambda} \le 1$ and this factor
converges pointwise to $0$, dominated convergence yields
\[
\|R_\lambda g - f\|_{L^2} \to 0.
\]

\medskip
\noindent
(3) For noisy data $g_\delta$, write
\[
R_\lambda g_\delta - f
=
\frac{\varphi}{\varphi^2 + \lambda}(g_\delta - g)
-
\frac{\lambda}{\varphi^2 + \lambda} f.
\]
Taking norms,
\[
\|R_\lambda g_\delta - f\|_{L^2}
\le
\left\|
\frac{\varphi}{\varphi^2 + \lambda}
\right\|_\infty
\|g_\delta - g\|_{L^2}
+
\left\|
\frac{\lambda}{\varphi^2 + \lambda} f
\right\|_{L^2}.
\]
The function $x \mapsto \frac{x}{x^2 + \lambda}$ attains its maximum at
$x=\sqrt{\lambda}$ with value $1/(2\sqrt{\lambda})$, hence
\[
\left\|
\frac{\varphi}{\varphi^2 + \lambda}
\right\|_\infty
\le
\frac{1}{2\sqrt{\lambda}}.
\]
This yields the stated bound and the convergence condition.
\end{proof}

\begin{remark}
The reconstruction operator $R_\lambda$ regularises the ill-posed inversion
of $M_\varphi$ by suppressing regions where $\varphi$ is small. The
regularisation parameter $\lambda$ controls the trade-off between bias and
stability. This illustrates that the weak representation, while losing
pointwise information through multiplication by $\varphi$, retains enough
structure to allow stable recovery of the underlying density under suitable
regularisation.
\end{remark}

\begin{remark}[On the $L^2$ assumption]
Theorem~\ref{thm:tikhonov-reconstruction} assumes $f\in L^2(\R)$, which is
slightly stronger than the $L^1$ integrability assumed for densities
elsewhere in the paper.  This is a mild condition: since $\varphi$ is
bounded, every $L^2(\R)$ density qualifies, and virtually all standard
parametric families (Gaussian, Student~$t$, stable with index $\alpha > 1/2$,
generalised hyperbolic, etc.)\ have $L^2$ densities.  The $L^2$ setting is
natural for the Tikhonov framework because the normal equation
$(M_\varphi^*M_\varphi + \lambda I)h = M_\varphi^*g$ is formulated in
Hilbert space.
\end{remark}


\begin{thebibliography}{99}

\bibitem{Berg1988}
C.~Berg.
\newblock The cube of a normal distribution is indeterminate.
\newblock {\em Annals of Probability}, 16(2):910--913, 1988.

\bibitem{Billingsley1995}
P.~Billingsley (1995).
\textit{Probability and Measure}, 3rd ed.
Wiley.

\bibitem{deBranges1968}
L. de Branges (1968).
\textit{Hilbert Spaces of Entire Functions}.
Prentice-Hall.

\bibitem{Feller1971}
W.~Feller (1971).
\textit{An Introduction to Probability Theory and Its Applications}, Vol.~II, 2nd ed.
Wiley.

\bibitem{Gelfand1964}
I.~M.~Gel'fand and N.~Ya.~Vilenkin (1964).
\textit{Generalized Functions, Vol.~4: Applications of Harmonic Analysis}.
Academic Press.

\bibitem{Hormander1990}
L.~H\"ormander (1990).
\textit{The Analysis of Linear Partial Differential Operators~I},
2nd ed. Springer, Berlin.

\bibitem{LabouriauMixture}
R.~Labouriau (2023).
On the bias of the score function of finite mixture models.
\textit{Communications in Statistics -- Theory and Methods}, 52(13), 4461--4467.
DOI: 10.1080/03610926.2021.1995429.

\bibitem{LabouriauDeMoivre}
R.~Labouriau (2026).
From Coefficients to Distributions:
De~Moivre and the Operational View of Probability.
arXiv:2605.25227 [math.HO]

\bibitem{LabouriauPaperD}
R.~Labouriau (2026).
Inference  Functionals and Observation Operators for Distributional
Statistical Models.
arXiv:2605.19189 [math.ST].

\bibitem{LabouriauTransversality}
R.~Labouriau (2026).
Transversality and Geometric Regularisation in Distributional
Statistical Models.
arXiv:2605.04536 [math.ST]

\bibitem{LabouriauA2}
R.~Labouriau (2026).
Weak Moment Methods for Statistical Inference:
with an Application to Robust Estimation.
arXiv:2604.23619 [stat.ME]

\bibitem{LinKopanovStoyanov2020}
G.~D.~Lin, P.~Kopanov, and J.~Stoyanov.
\newblock New checkable conditions for moment determinacy of probability distributions.
\newblock {\em Theory of Probability \& Its Applications}, 65(4):634--648, 2020.

\bibitem{Lubinsky2007}
D. S. Lubinsky (2007).
A survey of weighted polynomial approximation with exponential weights.
\textit{Surveys in Approximation Theory}, 3, 1--105.

\bibitem{Lukacs1970}
E.~Lukacs (1970).
\textit{Characteristic Functions}, 2nd ed.
Griffin.

\bibitem{Peng2008}
S. Peng (2008).
A new central limit theorem under sublinear expectations.
\textit{arXiv preprint} arXiv:0803.2656.

\bibitem{reed-simon1980}
M.~Reed and B.~Simon (1980).
\textit{Methods of Modern Mathematical Physics, Vol.~I:
Functional Analysis}, revised and enlarged ed.
Academic Press.

\bibitem{Resnick2007}
S. I. Resnick (2007).
\textit{Heavy-Tail Phenomena: Probabilistic and Statistical Modeling}.
Springer.

\bibitem{Samorodnitsky1994}
G.~Samorodnitsky and M.~S.~Taqqu (1994).
\textit{Stable Non-Gaussian Random Processes}.
Chapman \& Hall.

\bibitem{Schwartz1950}
L.~Schwartz (1950--1951).
\textit{Th\'eorie des distributions}.
Hermann.


\bibitem{Shohat1943}
J.~A.~Shohat and J.~D.~Tamarkin (1943).
\textit{The Problem of Moments}.
American Mathematical Society.

\bibitem{Stoyanov2000Krein}
J.~Stoyanov.
\newblock Krein condition in probabilistic moment problems.
\newblock {\em Bernoulli}, 6(5):939--949, 2000.

\bibitem{Stoyanov2013}
J.~Stoyanov.
\newblock {\em Counterexamples in Probability}.
\newblock Dover Publications, Mineola, NY, third edition, 2013.

\bibitem{StoyanovInverardiTagliani2023}
J.~Stoyanov, P.~L.~Inverardi, and A.~Tagliani.
\newblock The problem of moments: a bunch of classical results with some novelties.
\newblock {\em Symmetry}, 15(9):1743, 2023.

\bibitem{Strichartz2003}
R.~S.~Strichartz (2003).
\textit{A Guide to Distribution Theory and Fourier Transforms}.
World Scientific.

\bibitem{van2000asymptotic}
A.~W.~van der Vaart (1998).
\textit{Asymptotic Statistics}.
Cambridge University Press.

\bibitem{Watanabe2009}
S.~Watanabe (2009).
\textit{Algebraic Geometry and Statistical Learning Theory}.
Cambridge University Press.

\end{thebibliography}
\end{document}